\numberwithin{equation}{section}
\newtheorem{theorem}{Theorem}[section]
\newtheorem{lemma}[theorem]{Lemma}
\newtheorem{rem}[theorem]{Remark}
\newtheorem{definition}[theorem]{Definition}
\renewcommand{\tilde}{\widetilde}
\newcommand{\cF}{{\ensuremath{\mathcal F}} }
\newcommand{\cP}{{\ensuremath{\mathcal P}} }
\newcommand{\cC}{{\ensuremath{\mathcal C}} }
\newcommand{\cG}{{\ensuremath{\mathcal G}} }
\newcommand{\bE}{{\ensuremath{\mathbf E}} }
\newcommand{\dd}{\,\text{\rm d}}             
\newcommand{\bbE}{{\ensuremath{\mathbb E}} }
\newcommand{\bbN}{{\ensuremath{\mathbb N}} }
\newcommand{\bbR}{{\ensuremath{\mathbb R}} }
\newcommand{\bbS}{{\ensuremath{\mathbb S}} }
\newcommand{\ga}{\alpha}
\newcommand{\gd}{\delta}
\newcommand{\gep}{\varepsilon}       
\newcommand{\gl}{\lambda}
\newcommand{\gs}{\sigma}
\def\captionfont@{\footnotesize}
\def\captionheadfont@{\scshape}
\long\def\@makecaption#1#2{%
  \vspace{2mm}
  \setbox\@tempboxa\vbox{\color@setgroup
    \advance\hsize-6pc\noindent
    \captionfont@\captionheadfont@#1\@xp\@ifnotempty\@xp
        {\@cdr#2\@nil}{.\captionfont@\upshape\enspace#2}%
    \unskip\kern-6pc\par
    \global\setbox\@ne\lastbox\color@endgroup}%
  \ifhbox\@ne 
    \setbox\@ne\hbox{\unhbox\@ne\unskip\unskip\unpenalty\unkern}%
  \fi
  \ifdim\wd\@tempboxa=\z@ 
    \setbox\@ne\hbox to\columnwidth{\hss\kern-6pc\box\@ne\hss}%
  \else 
    \setbox\@ne\vbox{\unvbox\@tempboxa\parskip\z@skip
        \noindent\unhbox\@ne\advance\hsize-6pc\par}%
\fi
  \ifnum\@tempcnta<64 
    \addvspace\abovecaptionskip
    \moveright 3pc\box\@ne
  \else 
    \moveright 3pc\box\@ne
    \nobreak
    \vskip\belowcaptionskip
  \fi
\relax
}
\def\writefig#1 #2 #3 {\rlap{\kern #1 truecm
\raise #2 truecm \hbox{#3}}}
\newcommand{\dist}{\text{\rm dist}}
\newcommand{\Var}{\mathbf{Var}}
\newcommand{\Cov}{\mathbf{Cov}}
\newcommand{\proj}{\text{\rm proj}}
\title[Periodicity in the kinetic mean-field FitzHugh-Nagumo model]
{Periodicity induced by noise and interaction in the kinetic mean-field FitzHugh-Nagumo model}
\author{Eric Lu\c{c}on}
\address{Laboratoire MAP5 (UMR CNRS 8145), Universit\'e Paris Descartes, Sorbonne Paris Cit\'e, 75270 Paris, France, \url{eric.lucon@paridescartes.fr}.
}
\author{Christophe Poquet}
\address{Univ Lyon, Université Claude Bernard Lyon 1, CNRS UMR 5208, Institut Camille Jordan, F-69622 Villeurbanne, France, \url{poquet@math.univ-lyon1.fr}}
\keywords{FitzHugh-Nagumo model, McKean-Vlasov process, nonlinear Fokker-Planck equation, mean-field systems, excitable systems, slow-fast dynamics, noise-induced dynamics, Wasserstein distance}
\subjclass[2010]{60K35, 35K55, 35Q84, 37N25, 82C26, 82C31, 92B20}
\date{\today}
\begin{document}

\begin{abstract}
We consider the long-time behavior of a population of mean-field oscillators modeling the activity of interacting excitable neurons in large population. Each neuron is represented by its voltage and recovery variables, which are solution to a FitzHugh-Nagumo system, and interacts with the rest of the population through a mean-field linear coupling, in the presence of noise. The aim of the paper is to study the emergence of collective oscillatory behaviors induced by noise and interaction on such a system. The main difficulty of the present analysis is that we consider the kinetic case, where interaction and noise are only imposed on the voltage variable. We prove the existence of a stable cycle for the infinite population system, in a regime where the local dynamics is small. 
\end{abstract}

\maketitle

\section{Introduction}

\subsection{A mean-field model of interacting FitzHugh-Nagumo neurons}
We are interested in this paper in the behavior as $t\to\infty$ of the following McKean-Vlasov process
\begin{equation}\label{eq:kinetic FhN}
\left\{\begin{array}{l}
\dd X_t = \gd \left( X_t-\frac{X^3_t}{3} -Y_t\right)\dd t - K\left(X_t-\bE[X_t]\right)\dd t+\sqrt{2} \gs \dd B_t\\
\dd Y_t = \frac{\gd}{c}\left(X_t+a-bY_t\right)\dd t
\end{array}
\right. ,\ t\geq0,
\end{equation}
where $(X_{ t}, Y_{ t})\in \mathbb{ R}^{ 2}$, $B_{ t}$ is a standard Brownian motion on $ \mathbb{ R}$, $a\in \mathbb{ R}$ and $b,c, \delta, K, \sigma$ are positive parameters. The evolution \eqref{eq:kinetic FhN} is a prototype of a nonlinear stochastic differential equation (the nonlinearity coming from the fact that $X_{ t}$ interacts with its own law through its expectation $ \mathbf{ E} \left[X_{ t}\right]$). It is named kinetic by 
analogy to the classical kinetic interacting particle systems, noise and interactions are only applied on the "momentum" $X_t$, and not on the "position" $Y_t$. 

The system \eqref{eq:kinetic FhN} is the natural macroscopic limit (as $n\to\infty$) of the following system of coupled mean-field diffusions $(X_{ i, t}, Y_{ i, t})$, $i=1, \ldots, n$, $n\geq1$
\begin{equation}
\label{eq:particle_syst}
\left\{\begin{array}{l}
\dd X_{ i, t} = \gd \left( X_{ i, t}-\frac{X^3_{ i, t}}{3} -Y_{ i, t}\right)\dd t - K\left(X_{ i, t}- \frac{ 1}{ n} \sum_{ j=1}^{ n}X_{ j, t}\right)\dd t+\sqrt{2} \gs \dd B_{ i,t}\\
\dd Y_{ i, t} = \frac{\gd}{c}\left(X_{ i, t}+a-bY_{ i, t}\right)\dd t
\end{array}
\right. ,\ t\geq0\ ,
\end{equation}
where $(B_{ 1}, \ldots, B_{ n})$ are i.i.d. standard Brownian motions. The motivation comes from neuroscience: \eqref{eq:particle_syst} models the evolution of $n$ neurons of FitzHugh-Nagumo type, each represented by its voltage $X_{ i}$ and recovery variable $Y_{ i}$, that are coupled through a linear mean-field interaction (this corresponds to a coupling via electrical synapses, see \cite{MR3392551}). 
\subsection{Emergence of collective structured dynamics for excitable systems}
In \eqref{eq:particle_syst}, the intrinsic dynamics of each neuron is of FitzHugh-Nagumo type  \cite{FitzHugh1961,MR1779040,22657695}: when $K= \sigma=0$, $ \delta=1$, the system \eqref{eq:particle_syst} reduces to a collection of copies of the isolated system
\begin{equation}\label{eq:isolated FhN}
{\rm d}(X_{ t}, Y_{ t})= F(X_{ t}, Y_{ t}) {\rm d}t\ , 
\end{equation}
where $F$ is given by 
\begin{equation}
\label{eq:FHN}
F(x, y):= \begin{pmatrix}
f(x, y) \\ g(x, y)
\end{pmatrix}:= \begin{pmatrix}
x-\frac{x^3}{3} - y\\ \frac{1}{c}\left(x+a-by\right)
\end{pmatrix},\ x, y\in \mathbb{ R}.
\end{equation}
Although the transitions in the FitzHugh-Nagumo model are complex in general (see \cite{MR1779040}), two main dynamical patterns emerge for \eqref{eq:isolated FhN}: a resting state (corresponding to a unique stable point for \eqref{eq:isolated FhN}) and a spiking regime (corresponding to a limit cycle for \eqref{eq:isolated FhN}, see \cite{MR1779040} and Figure~\ref{fig:F} below). In this sense, the system \eqref{eq:isolated FhN} is a prototype of an excitable dynamics \cite{LINDNER2004321}: it is possible to choose appropriately the parameters $a,b,c$ (and we will do so in the following) so that the unperturbed system \eqref{eq:isolated FhN} is in a resting state but such that the addition of a small perturbation makes the system fall into a oscillatory regime (spiking activity).
\begin{figure}[h]
\centering
\includegraphics[width=\textwidth]{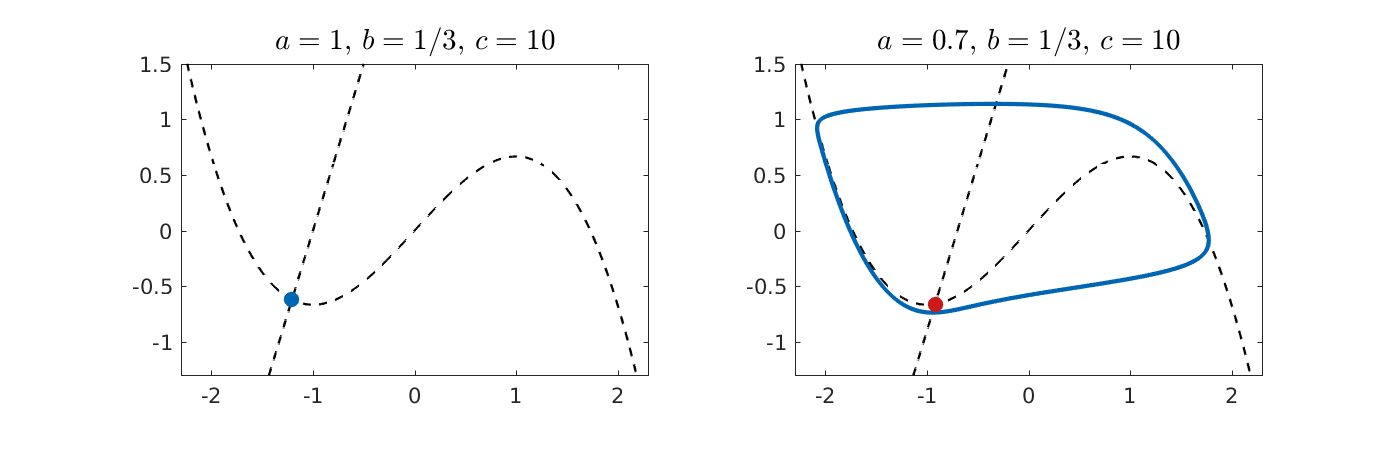}
\caption{Phase diagrams for the system \eqref{eq:isolated FhN} for two choices of parameters $a$, $b$ and $c$ (the voltage $X$ is represented along the $x$-axis and the recovery variable $Y$ on the $y$-axis). Stable (resp. unstable) points and limit cycles are represented in blue (resp. red). The nullclines of the FitzHugh-Nagumo system \eqref{eq:isolated FhN} are represented in dashed lines.}
\label{fig:F}
\end{figure}

Our aim is to analyse the joint influence of noise and interaction on the emergence of collective periodic behaviors for such a system of coupled excitable units. The point of the paper is to address this issue at the level of \eqref{eq:kinetic FhN}, that is for an infinite population $n=\infty$.  Note that this problem can be equivalently considered at the level of the Fokker-Planck PDE associated to \eqref{eq:kinetic FhN}: the law $ \mu_{ t}= \mathcal{ L}(X_{ t}, Y_{ t})$ of the McKean-Vlasov process \eqref{eq:kinetic FhN} is a weak solution to the following nonlinear kinetic Fokker-Planck PDE
\begin{multline}\label{eq:PDER kinetic FhN}
\partial_t \mu_t=\gs^2 \partial^2_{x^2}\mu_t-\partial_x\left(\left[\gd\left(x-\frac{x^3}{3}-y\right)-K\left(x-\int_{\bbR^2}z_{ 1}\mu_t( {\rm d}z_{ 1}, {\rm d}z_{ 2}) \right) \right]\mu_t \right)\\-\frac{\gd}{c} \partial_y\left((x+a-by)\mu_t\right),\ t\geq0\ ,
\end{multline}
whose solution $ t \mapsto \mu_{ t}({\rm d}x, {\rm d}y)$ takes its values in the set of probability measures on $ \mathbb{ R}^{ 2}$. Equivalently the unique solution $ \mu$ to \eqref{eq:PDER kinetic FhN} such that $ \mu_{ \vert_{ t=0}}= \mu_{ 0}$ is the law of the process \eqref{eq:kinetic FhN} with $ \mu_{ 0}= \mathcal{ L}(X_{ 0})$. Well-posedness results concerning both \eqref{eq:PDER kinetic FhN} and \eqref{eq:kinetic FhN} (in appropriate $L^{ 2}$-spaces with exponential weights) are addressed in \cite{Mischler2016,Lucon:2018qy}.
\begin{rem}
\label{rem:parameters}
Observe here that we have in \eqref{eq:kinetic FhN}, \eqref{eq:particle_syst} and \eqref{eq:PDER kinetic FhN} an interplay between three parameters: the strength of interaction $K>0$, the intensity of noise $ \sigma>0$ and the scaling parameter $ \delta>0$ of the local dynamics. In fact, a simple time change in \eqref{eq:kinetic FhN} shows that only two of these parameters are really relevant: we analyse below the long-time dynamics of \eqref{eq:kinetic FhN} in terms of $ \delta$ and $ \frac{ \sigma^{ 2}}{ K}$.
\end{rem}

Our aim is to analyse the emergence of synchronicity in the mean-field system \eqref{eq:kinetic FhN} (or equivalently \eqref{eq:PDER kinetic FhN}) under the joint influence of noise and interaction: the main result of the paper (see Theorem~\ref{th:main} below) concerns the existence of a stable invariant cycle for \eqref{eq:PDER kinetic FhN} in a regime where the interaction and noise are nontrivial and large w.r.t. to the intrinsic dynamics :
\begin{equation}
\label{eq:regime_param}
0<\frac{ \sigma^{ 2}}{K}<\infty \text{ and } \delta\ll 1.
\end{equation}
The first condition of \eqref{eq:regime_param} is a natural regime for the emergence of collective oscillations for \eqref{eq:kinetic FhN}: informally, when $K=0$ (that is in absence of interaction) for every isolated units \eqref{eq:isolated FhN} in an excitable state (first frame of Figure~\ref{fig:F}), the addition of noise alone make them leave the resting state (leading to a variety of uncorrelated stochastic dynamical patterns, e.g. canard-type excursion, mixed-mode oscillations, etc., see \cite{LINDNER2004321,MR2197663} for further references). It is only when one adds further some nontrivial interaction $K>0$ that these excursions may happen collectively, leading in the $n\to\infty$ limit to global oscillations of the system. Note that our main result is sufficiently versatile to track carefully how much noise (for a given $K$) one has to put in the system in order to see oscillations: we refer to Section~\ref{sec:emergence_intro} where we describe bifurcations of \eqref{eq:kinetic FhN} in terms of $ \sigma$ (from $ \sigma=0$ to $ \sigma\to\infty$).

Different asymptotics have been considered in previous works for the same model. We refer to Section~\ref{sec:comments} below for more details.

\subsection{Slow-fast dynamics approach}
\label{sec:slow-fast approach}
The second hypothesis $\delta\ll 1$ in \eqref{eq:regime_param} comes from the fact that our approach relies on a perturbative slow-fast analysis. We prove namely that when $\gd \ll 1$, the system \eqref{eq:kinetic FhN} admits solutions that are close to having Gaussian distributions. We present here the heuristic of this reduction, which will be made rigorous in the next sections.

For the variable $X_t$ in \eqref{eq:kinetic FhN}, when $\gd$ is small (and $K$ and $\gs$ remain of order $1$), the interaction and noise terms constitute a fast part of the dynamics, while the intrinsic dynamics term $ \delta F$ constitutes a slow one. When $ \delta=0$, the fast dynamics part of \eqref{eq:kinetic FhN} simply reduces an Ornstein Ulhenbeck process (of constant expectation) with Gaussian invariant measure. Hence, when $ \delta$ is now positive but small, it is natural to approximate at first order in $ \delta$, the distribution of $X_t$ by a Gaussian distribution $\mathcal{N}( \mathbf{ E}[X_t],\gs^2/K)$, where $\bE[X_t]$ evolves slowly in time.

Now if $X_t$ is at first order Gaussian, so is $Y_t$ since its dynamics is linear. So, at first order in $ \delta$, $(X_t,Y_t)$ should have a Gaussian distribution $\mathcal{N}(m_t,\Gamma_\gd)$, where $\Gamma_\gd$ is a symmetric covariance matrix, and $m_t=(\bE[X_t],\bE[Y_t])$ satisfies
\begin{equation}
\dot m_t = \gd\left(
\begin{array}{c}
\bE[X_t]-\frac{\bE[X_t^3]}{3}-\bE[Y_t]\\
\frac{1}{c}\left( \bE[X_t]+a-b\bE[Y_t]\right)
\end{array}
\right).
\end{equation}
Considering that $X_t\approx \mathcal{ N}(x_{ t}, \sigma^{ 2}/K)$, we obtain $\bE[X_t^3]\approx \bE[X_t]^3 +3\frac{\gs^2}{K}$, which leads to the approximation
\begin{equation}\label{eq:approx dym mt}
\dot m_t \approx \gd F_{\frac{\gs^2}{K}}(m_t),
\end{equation}
with
\begin{equation}
\label{eq:FHN_u}
F_{ u}(x, y):= \begin{pmatrix}
f_{ u}(x, y) \\ g(x, y)
\end{pmatrix}:= \begin{pmatrix}
(1-u)x-\frac{x^3}{3} - y\\ \frac{1}{c}\left(x+a-by\right)
\end{pmatrix},\ x, y\in \mathbb{ R}.
\end{equation}
Note that \eqref{eq:FHN_u} is once again of FitzHugh-Nagumo type, only modified by the prefactor $(1-u)$ in front of the $x$ variable. 

To compute $\Gamma_\gd$, we denote $Z_t=(Z^x_t,Z^y_t)$ a first order approximation of the centered process $(X_t-\bE[X_t], Y_t-\bE[Y_t])$, defined by the following system of equations:
\begin{equation}
\label{eq: def Z}
\left\{\begin{array}{l}
\dd Z^x_t = - K Z^x_t \dd t+\sqrt{2} \gs \dd B_t\\
\dd Z^y_t = \frac{\gd}{c}\left(Z^x_t-b Z^y_t\right)\dd t
\end{array}
\right. .
\end{equation}
Remark that due to the fact that this dynamics is linear, if the distribution of $Z_0$ is a Gaussian, then the distribution of $Z_t$ remains Gaussian, and straightforward calculations lead to
\begin{align}
\frac{\dd}{\dd t} \Var[Z^y_t]&=\frac{2\gd}{c}\Cov[Z^x_t,Z^y_t] -\frac{2 b \gd}{c}\Var[Z^y_t],\\
\frac{\dd}{\dd t} \Cov[Z^x_t,Z^y_t]&=\frac{\gd}{c} \Var[Z^x_t]-\left(K+\frac{b\gd}{c}\right)\Cov[Z^x_t,Z^y_t].
\end{align}
The equilibrium solution of this system of equations is given by $\Var[Z^x_t]=\frac{\gs^2}{K}$, $\Var[Z^y_t]=\frac{\gs^2}{K}\frac{\gd}{b(Kc+b\gd)}$ and $\Cov[Z^x_t,Z^y_t]=\frac{\gs^2}{K}\frac{\gd}{Kc+b\gd}$, and we thus define
\begin{equation}
\Gamma_\gd= \frac{\gs^2}{K}\left(\begin{array}{cc}
1&\frac{\gd}{Kc+b\gd}\\\frac{\gd}{Kc+b\gd} &\frac{\gd}{b(Kc+b\gd)}
\end{array}\right).
\end{equation}

\begin{rem} A similar slow-fast analysis was made in \cite{Lucon:2018qy} for the elliptic case (that is with nontrivial interaction $K_{ 2}>0$ and noise $ \sigma_{ 2}>0$ on the $Y$-variable)
\begin{equation}\label{eq:elliptic FhN}
\left\{\begin{array}{l}
\dd X_t= \delta f(X_{ t}, Y_{ t})\dd t - K_{ 1}\left(X_t-\bE[X_t]\right)\dd t+\sqrt{2} \gs_{ 1} \dd B_{ 1, t}\\
\dd Y_t= \delta g(X_{ t}, Y_{ t})\dd t- K_{ 2}\left(Y_t-\bE[Y_t]\right)\dd t+\sqrt{2} \gs_{ 2} \dd B_{ 2, t}
\end{array}
\right. ,
\end{equation}
where $K_{ 1}, K_{ 2}, \sigma_{ 1}, \sigma_{ 2}>0$ and $B_{1,t}$ $B_{2,t}$ independent. For this elliptic case both $X_t$ and $Y_t$ have fast terms in their dynamics, and thus the distribution of $(X_t,Y_t)$ is at first order a Gaussian $\mathcal{N}(m_t, \Gamma)$, with $\Gamma=\left(\begin{array}{cc}\frac{\gs_1^2}{K_1} & 0 \\ 0 & \frac{\gs_2^2}{K_2}\end{array} \right)$ and $\dot m_t \approx F_{\frac{\gs_1^2}{K_1}}(m_t)$, with the same function $F_u$, defined by \eqref{eq:FHN_u}. In particular, the fact that $F_{\frac{\gs_1^2}{K_1}}$ does not depend on $(K_{ 2}, \sigma_{ 2})$ in the elliptic case is again a strong argument in favor of the validity of the approximation \eqref{eq:approx dym mt} in the kinetic case. However, it is not possible to make $K_{ 2}, \sigma_{ 2}\to 0$ in the arguments of \cite{Lucon:2018qy}, as they rely strongly on the non-degeneracy of the noise on both variables. Secondly, contrary to \cite{Lucon:2018qy}, where the slow-fast reduction is based on the geometric properties of the PDE \eqref{eq:PDER kinetic FhN} (in a suitable $L^{ 2}$-space with exponential weights), we focus mostly here on the properties of the system \eqref{eq:kinetic FhN}, relying on Wasserstein type estimates.
\end{rem}

\subsection{Collective oscillations in the FitzHugh-Nagumo model under noise and interaction}
\label{sec:emergence_intro}
Supposing that the analysis made above is rigorous, i.e. that the coupled mean-field system \eqref{eq:kinetic FhN} admits solutions $(X_t,Y_t)$ that are at first order Gaussian distribution $\mathcal{N}(m_t,\Gamma_\gd)$, then (at least at first order) the analysis of  \eqref{eq:kinetic FhN} can be reduced to the analysis the dynamics of its expectation $m_t$, for which we provided the first order approximation \eqref{eq:approx dym mt}. Since \eqref{eq:approx dym mt} is nothing else than another FitzHugh-Nagumo system (slowed-down by a factor $ \delta$), everything reads now in terms of the bifurcations of the system (recall \eqref{eq:FHN_u})
\begin{equation}
\label{eq:Fu}
\left(\begin{array}{c}\dot x_{t}\\ \dot y_{ t}\end{array} \right) = F_{ u}(x_{ t}, y_{ t})
\end{equation} as $u$ increases from $u=0$ (isolated system \eqref{eq:isolated FhN}) to $ u= \frac{ \sigma^{ 2}}{ K}>0$ (coupled system with noise and interaction). 
In particular, as we will see below, a crucial observation is that it is possible to choose carefully the parameters $a, b, c$ so that \eqref{eq:Fu} has a unique  stable state for $u=0$, whereas the same system exhibits oscillations for values of $u$ chosen in a bounded interval (see Figure~\ref{fig:Fu} below): this is the signature of the emergence of periodic dynamics due to noise and interaction in \eqref{eq:kinetic FhN}.

The analysis of the bifurcations of the system \eqref{eq:Fu} was already made in \cite{Lucon:2018qy}, Section~3, since it is also the dynamics obtained by slow-fast reduction of the elliptic case. We present this analysis here, for the sake of completeness. With no loss of generality (Remark~\ref{rem:parameters}), we can suppose $K=1$. It is then possible to read from Figure~\ref{fig:Fu} the behavior of the system \eqref{eq:kinetic FhN} as the noise intensity $ u= \sigma^{ 2}$ increases: in absence of noise ($u=0$), each neuron has a fixed-point dynamics. As $u$ increases, we observe the emergence (through a saddle-node bifurcation of cycles) of a stable cycle coexisting with an unstable cycle and stable fixed-point (see the case $u=0.086$) for \eqref{eq:Fu}. As $u$ increases further, the unstable cycle collides with the stable fixed-point, resulting in only one stable limit cycle coexisting with an unstable fixed-point (see the case $u=0.2$). For large noise, oscillations disappear (see $u=0.8$). We refer to \cite{Lucon:2018qy}, Section~3.4 for more details on these transitions, in terms of Hopf bifurcations, and also for a study of the bistable case.

\begin{figure}[h]
\centering
\includegraphics[width=\textwidth]{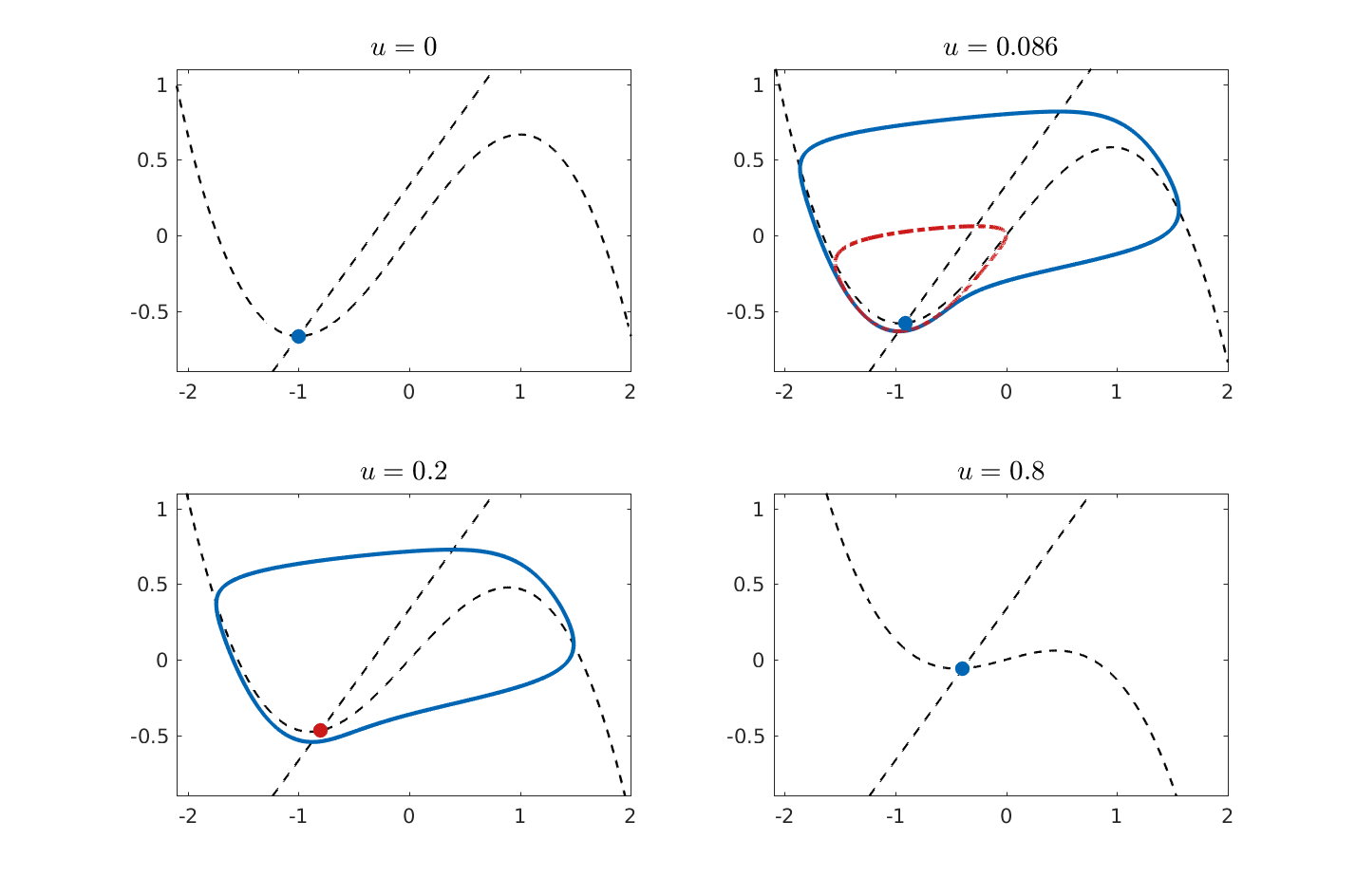}
\caption{Phase diagrams for the system \eqref{eq:Fu} for parameters $a= \frac{ 1}{ 3}$, $b=1$, $c= 10$ for different choices of $u$. Stable (resp. unstable) points and limit cycles are represented in blue (resp. red). The nullclines of the FitzHugh-Nagumo system \eqref{eq:Fu} are represented in dashed lines.}
\label{fig:Fu}
\end{figure}

As an illustration of the accuracy of this analysis for the particle system \eqref{eq:particle_syst}, we reproduce in Figure~\ref{fig:particle system} the dynamics of the empirical measure of \eqref{eq:particle_syst}, for $\gd=0.2$, $a= \frac{ 1}{ 3}$, $b=1$, $c= 10$, $K=1$, $ \sigma^2=0.2$ (which corresponds to $u=0.2$ in Figure~\ref{fig:Fu}) and $n=10^{ 5}$: the mean-value of the empirical density follows precisely the limit cycle given by \eqref{eq:Fu} in the case $u=0.2$. One notable difference between the present simulation in the kinetic case and the simulations in \cite{Lucon:2018qy} in the elliptic case (see \cite{Lucon:2018qy}, Figure~7) is the shape of the empirical density in both cases: due to the absence of noise on the $y$-coordinate, the variance along the $y$-direction is here significantly smaller than in the elliptic case.

\begin{figure}[h]
\centering
\includegraphics[width=\textwidth]{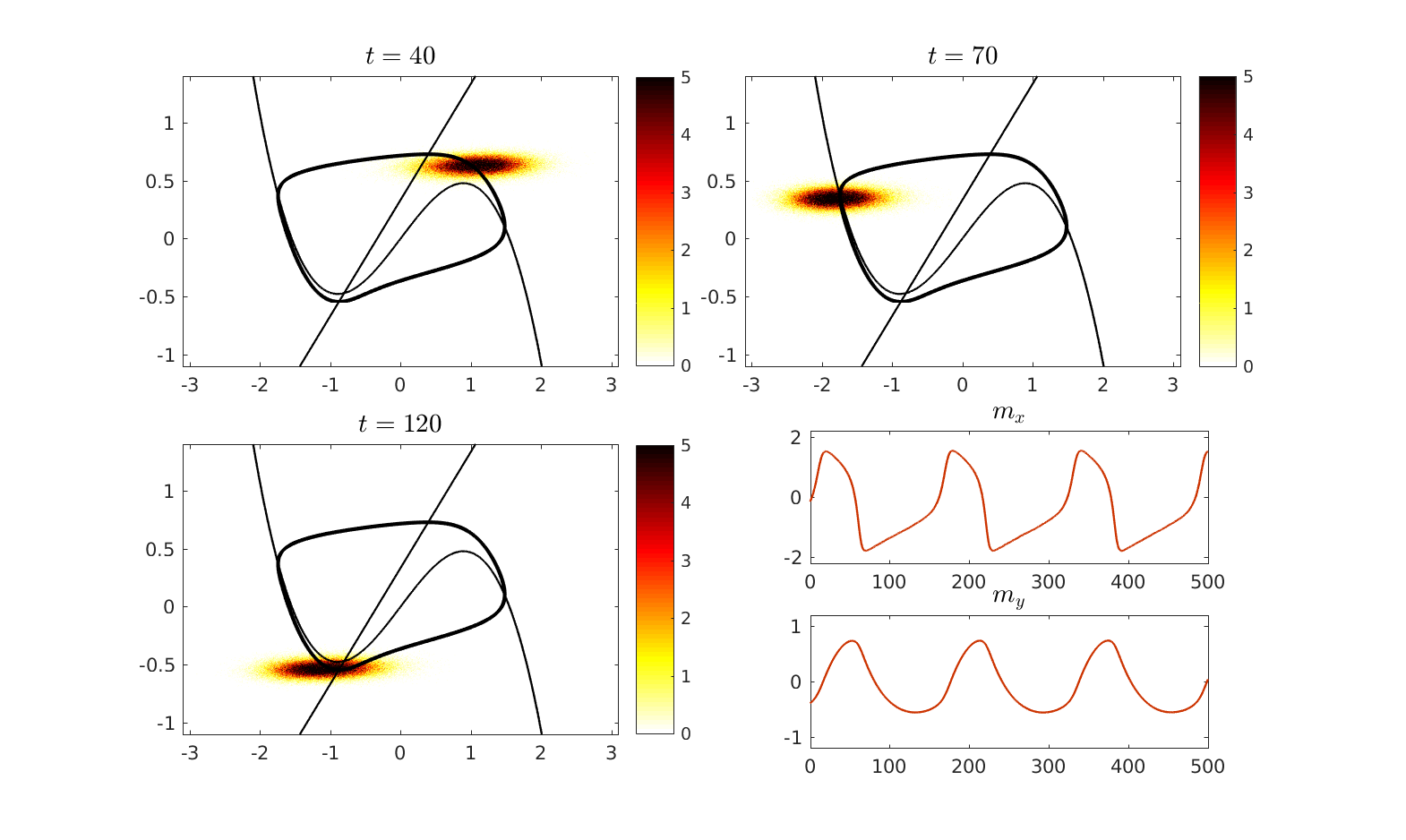}
\caption{Time evolution of the empirical density of the particle system \eqref{eq:particle_syst}, in the case $\gd=0.2$, $a= \frac{ 1}{ 3}$, $b=1$, $c= 10$, $K=1$, $ \sigma^2=0.2$ and $n=100000$. The corresponding evolution of the empirical mean-value is represented.}
\label{fig:particle system}
\end{figure}

\subsection{Main results}
In the remaining of the paper, we fix the parameters $(a,b,c, \sigma, K)$ such that the system
\begin{equation}\label{eq:FhN gs2/K}
\frac{1}{\gd} \left(
\begin{array}{c}
\dot x_t\\
\dot y_t
\end{array}
\right)= F_{\frac{\gs^2}{K}}(x_t,y_t),
\end{equation}
admits a limit cycle (recall that this is possible even if it is not the case for the system \eqref{eq:isolated FhN}, see Section~\ref{sec:emergence_intro} and \cite{Lucon:2018qy}, Section~3.4). 
\begin{rem}
\label{rem:gamma_delta}
Note that the existence of such limit cycle does not depend on $ \delta$: if, in the case $ \delta=1$, we denote this stable periodic solution by $(\gamma_{t})_{t\in [0,T_\gamma)}$ ($T_\gamma$ being the period of the limit cycle for $\gd=1$), the behavior of \eqref{eq:FhN gs2/K} for general $ \delta>0$ may be deduced by a simple time change: the corresponding periodic orbit becomes $ \left( \gamma_{t}^{ \delta}\right):=(\gamma_{\gd t})_{t\in [0,T_\gamma/\gd )}$, with period $ \frac{ T_{  \gamma}}{ \delta}$. When no confusion is possible, we will drop the superscript $ \delta$ in $ \gamma^{ \delta}$ in the following. We adopt also the definition
\begin{equation}
\mathbb{ S}_{ \delta}:= \mathbb{ R}/\big((T_{ \gamma}/ \delta) \mathbb{ Z}\big)
\end{equation}
endowed with the quotient topology induced by the euclidean distance on $ \mathbb{ R}$. Such topology can be generated by the following metric: for all $ \bar \varphi \equiv \varphi \left[ \frac{ T_{ \gamma}}{ \delta}\right]$, $ \bar \psi \equiv \psi \left[ \frac{ T_{ \gamma}}{ \delta}\right]$, 
\begin{equation}
d_{ \mathbb{ S}}( \bar \varphi, \bar \psi):= \max \left( \left\vert \varphi - \psi \right\vert, \frac{ T_{ \gamma}}{ \delta} - \left\vert \varphi - \psi\right\vert\right)\, .
\end{equation}
\end{rem}

We denote $q_m$  the distribution given by
\begin{equation}
q_m(z)= q_{ m}^{ \delta}(z):=\frac{1}{2\pi \det(\Gamma_{ \delta})}\exp\left(\frac12 (z-m)\cdot \Gamma_\gd^{-1} (z-m)\right), \quad z\in \bbR^2,
\end{equation}
and
\begin{equation}
\label{eq:G_delta}
\cG_\gd:=\left\{q^\gd_{\gamma^\gd_t}:\, t\geq 0\right\}.
\end{equation}
Below, $W_{ 2}$ is a distance of Wasserstein-type that is precisely defined in Definition~\ref{def:wasserstein}. The main result of the paper is the following:
\begin{theorem}\label{th:main}
Choose parameters $a$, $b$, $c$, $K$ and $\gs^2$ such that \eqref{eq:FhN gs2/K} admits an stable limit cycle. Then there exists a $\gd_c>0$ such that for all $\gd\leq \gd_c$ there exists a periodic solution $\nu^\text{per}_t$ to \eqref{eq:PDER kinetic FhN}, defining an invariant cycle $\cC_\gd$ which satisfies $\sup_t\dist_{W_2}(\nu^{\text{per}}_t,\cG_\gd)=O(\gd)$.

Moreover there exist positive constants $C_1$ and $C_2$ that do not depend on $\gd$, a positive constant $C(\gd)$ and a positive rate $\gl(\gd)$ such that if $\mu_0\in \cP_2$ satisfies
\begin{equation}
\int_{\bbR^2} z^6 \dd \mu_0(z)\leq C_1, \qquad \text{and} \qquad \dist_{W_2}(\mu_0,\cC_{\gd})\leq C_2\gd,
\end{equation}
then for $t\mapsto\mu_t$ the solution to \eqref{eq:PDER kinetic FhN} with initial condition $\mu_0$ we have:
\begin{equation}
\dist_{W_2}(\mu_t,\cC_\gd)\leq C(\gd) e^{-\gl(\gd) t} \dist_{W_2}(\mu_0,\cC_\gd).
\end{equation}
\end{theorem}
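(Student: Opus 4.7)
The plan is to combine a Wasserstein-based coupling argument for the McKean-Vlasov dynamics \eqref{eq:kinetic FhN} with the slow-fast reduction to Gaussians outlined in Section~\ref{sec:slow-fast approach}. Since the noise is degenerate (acting only on $X$), the analysis must decouple the fast OU-type relaxation of the $X$-fluctuations (at rate $K$) from the slow drift of the mean $m_{t}:=\bE[(X_{t},Y_{t})]$ (at rate $\gd$). The key structural fact is that the mean-field term $-K(X_{t}-\bE X_{t})$ cancels in the evolution of $\bE X_{t}$, so $m_{t}$ satisfies $\dot m_{t}=\gd\,\bE[F(X_{t},Y_{t})]$; once one knows the law of $(X_{t},Y_{t})$ is close to the Gaussian $\mathcal{N}(m_{t},\Gamma_{\gd})$, this reduces up to $O(\gd)$ to $\dot m_{t}=\gd F_{\gs^{2}/K}(m_{t})$, whose stable limit cycle $\gga^{\gd}$ is the assumed object.

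\textbf{Step 1: a priori moment estimates.} Apply It\^o's formula to $\vert X\vert^{p}+\vert Y\vert^{p}$ for $p\le 6$ along \eqref{eq:kinetic FhN}. The cubic confinement $-X^{3}/3$ produces an absorbing term that, together with Young's inequality to handle the quadratic mean-field coupling, yields a uniform-in-time bound $\int z^{6}\dd\mu_{t}(z)\le C$ under the hypothesis $\int z^{6}\dd\mu_{0}\le C_{1}$. These bounds control the cubic drift as a bounded perturbation in all subsequent estimates.

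\textbf{Step 2: synchronous coupling and Wasserstein contraction.} Couple two solutions $\mu_{t}$ and $\tilde\mu_{t}$ of \eqref{eq:PDER kinetic FhN} with the same Brownian motion, and differentiate the weighted quadratic functional $\bE\bigl[(X_{t}-\tilde X_{t})^{2}+\gk(Y_{t}-\tilde Y_{t})^{2}\bigr]$ for a suitable $\gk>0$. The term $-K(X-\bE X)$ produces contraction of the centered part at rate $K$, while the linear coupling $x/c$ in $g$ transfers stability from $X$ to $Y$; the cubic and mean-field perturbations are absorbed via Step~1 and $\gd$-smallness. A Gronwall argument then delivers a $W_{2}$-contraction estimate with a fast rate on the variance part and an $O(\gd)$ rate on the mean part (ultimately governed by the linear stability of $\gga^{\gd}$).

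\textbf{Step 3: quantitative Gaussian approximation.} Show that for $\gd$ small enough, any solution starting at bounded $W_{2}$-distance of $\cG_{\gd}$ becomes $O(\gd)$-close to $\cG_{\gd}$ after a time $O(\log(1/\gd)/K)$ and remains so uniformly in time. The natural strategy is to compare $\mu_{t}$ to the running Gaussian $q^{\gd}_{m_{t}}$ and to derive an evolution equation for the discrepancy from \eqref{eq:PDER kinetic FhN}; the non-Gaussian deviations should decay at the fast rate $K$ while the drift of $m_{t}$ introduces only $O(\gd)$ errors, and the linear skeleton \eqref{eq: def Z} identifies the asymptotic covariance as $\Gamma_{\gd}$ up to corrections of order $\gd$.

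\textbf{Step 4: reduced dynamics and invariant cycle.} Once Gaussianity is established to order $\gd$, the equation for $m_{t}$ becomes an autonomous ODE that coincides with $\dot m_{t}=\gd F_{\gs^{2}/K}(m_{t})$ up to $O(\gd^{2})$. By assumption this ODE admits the stable limit cycle $\gga^{\gd}$. Introduce a Poincar\'e section $\Sigma$ transverse to $\gga^{\gd}$ inside a tube of Gaussian measures around $\gga^{\gd}$, and define a first-return map $\Pi:\Sigma\to\Sigma$. Combining the contraction of Step~2 with the hyperbolic attractivity of $\gga^{\gd}$ for the effective ODE, $\Pi$ is a $W_{2}$-contraction on a small neighbourhood; its unique fixed point defines the periodic solution $\nu^{\text{per}}_{t}$ and the invariant cycle $\cC_{\gd}$, which lies at $W_{2}$-distance $O(\gd)$ from $\cG_{\gd}$ by construction.

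\textbf{Step 5: exponential stability.} For $\mu_{0}$ satisfying the hypotheses, Steps~2--3 drive the solution into the Gaussian tube around $\gga^{\gd}$ on the fast scale, after which iterating the contraction of $\Pi$ and interpolating between successive Poincar\'e crossings yields exponential convergence at a rate $\gl(\gd)>0$ inherited from the linear stability rate of $\gga^{\gd}$ for $F_{\gs^{2}/K}$ (hence of order $\gd$).

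\textbf{Main obstacle.} Step~3 is the technical heart of the argument. Without noise on the $Y$-component, standard hypocoercive or reflection-coupling Wasserstein estimates do not directly control the distance to the Gaussian manifold: Gaussianity of the $Y$-marginal can only be inherited via the deterministic linear coupling from $X$, and this propagation has to be tracked with $O(\gd)$ precision uniformly over the cycle time scale $O(1/\gd)$. A careful choice of the running reference mean $m_{t}$ and of the Wasserstein weight is needed so that the residual covariance error does not accumulate over this long horizon. This is precisely the point where the present Wasserstein approach replaces the $L^{2}$-spectral methods of \cite{Lucon:2018qy}, which cannot accommodate the degenerate noise structure.
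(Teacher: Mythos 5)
Your proposal captures the essential structure of the argument: a priori moment bounds, synchronous coupling and Wasserstein contraction, quantitative Gaussian approximation exploiting the fast OU relaxation in $X$, and a fixed-point construction of the invariant cycle. You also identify the correct main obstacle, namely that the degenerate noise forces Gaussianity of the $Y$-marginal to be inherited through the deterministic linear coupling, tracked to $O(\gd)$ precision over the long time scale $O(1/\gd)$.

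The one genuine methodological departure from the paper is in your Step~4. You propose a Poincar\'e section $\Sigma$ transverse to the putative cycle and a first-return map $\Pi:\Sigma\to\Sigma$. The paper instead works with a function space $\cF$ of Lipschitz parametrizations $f:\mathbb{S}_\gd\to\cP_2$ of candidate invariant curves, and shows that the time-$T/\gd$ flow, suitably reparametrized through $g_{t,f}$, is a contraction on $(\cF,\dist_\cF)$. Both frameworks encode the same phase-reduction idea, but the function-space approach has a concrete advantage here: the Lipschitz conditions \eqref{def f W_2b lipsch}--(5) built into $\cF$, and the phase-dependent distance $W_\theta$, give direct quantitative control of the regularity of the invariant curve in $\theta$, which is then exploited in Lemma~\ref{lem:encadre} and Lemma~\ref{lem:X contracts}. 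With a Poincar\'e section one has to re-derive this regularity of the whole orbit after finding the fixed point on $\Sigma$, and also control the return-time dependence on the starting measure in the infinite-dimensional setting; this is doable, but it is extra work relative to the paper's construction.

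Two ingredients that are essential in the paper but are only alluded to in your sketch deserve explicit flagging. First, the Floquet decomposition around $\gamma^\gd$ (the projections $p_\theta, p_\theta^\perp$ and the principal matrix solution $\Pi^\gd(t,s)$) is what turns the ``hyperbolic attractivity of $\gamma^\gd$'' you invoke into usable contraction estimates on the mean part of the dynamics; see \eqref{eq:contract p perp_delta}, Lemma~\ref{lem:proj} and Lemma~\ref{lem:proj on gamma}. Second, the specific weighted Wasserstein-type distance $W$ (Definition~\ref{def:wasserstein}) with the $\gd^\beta$ weight on the mean part is not a cosmetic choice: the variance part contracts at rate $K=O(1)$ while the transverse mean part contracts only at rate $\gd\gl$, and the two feed error into each other over the cycle time scale $O(1/\gd)$. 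The $\gd^\beta$ weight with $\beta\in(0,1)$ is precisely what balances these two scales so that the contraction in Lemma~\ref{lem:X contracts} closes with an $O(\gd^\beta+\gd^{1-\beta})$ correction. Your Step~2 mentions a ``weighted quadratic functional'' but does not isolate this mechanism, which is where a careless choice of weight would cause the argument to fail.

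Finally, your Step~3 strategy of ``deriving an evolution equation for the discrepancy'' between $\mu_t$ and $q^\gd_{m_t}$ is implemented in the paper more concretely as a coupling with the explicit linear process $Z_t$ solving \eqref{eq: def Z}, whose law is exactly $q_0$; the discrepancy $\tilde X_t - Z^x_t$, $\tilde Y_t - Z^y_t$ then obeys linear Gronwall-type inequalities (Lemma~\ref{lem:close to q0}, using Lemma~\ref{lem:gronwall_sqrt}). This concreteness is what makes the degenerate-noise propagation tractable, and would be the first thing to pin down if you wanted to turn your sketch into a full proof.
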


\begin{rem} The constants $\lambda(\gd)$ and $C(\gd)$ obtained in the proof of Theorem \ref{th:main} satisfy
\begin{equation}
\gl(\gd)\underset{\gd\rightarrow 0}{\longrightarrow} \min\left\{ \gl, \frac{b}{c}\right\},\qquad \text{and} \qquad C(\gd)\underset{\gd\rightarrow 0}{\longrightarrow} \infty,
\end{equation}
where $\gl$ is the exponential rate of attraction of the limit cycle of \eqref{eq:FhN gs2/K}, in the case $ \delta=1$.
\end{rem}

\begin{rem}
We focus in this work on the proof of the existence of a stable cycle for \eqref{eq:PDER kinetic FhN} when \eqref{eq:FhN gs2/K} admits one, but it is clear that one can prove the existence of a stable fixed point for \eqref{eq:PDER kinetic FhN} when \eqref{eq:FhN gs2/K} admits one by following the same arguments (in fact simpler arguments, no need of Floquet theory in that case).
\end{rem}

\subsection{Comments and existing literature}
\label{sec:comments}
\subsubsection{On the mean-field FitzHugh-Nagumo model}
The structure of the proof we provide in this paper, relying on a fixed-point theorem, is inspired from the classical proofs of the theory of persistence of normally hyperbolic invariant manifold \cite{fenichel1971persistence,fenichel1979geometric,hirsch1977invariant,Bates1998,sell2013dynamics,wiggins2013normally}, which we could not apply directly here due to the singularity of our problem (when $\gd=0$ the dynamics of $Y_t$ is trivial and thus there can not be any stable compact invariant manifold for \eqref{eq:PDER kinetic FhN}).

Several recent other works have analyzed the long-time behavior of the FitzHugh-Nagumo Fokker-Planck PDE \eqref{eq:PDER kinetic FhN}. The paper \cite{Mischler2016} considers the situation of small interaction (i.e. $ \delta>0$ and $ \sigma>0$ fixed with $K\to0$). Along with some well-posedness estimates concerning \eqref{eq:PDER kinetic FhN}, the main result of \cite{Mischler2016} concerns the existence of stationary states for \eqref{eq:PDER kinetic FhN} in the limit of small interaction. The case where $K\to\infty$ with $ \delta$ and $ \sigma$ fixed is analyzed in \cite{2018arXiv180406758Q}. The authors prove concentration in large time around singular solutions (clamping) in such a case. In a regime similar to ours, one should finally mention the recent \cite{2019arXiv190501342T} which analyzes the possible microscopic mechanisms responsible for the emergence of collective oscillations.

As already said, this paper addresses the case of an interaction modeling electrical synapses. A common framework in neuronal models concerns interactions through chemical synapses \cite{MR3392551}, which is not covered by this work. The question of the possibility of extension of the present results to this case is open. In this direction, a recent work \cite{bossy2018synchronization} addresses synchronization issues of interacting neurons with Hodgkin-Huxley dynamics with chemical synapses.

\subsubsection{On the dynamics of the particle system \eqref{eq:particle_syst}}
Obtaining a rigorous proof of the existence of periodic behaviors in infinite population limit of mean-field interacting particle models is a problem that has been studied in different situations, as chemical reaction models, rotors models, spin models, Hawkes processes, etc., see for example \cite{scheutzow1985noise,scheutzow1986periodic,giacomin2012transitions,giacomin2014coherence,giacomin2015noise,dai2013curie,collet2016,ditlevsen2017multi}, each proof relying, as in the present work, on a drastic phase reduction of the model. 

Transposing the dynamics of the nonlinear process \eqref{eq:kinetic FhN} to the dynamics of the particle system \eqref{eq:particle_syst} is a nontrivial task, that we do not address here (see nonetheless Figure~\ref{fig:particle system} for a numerical evidence of the accuracy of the present analysis w.r.t. the particle system \eqref{eq:particle_syst}). A standard way to couple i.i.d. copies $(X^{ (i)}_{ t}, Y^{ (i)}_{ t})$ ($i=1, \ldots, n$) of \eqref{eq:kinetic FhN} with the particles $(X_{ i,t}, Y_{ i, t})$ in \eqref{eq:particle_syst} is to choose identical initial condition and noise $B_{ i}$ so that the following standard propagation of chaos estimates \cite{sznitman1991topics,Mischler2016,LucSta2014,Lucon:2018qy} holds: 
\begin{equation}
\label{eq:prop_chaos}
\sup_{ i=1, \ldots, n} \mathbf{ E} \left[\sup_{ 0\leq t \leq T} \left\vert (X_{i,t}, Y_{ i, t}) - (X^{ (i)}_{ t}, Y^{ (i)}_{ t}) \right\vert^{ 2}\right] \leq \frac{ C e^{ C T}}{ n}.
\end{equation}
This propagation of chaos result may be equivalently expressed in terms of the convergence of the empirical  measure $ \mu_{ n, t}:= \frac{ 1}{ n} \sum_{ j=1}^{ n} \delta_{ (X_{ j, t}, Y_{ j, t})}$ of the particle system \eqref{eq:particle_syst} to the solution of \eqref{eq:PDER kinetic FhN}, on any time interval $[0, T]$. From \eqref{eq:prop_chaos}, we see that \eqref{eq:kinetic FhN} gives a correct approximation of \eqref{eq:particle_syst} at least up to times $T$ of order $c \ln n$, for $c>0$ small enough. The question of the relevance of this mean-field approximation for times $T\gg \ln n$ is a long-standing issue in the literature (see e.g. \cite{Bertini:2013aa,Durmus2018} and references therein). In the case of collective periodic behaviors that is of particular interest in neuroscience, one should mention in particular  \cite{Bertini:2013aa,MR3689966} (for phase oscillators), \cite{ditlevsen2017multi} (for point processes with Hawkes dynamics) and \cite{2018arXiv181105937A} (for Curie-Weiss dynamics modeling social interactions).

\subsection{Organization of the paper}
In Section~\ref{sec:closeness}, we prove some controls on the moments of \eqref{eq:kinetic FhN} as well as some estimates of proximity of solutions of \eqref{eq:PDER kinetic FhN} to the Gaussian manifold \eqref{eq:G_delta}. Section~\ref{sec:contract} gathers the main estimates (relying in particular on Floquet theory) concerning \eqref{eq:kinetic FhN}. The main result (Theorem~\ref{th:main}) is proven in Section~\ref{sec:fixed_point}. A technical lemma is postponed to the appendix.
\section{Moment estimates and proximity to Gaussian distributions}\label{sec:closeness}
Take $R_0>0$ such that the periodic solution $\gamma^{ \delta}$ of \eqref{eq:FhN gs2/K} is strictly included in the open euclidean ball $B(0,R_0)$. Note that, by Remark~\ref{rem:gamma_delta}, $R_{ 0}$ does not depend on $ \delta$. For any initial condition to \eqref{eq:kinetic FhN} such that $ \left( \mathbb{ E} \left[X_{ 0}\right], \mathbb{ E} \left[Y_{ 0}\right]\right)\in B(0, R_{ 0})$, define the exit time
\begin{equation}
T_e(R_0)=T_{ e}^{ \delta}(R_{ 0})=\inf\{t\geq 0:\, (\bE[X_t],\bE[Y_t])\notin B(0,R_0)\}. 
\end{equation}
Here, we note that $T_{ e}(R_{ 0})$ is independent of the particular coupling of $(X_{ 0}, Y_{ 0})$, provided its marginals are fixed.

\begin{lemma}\label{lem:moments X Y}
There exist positive constants $\gd_0$, $ \kappa_0^x$ and $ \kappa_0^y$ such that if $\gd\leq \gd_0$ the following is true: for any initial condition $ (X_{ 0}, Y_{ 0})$ to \eqref{eq:kinetic FhN} such that 
\begin{equation}\label{hyp:lem first bound moments}
\bE[X_0^6]\leq \kappa_0^x, \quad \text{and} \quad  \bE[Y_0^6]\leq \kappa_0^y, 
\end{equation}
then the solution $(X_t,Y_t)$ of \eqref{eq:kinetic FhN} satisfies
\begin{equation}
\sup_{t\in \left[0,T_e(R_0)\right]} \bE[X_t^6]\leq \kappa_0^x ,\quad \text{and} \quad  \sup_{t\in \left[0,T_e(R_0)\right]} \bE[Y_t^6]\leq \kappa_0^y.
\end{equation}
\end{lemma}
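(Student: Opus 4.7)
The plan is to derive a coupled system of differential inequalities for $\bE[X_t^6]$ and $\bE[Y_t^6]$ and then exhibit a positively invariant rectangle $[0,\kappa_0^x]\times[0,\kappa_0^y]$ for this system via a Nagumo-type barrier argument: at each face of the rectangle the outward component of the vector field is strictly negative, so a trajectory starting inside cannot escape before $T_e(R_0)$. Throughout, the key usable input is that on $[0,T_e(R_0)]$ one has $|\bE[X_t]|\leq R_0$, and I will assume without further comment that moments up to order $8$ exist and that It\^o's formula applies (this is granted by the well-posedness results recalled in the introduction).

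For the $X$-component, It\^o's formula gives
\begin{equation*}
\frac{\dd}{\dd t}\bE[X_t^6]=-6K\,\bE[X_t^6]+6K\,\bE[X_t^5]\,\bE[X_t]+6\gd\,\bE[X_t^6]-2\gd\,\bE[X_t^8]-6\gd\,\bE[X_t^5 Y_t]+30\gs^2\,\bE[X_t^4].
\end{equation*}
The cubic contribution $-2\gd\,\bE[X_t^8]\leq 0$ is dropped. The interaction term is controlled, via $|\bE[X_t]|\leq R_0$, by $6KR_0\,\bE[X_t^6]^{5/6}$; the cross term $-6\gd\,\bE[X_t^5 Y_t]$ is handled by the weighted Young inequality with exponents $6/5$ and $6$ (small weight on the $Y$-factor); the noise term $30\gs^2\,\bE[X_t^4]$ is treated similarly. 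Absorbing the resulting lower-order contributions into the strong damping $-6K\,\bE[X_t^6]$ (legitimate once $\gd$ is small enough), one obtains an inequality of the form
\begin{equation*}
\frac{\dd}{\dd t}\bE[X_t^6]\leq -3K\,\bE[X_t^6]+C_1\gd\,\bE[Y_t^6]+C_2.
\end{equation*}

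For the $Y$-component, the chain rule (no diffusive part) yields
\begin{equation*}
\frac{\dd}{\dd t}\bE[Y_t^6]=\frac{6\gd}{c}\bigl(\bE[X_t Y_t^5]+a\,\bE[Y_t^5]-b\,\bE[Y_t^6]\bigr).
\end{equation*}
Here the dissipative term is only of order $\gd$, of the same order as the source terms. Applying weighted Young's inequality again, with sufficiently small weight on the $Y$-factor so that the cross contributions dilute $-6b\gd/c$ by no more than a factor one-half, one concludes
\begin{equation*}
\frac{\dd}{\dd t}\bE[Y_t^6]\leq -\frac{3b\gd}{c}\,\bE[Y_t^6]+C_3\gd\,\bE[X_t^6]+C_4\gd.
\end{equation*}

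Invariance of the rectangle then reduces to the two algebraic conditions $3K\kappa_0^x>C_1\gd\kappa_0^y+C_2$ and $(3b/c)\kappa_0^y>C_3\kappa_0^x+C_4$. Fixing $\kappa_0^y:=2(c/(3b))(C_3\kappa_0^x+C_4)$, the first constraint becomes a single inequality on $\kappa_0^x$ in which $\kappa_0^x$ reappears on the right-hand side only with a coefficient of order $\gd$; for $\gd\leq\gd_0$ small enough it admits a solution $\kappa_0^x$ of order $C_2/K$, producing the required $\gd_0$, $\kappa_0^x$ and $\kappa_0^y$. The actual barrier argument is then standard: if the first time $\tau\leq T_e(R_0)$ at which $(\bE[X_t^6],\bE[Y_t^6])$ hits the boundary were finite, one of the two inequalities above would force a strictly inward derivative at $\tau$, contradicting the definition of $\tau$. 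The main subtlety is the degeneracy of the $Y$-dynamics: since $Y$ receives no noise and only the $O(\gd)$ mean-reversion $-b\gd Y/c$, the Young weights and the smallness of $\gd$ must be carefully calibrated so that the weak $Y$-damping still dominates the coupling to the $X$-moments, which is really where the hypothesis $\gd\leq\gd_0$ is used.
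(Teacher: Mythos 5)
Your proof is correct and follows essentially the same strategy as the paper: derive coupled differential inequalities for $\bE[X_t^6]$ and $\bE[Y_t^6]$ via It\^o's formula and Young-type estimates, then close a positively invariant rectangle by a barrier argument, with the key observation that the $O(\gd)$ damping in the $Y$-inequality is of the same order as the $Y$-source terms. The only substantive step you skip is the localization via the stopping time $\tau_n$ that the paper uses to justify finiteness of the moments and the martingale property before taking expectations, which you acknowledge and delegate to the well-posedness results.
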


\begin{proof}
For $n>0$ denote the stopping time $\tau_n=\inf\{t\geq 0: X_t^2+Y_t^2=n^2\}$,  and denote $X^{(n)}_t=X_{t\wedge \tau_n}$ and $Y^{(n)}_t=Y_{t\wedge \tau_n}$. By It\^o formula we get
\begin{multline}
\frac16 (X^{(n)}_t)^6=\frac16 (X^{(n)}_0)^6+ \int_0^{t\wedge \tau_n} \Bigg[\gd (X^{(n)}_s)^5\left( X^{(n)}_s-\frac{(X^{(n)}_s)^3}{3} -Y^{(n)}_s\right)+K\bE[X_s](X^{(n)}_s)^5\\
-K (X^{(n)}_s)^6+5\gs^2(X^{(n)}_s)^2\Bigg]\dd s+\gs \int_0^{t\wedge \tau_n}(X^{(n)}_s)^5\dd B_{s}.
\end{multline}
Using the following inequalities, for positive constant $c_1,\ldots,c_4$ that we do not give explicitly to keep notations simple,
\begin{multline}
x^6-\frac{x^8}{3}\leq c_1-\frac{x^8}{4},\quad  x^5y\leq \frac{x^8}{4}+c_2 y^6,\quad  Kx^5\bE[X_t]-K x^6\leq c_3\bE[X_t]-\frac{K x^6}{3},\\
5\gs^2 x^4-\frac{2K x^6}{3}\leq c_4 -\frac{K x^6}{3},
\end{multline}
we obtain,
\begin{equation}\label{eq:derivate EXn6}
\frac14 \frac{\dd}{\dd t}\bE[(X^{(n)}_t)^6]\leq c_1\gd +c_4+c_3 \bE[X_t] +c_2 \gd \bE[(Y^{(n)}_t)^6]-\frac{K}{3}\bE[(X^{(n)}_t)^6] .
\end{equation}
On the other hand,
\begin{equation}
\frac{c}{6\gd}(Y^{(n)}_t)^6=\frac{c}{6\gd}(Y^{(n)}_0)^6+\int_0^{\tau_n}(Y^{(n)}_s)^5\left(X^{(n)}_s+a-bY^{(n)}_s\right)\dd s,
\end{equation}
and using the inequalities, for positive constants $c_6,c_7$,
\begin{equation}
xy^5\leq c_6 x^6 +\frac{b}{3}y^6,\quad \text{and}\quad ay^5-\frac{2b}{3}y^6\leq c_7 -\frac{b}{3}y^6,
\end{equation}
we get
\begin{equation}\label{eq:derivate EYn6}
\frac{c}{6\gd} \frac{\dd}{\dd t}\bE[(Y^{(n)}_t)^6]\leq c_7 +c_6\bE[(X^{(n)}_t)^6]-\frac{b}{3}\bE[(Y^{(n)}_t)^6] .
\end{equation}
Using \eqref{eq:derivate EXn6} and \eqref{eq:derivate EYn6} and the Gr\"onwall inequality we deduce that in fact $\bE[X_t^6]+\bE[Y_t^6]<\infty$ and the estimates made above are in fact valid for $n=\infty$. In particular, for $t\in [0,T_e(R_0)]$ we have
\begin{align}
\frac14 \frac{\dd}{\dd t}\bE[X_t^6]&\leq c_1\gd +c_4+c_3 R_0 +c_2 \gd \bE[Y_t^6]-\frac{K}{3}\bE[X_t^6],\label{eq:derivate E X6} \\
\frac{c}{6\gd} \frac{\dd}{\dd t}\bE[Y_t^6]&\leq c_7 +c_6\bE[X_t^6]-\frac{b}{3}\bE[Y_t^6]\label{eq:derivate E Y6}.
\end{align}
Define now $ \kappa_0^x=\frac{4}{K}(c_1+c_3R_0+c_2)$, and $ \kappa_0^y=\frac{4}{b}(c_7+c_6 \kappa_0^x)$ and
\begin{equation}
t_0=\inf\{t\geq 0:\, \bE[(X_t^{(n)}]\geq \kappa_0^x\, \text{or}\, \bE[Y_t]\geq \kappa_0^y\}. 
\end{equation}
For all $t\in [0,t_0\wedge T_e(R_0)]$ supposing that $\bE[X_0^6]\leq \kappa_0$ and $\bE[Y_0^6]\leq \kappa_0$, we get from \eqref{eq:derivate E X6} and \eqref{eq:derivate E Y6} that
\begin{align}
\bE[X_t^6]&\leq \max\left\{ \kappa_0^x,\frac{3}{K}(c_1\gd +c_4+c_3 R_0+c_2\gd \kappa_0^y), \right\},\\
\bE[Y_t^6]&\leq \max\left\{ \kappa_0^y,\frac{3}{b}(c_7+c_6 \kappa_0^x)\right\}.
\end{align}
and these maxima are equal to $ \kappa_0^x$ and $ \kappa_0^y$ respectively as soon as $\gd\leq \min\{1,1/R_0\}$. Moreover it is clear with this choice of $ \kappa_0^x$ that if $X_t$ satisfies $\bE[X_t^6]= \kappa_0^x$, then $\frac{\dd}{\dd t}\bE[X_t^6]<0$, and the same applies for $Y_t$. So $t_0\wedge T_e(R_0)=T_e(R_0)$, which concludes the proof.
\end{proof}
\begin{definition}
\label{def:wasserstein}
For a probability distribution $\nu$ on $ \mathbb{ R}^{ 2}$, $m=\int_{\bbR^2}z \dd \nu(z)$ its expectation, we denote $\tilde \nu$ the centered version of $ \nu$: $\int_{\bbR^2}\phi(z)\dd \nu(z)=\int_{\bbR^2}\phi(z+m)\dd \tilde \nu(z)$ (for all test function $\phi$).
For two distributions $ \nu_{ 1}$ and $ \nu_{ 2}$ on $ \mathbb{ R}^{ 2}$, denote by $ \mathcal{ C}( \nu_{ 1}, \nu_{ 2})$ the set of all couplings $ \pi$ of $ \nu_{ 1}$ and $ \nu_{ 2}$, that is the set of all probability measures $ \pi( {\rm d}(x_{ 1}, y_{ 1}), {\rm d}(x_{ 2}, y_{ 2}))$ on $ \mathbb{ R}^{ 2} \times \mathbb{ R}^{ 2}$ with marginals $ \nu_{ 1}( {\rm d}(x_{ 1}, y_{ 1}))$ and $ \nu_{ 2}( {\rm d}(x_{ 2}, y_{ 2}))$. If $ \pi\in \mathcal{ C}( \nu_{ 1}, \nu_{ 2})$, we denote by $ \tilde{ \pi}\in \mathcal{ C}( \tilde{ \nu}_{ 1}, \tilde{ \nu}_{ 2})$ the corresponding coupling of the centered measures: 
\[\int_{ \mathbb{ R}^{ 2}\times \mathbb{ R}^{ 2}} \phi(z_{ 1}, z_{ 2}) \pi({\rm d}z_{ 1}, {\rm d}z_{ 2})= \int_{ \mathbb{ R}^{ 2}\times \mathbb{ R}^{ 2}} \phi(z_{ 1}+m_{ 1}, z_{ 2}+m_{ 2}) \tilde{ \pi}({\rm d}z_{ 1}, {\rm d}z_{ 2})\]

For $\beta\in (0, 1)$, we consider the following Wasserstein-type distance $W=W(\gd,b,\beta)$ on $ \mathcal{ P}( \mathbb{ R}^{ 2})$, 
\begin{align}
\label{eq:def_W_1}
W(\nu_1,\nu_2):= \inf \left\lbrace \Lambda(\pi),\ \pi\in \mathcal{ C}(\nu_{ 1}, \nu_{ 2})\right\rbrace,
\end{align}
where for a fixed $ \pi\in \mathcal{ C}( \nu_{ 1}, \nu_{ 2})$, $z_{ i}=(x_{ i}, y_{ i})\in \mathbb{ R}^{ 2}$, $i=1,2$
\begin{multline}
\label{eq:def_d}
\Lambda(\pi):=\max \Bigg(\gd^\beta \left\vert \int_{ \mathbb{ R}^{ 2}\times \mathbb{ R}^{ 2}} (z_{ 1}- z_{ 2}) \pi({\rm d}z_{ 1}, {\rm d} z_{ 2})\right\vert,\\ \sqrt{\int \left\vert x_{ 1}- x_{ 2}\right\vert^{ 2} \tilde{ \pi}({\rm d}z_{ 1}, {\rm d}z_{ 2})},\ b \sqrt{\int \left\vert y_{ 1}-y_{ 2} \right\vert^{ 2} \tilde{ \pi}( {\rm d}z_{ 1}, {\rm d}z_{ 2})}\Bigg)\, .
\end{multline}
\end{definition}
The factor $\gd^\beta$ in the definition of the distance $W$ (and later the definition of the distances $W_\theta$, see section \ref{sec:fixed_point}) will be useful to obtain the contraction property, at the end of the proof of Lemma \ref{lem:X contracts}.

\begin{rem}
\begin{enumerate}
\item The definition \eqref{eq:def_d} is of course equivalent to
\begin{equation}
\Lambda(\pi)=\max \left(\gd^\beta \left\vert \mathbf{E} \left[\left(X_{ 1}, Y_{ 1}\right)\right] - \mathbf{ E} \left[\left(X_{ 2}, Y_{ 2}\right)\right]\right\vert,\ \sqrt{\bE\left[| \tilde{ X}_1- \tilde{X}_2|^2\right] },\ b \sqrt{\bE\left[| \tilde{Y}_1- \tilde{ Y}_2|^2\right]}\right)\, ,
\end{equation}
where $ \left\lbrace \left(X_{ 1}, Y_{ 1}\right), \left(X_{ 2}, Y_{ 2}\right)\right\rbrace \sim \pi$, for $ \tilde{ X}= X - \mathbf{ E} \left[X\right]$.
\item
Note that the first term in \eqref{eq:def_d} is independent of the coupling $\pi \in \mathcal{ C}( \nu_{ 1}, \nu_{ 2})$, so that we can also write
\begin{align}
\label{eq:def_W_2}
W(\nu_1,\nu_2)=\max \left\lbrace \gd^\beta\left|\int_{\bbR^2}z \nu_1({\rm d}z)-\int_{\bbR^2}z \nu_2({\rm d}z) \right|, W \left( \tilde{ \nu}_{ 1}, \tilde{ \nu}_{ 2}\right)\right\rbrace\, .
\end{align}
\item
Remark that $W$ is equivalent to the standard Wasserstein-2 distance $W_2$: for $\gd$ small enough
\begin{equation}
\min\left(1,b^{-1} \right) W\leq  W_2\leq 3\gd^{-\beta} W.
\end{equation}
\end{enumerate}
\end{rem}

Denote by $(\tilde X_t,\tilde Y_t)$ the centered version of $(X_t,Y_t)$, that satisfies
\begin{equation}
\label{eq:evol_tildeXY}
\left\{\begin{array}{l}
\dd \tilde X_t = \gd \left( X_t-\frac{ X_t^3}{3} - Y_t\right)\dd t-\dot x_t\dd t - K \tilde X_t \dd t+\sqrt{2} \gs \dd B_t\\
\dd \tilde Y_t = \frac{\gd}{c}\left(\tilde X_t-b\tilde Y_t\right)\dd t
\end{array}
\right. ,
\end{equation} 
where $m_t=(x_t,y_t)=(\bE[X_t],\bE[Y_t])$. Observe that, by \eqref{eq:kinetic FhN}, $m_{ t}$ solves 
\begin{equation}
\label{eq:evol_mt}
\left\{\begin{array}{l}
\dot x_{ t} = \gd \left( x_{ t}-\frac{ \int x^{ 3} {\rm d}\mu_{ t}}{3} - y_{ t}\right)\\
\dot y_{ t} = \frac{\gd}{c}\left(x_{ t}+a-b y_{ t}\right)
\end{array}
\right. .
\end{equation} 
\begin{lemma}\label{lem:close to q0}
There exists positive constants $\gd_1$ and $ \kappa_1$ such that, for all initial condition $ \mu_{ 0}$ such that $ m_{ 0}=\left(x_{ 0}, y_{ 0}\right)=\left( \int x \mu_{ 0}( {\rm d}x, {\rm d}y), \int y \mu_{ 0}({\rm d}x, {\rm d}y)\right)\in B \left(0, R_{ 0}\right)$ and $ \int x^{ 6} \mu_{ 0}({\rm d}x, {\rm d}y)\leq \kappa_{ 0}^{ x}$ and $ \int y^{ 6} \mu_{ 0}({\rm d}x, {\rm d}y)\leq \kappa_{ 0}^{ y}$ (recall Lemma \ref{lem:moments X Y}), the following is true: if $\gd\leq \gd_1$ and $W(\tilde \mu_0,q_{0})\leq \kappa_1\gd $, then
\begin{equation}
\sup_{t\in[0,T_e(R_0)]} W(\tilde \mu_t,q_{0})\leq \kappa_1\gd.
\end{equation}
\end{lemma}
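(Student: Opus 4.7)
The strategy is to couple the centered process $(\tilde X_t, \tilde Y_t)$ with a reference Gaussian process $Z_t = (Z^x_t, Z^y_t)$ that remains distributed according to $q_0$ for all times. Concretely, I fix a near-optimal coupling between $(\tilde X_0, \tilde Y_0) \sim \tilde\mu_0$ and $Z_0 \sim q_0$ that realizes, up to $\varepsilon$, the infimum defining $W(\tilde\mu_0, q_0)$, and let $Z_t$ evolve according to the linear system \eqref{eq: def Z} driven by the same Brownian motion $B_t$ as in \eqref{eq:evol_tildeXY}. As already computed in Section \ref{sec:slow-fast approach}, $q_0 = \mathcal{N}(0, \Gamma_\delta)$ is exactly the invariant Gaussian of \eqref{eq: def Z}, so $Z_t \sim q_0$ for every $t \geq 0$ and the joint law of $((\tilde X_t, \tilde Y_t), Z_t)$ is an admissible coupling of $\tilde\mu_t$ and $q_0$. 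Since both marginals are centered, the mean-difference term in the definition of $W$ vanishes identically, so it is enough to estimate $\sqrt{\bE[U_t^2]}$ and $b\sqrt{\bE[V_t^2]}$ where $U_t := \tilde X_t - Z^x_t$ and $V_t := \tilde Y_t - Z^y_t$.

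The synchronous coupling cancels the Brownian noise; subtracting \eqref{eq: def Z} from \eqref{eq:evol_tildeXY} and substituting $\dot x_t$ from \eqref{eq:evol_mt} gives the pathwise ODE system
\begin{align*}
\dot U_t &= -K U_t + \delta\bigl(\tilde X_t - \tfrac{1}{3}(X_t^3 - \bE[X_t^3]) - \tilde Y_t\bigr),\\
\dot V_t &= \tfrac{\delta}{c}\bigl(U_t - b V_t\bigr).
\end{align*}
Denoting $R_t$ the bracket on the first line, differentiating $\bE[U_t^2]$, $\bE[V_t^2]$ and applying Cauchy--Schwarz produces the scalar differential inequalities
\begin{align*}
\tfrac{d}{dt}\sqrt{\bE[U_t^2]} &\leq -K\sqrt{\bE[U_t^2]} + \delta\sqrt{\bE[R_t^2]},\\
\tfrac{d}{dt}\sqrt{\bE[V_t^2]} &\leq -\tfrac{b\delta}{c}\sqrt{\bE[V_t^2]} + \tfrac{\delta}{c}\sqrt{\bE[U_t^2]}.
\end{align*}

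The crucial point is to bound $\bE[R_t^2]$ uniformly on $[0, T_e(R_0)]$. Since $|m_t| \leq R_0$ on this interval and Lemma \ref{lem:moments X Y} yields $\bE[X_t^6] \leq \kappa_0^x$ and $\bE[Y_t^6] \leq \kappa_0^y$, one controls both $\bE[\tilde X_t^2]$, $\bE[\tilde Y_t^2]$ and $\bE[(X_t^3 - \bE[X_t^3])^2] \leq \bE[X_t^6]$, which gives $\bE[R_t^2] \leq C_A$ for a constant $C_A$ depending only on $R_0$, $\kappa_0^x$, $\kappa_0^y$ and the fixed parameters. Taking $\kappa_1$ large enough so that $K\kappa_1 > \sqrt{C_A}$, a straightforward comparison argument on the two scalar ODEs shows that the bounds $\sqrt{\bE[U_t^2]} \leq \kappa_1\delta$ and $b\sqrt{\bE[V_t^2]} \leq \kappa_1\delta$, valid at $t=0$ up to $\varepsilon$, are propagated on the whole interval $[0, T_e(R_0)]$: the first because the drift in the first inequality becomes nonpositive once $\sqrt{\bE[U_t^2]}$ reaches $\kappa_1\delta$, and the second because the upper bound on $U_t$ feeds into the second inequality and makes $b\sqrt{\bE[V_t^2]} = \kappa_1\delta$ a stable ceiling. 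Letting $\varepsilon \to 0$ concludes.

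The main technical obstacle is the control of the cubic nonlinearity $X_t^3 - \bE[X_t^3]$ appearing in the drift of $U_t$: without an a priori moment bound, this term could grow uncontrollably and spoil the synchronous-coupling argument. This is precisely what the sixth-moment estimate of Lemma \ref{lem:moments X Y} is designed to supply, and the smallness $\delta \ll 1$ is needed to ensure the perturbation remains of order $\delta$ while the restoring rate $K$ is of order one.
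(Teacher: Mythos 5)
The proposal is correct and takes essentially the same approach as the paper: a synchronous coupling with the stationary linear process $Z_t$ of \eqref{eq: def Z}, a bound on the $\delta$-small perturbation via the moment controls of Lemma~\ref{lem:moments X Y}, and propagation of the bound $\kappa_1\delta$ through a Grönwall-type comparison with a square-root nonlinearity (formalized in the paper as Lemma~\ref{lem:gronwall_sqrt}). The only cosmetic difference is that you work directly with $\sqrt{\bE[U_t^2]}$ and choose a strictly stable ceiling for $U_t$, whereas the paper takes $\kappa_1 = C_0/K$ exactly and invokes the appendix lemma to handle the marginal case and the non-differentiability at zero.
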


\begin{proof}
We suppose that $ \delta\leq \delta_{ 0}$ where $ \delta_{ 0}$ is given by Lemma~\ref{lem:moments X Y}. For an arbitrary $ \varepsilon>0$, consider a coupling $ \tilde{\pi}_{ 0}\in \mathcal{ C}( \tilde{ \mu}_{ 0}, q_{ 0})$ such that
\begin{equation}
\label{eq:coupling_0}
 \Lambda( \tilde{ \pi}_{ 0})^{ 2} < W \left( \tilde{ \mu}_{ 0}, q_{ 0}\right)^{ 2} + \varepsilon.
\end{equation}
In the following, we consider $ \left\lbrace \left( \tilde{ X}_{ 0}, \tilde{ Y}_{ 0}\right), \left( Z_{ 0}^{ x}, Z_{ 0}^{ y}\right)\right\rbrace$ with law $ \tilde{\pi}_{ 0}$. For this initial condition, we consider both $(X_t,Y_t)$ solution to \eqref{eq:kinetic FhN} with initial condition $(X_{ 0}, Y_{ 0})=(\tilde X_0 + x_{ 0},\tilde Y_0+ y_{ 0})$ and $(\tilde X_t,\tilde Y_t)$, solution to \eqref{eq:evol_tildeXY} with initial condition $(\tilde X_0,\tilde Y_0)$. Consider also the process $( Z^x_t,Z^y_t)$ solution to the system \eqref{eq: def Z} with initial condition $(Z^x_0,Z^y_0)$ with distribution $q_0$. The calculations made in Section~\ref{sec:slow-fast approach} show that $(Z^x_t,Z^y_t)$ has also distribution $q_0$, so that, by construction, the law $ \tilde{ \pi}_{ t}$ of $ \left\lbrace \left( \tilde{ X}_{ t}, \tilde{ Y}_{ t}\right), \left( Z_{ t}^{ x}, Z_{ t}^{ y}\right)\right\rbrace$ belongs to $ \mathcal{ C}( \tilde{ \mu}_{ t}, q_{ 0})$. We have 
\begin{equation}
\frac12 \dd (\tilde X_t-Z^x_t)^2 =-K(X_t-Z^x_t)^2\dd t+ (\tilde X_t-Z^x_t) \left(\gd \left( X_t-\frac{ X_t^3}{3} - Y_t\right)-\dot x_t \right)\dd t.
\end{equation}
By Lemma \ref{lem:moments X Y} and \eqref{eq:evol_mt}, there exists a positive constant $C_{ 0}$ such that for all $t\in \left[0,T_e(R_0)\right]$,
\begin{equation}
\bE\left[\left(\gd \left( X_t-\frac{ X_t^3}{3} - Y_t\right)- \dot x_t \right)^2\right]^{\frac12}\leq C_{ 0} \gd\, .
\end{equation}
Note that the constant $C_{ 0}$ only depends on the parameters $(a, b, c)$ of the model and on $R_{ 0}$ through the constants $ \kappa_{ 0}^{ x}$ and $ \kappa_{ 0}^{ y}$ defined in Lemma~\ref{lem:moments X Y}. In particular, $C_{ 0}$ does not depend on $ \varepsilon$. We then have, for all $t\in \left[0,T_e(R_0)\right]$,
\begin{equation}
\frac12 \frac{\dd}{\dd t}\bE[(\tilde X_t-Z^x_t)^2]\leq C_{ 0} \gd (\bE[(\tilde X_t-Z^x_t)^2])^{\frac12}
- K \bE[(\tilde X_t-Z^x_t)^2].
\end{equation}
If now we choose the constant $ \kappa_{ 1}$ to be $ \kappa_1:=\frac{C_{ 0}}{K} $ and assume that $W(\tilde \mu_0,q_{0})\leq \kappa_1\gd $, then by 
\eqref{eq:coupling_0}, we have $\bE[(\tilde X_0-Z^x_0)^2]\leq \frac{C_{ 0}^2}{K^2}\gd^2 + \varepsilon$. Consequently, by Lemma~\ref{lem:gronwall_sqrt},
\begin{equation}
\label{eq:control_X_Zx}
\sup_{t\in [0,T_e(R_0)]}\bE[(\tilde X_t-Z^x_t)^2]\leq \frac{C_{ 0}^2}{K^2}\gd^2 + \varepsilon.
\end{equation}
In a same way,
\begin{equation}
\frac{c}{2\gd} \dd (\tilde Y_t-Z^y_t)^2= (\tilde Y_t-Z^y_t)(\tilde X_t-Z^x_t)\dd t - b (\tilde Y_t-Z^y_t)^2\dd t,
\end{equation}
and from the estimate \eqref{eq:control_X_Zx} we have just obtained above, we get for $t\in \left[0,T_e(R_0)\right]$
\begin{equation}
\frac{c}{2\gd} \frac{\dd}{\dd t}\bE[(\tilde Y_t-Z^y_t)^2]\leq \left( \frac{ C_{ 0}^{ 2} \delta^{ 2}}{ K^{ 2}}+ \varepsilon\right)^{ \frac{ 1}{ 2}} (\bE[(\tilde Y_t-Z^y_t)^2])^{\frac12} - b \bE[(\tilde Y_t-Z^y_t)^2].
\end{equation}
From $W(\tilde \mu_0,q_{0})\leq \kappa_1\gd $, we obtain that $\bE[(\tilde Y_0-Z^y_0)^2]\leq \frac{ 1}{ b^{ 2}} \left(\frac{C_{ 0}^2}{K^2}\gd^2 + \varepsilon\right)$, so that by Lemma~\ref{lem:gronwall_sqrt}, we have 
\begin{equation}
\label{eq:control_Y_Zy}
\sup_{t\in [0,T_e(R_0)]}\bE[(\tilde Y_t-Z^y_t)^2]\leq \frac{ 1}{ b^{ 2}} \left(\frac{C_{ 0}^2}{K^2}\gd^2 + \varepsilon\right)\, .
\end{equation} 
Estimates \eqref{eq:control_X_Zx} and \eqref{eq:control_Y_Zy} are a fortiori true for the infimum over all couplings $W( \tilde{ \mu}_{ t}, q_{ 0})$. Letting $ \varepsilon \searrow 0$, this concludes the proof, taking $\gd_1=\gd_0$.
\end{proof}
\section{Floquet Theory and contraction close to $\gamma$}\label{sec:contract}
We introduce in this paragraph the minimal notions from Floquet theory \cite{teschl2012ordinary} that are necessary for our purpose. For the simplicity of exposition, this is first done in the case $ \delta=1$ from which the general case $ \delta>0$ may be deduced up to minor modifications (see section \ref{sec:def_general_delta} below).
\subsection{The case $ \delta=1$}
Recall the definition of $F_{ u}$ in \eqref{eq:FHN_u}. Denote by $\Pi(s,t)$ the principal matrix solution associated to the periodic solution $\gamma= \gamma^{ 1}$ of \eqref{eq:FhN gs2/K}, when $ \delta=1$, that is the solution to
\begin{equation}
\partial_t \Pi(t,s)= \, D F_{\frac{\gs^2}{K}}(\gamma_t) \Pi(t,s), \quad \Pi(s,s)=I_d.
\end{equation}
$\Pi(t,s)$ is invertible, with
\begin{equation}
\Pi(t,s)\Pi(s,t)=I_d,
\end{equation}
and satisfies (recall that $t\mapsto \gamma_t$ has period $T_\gamma$):
\begin{equation}
\Pi(t,u)\Pi(u,s)=\Pi(t,s), \quad \Pi(t+T_\gamma,s+T_\gamma)=\Pi(t,s).
\end{equation}
Denote $Q(0)$ the matrix such that
\begin{equation}
\Pi(T_\gamma,0)=e^{-T_\gamma Q(0)}.
\end{equation}
Since
\begin{equation}
F_{\frac{\gs^2}{K}}(\gamma_t)=\Pi(t,s)F_{\frac{\gs^2}{K}}(\gamma_s),
\end{equation}
one of the eigenvalues of $Q(s)$ is simply $0$, with corresponding eigenvector $F_{\frac{\gs^2}{K}}(\gamma_0)$. By hypothesis $\gamma$ is stable and thus the other eigenvalue is positive (see \cite{teschl2012ordinary}, Chapter 12), we denote it $\lambda$ and $e_0$ a corresponding eigenvector. Define $N(t,0)=\Pi(t,0)e^{ t Q(0)}$, so that $P(t,0)=N(t,0)e^{- tQ(0)}$. $N(t,0)$ is clearly periodic: $N(t+T_\gamma,0)=N(t,0)$.

We can now define
\begin{equation}
Q(s)=\Pi(s,0)Q(0)\Pi(0,s),\quad \text{and} \quad N(t,s)=\Pi(t,t-s) N(t-s,0) \Pi(0,s),
\end{equation}
so that
\begin{equation}
\Pi(t,s)=N(t,s)e^{-(t-s) Q(s)}, 
\end{equation}
and $N$ satisfies $N(t+T_\gamma,s)=N(t,s)$ and $Q(s)$ has the same spectral decomposition as $Q(0)$. In particular $e_s=N(s,0)e_0$ is an eigenvector associated with the eigenvalue $\gl$ for $Q(s)$. It is easy to see that $Q(t)=\Pi(t,s)Q(s)\Pi(s,t)$ and $N(t,u)N(u,s)=N(t,s)$.

We consider $p_s$ and $p^\perp_s$ the linear form associated to the projection on $F_{\frac{\gs^2}{K}}$ and $e_s$ respectively, i.e. $p_s,p_s^\perp:\bbR^2\rightarrow\bbR$ and satisfies $p_s(e_s)=p_s^\perp(F_{\frac{\gs^2}{K}}(\gamma_s))=0$ and $p_s(F_{\frac{\gs^2}{K}}(\gamma_s))=p_s^\perp(e_s)=1$. It is easy to see that $p^\perp_t(N(t,s)\cdot)=p^\perp_s(\cdot)$ and that, for all $t>s$,
\begin{equation}\label{eq:contract p perp_1}
\left|p_t^\perp\left(\Pi(t,s)u\right)\right|\leq e^{-\gl (t-s) }p^\perp_s(u).
\end{equation}
We consider also a constant $C_\Pi$ such that $|\Pi(t,s)u|\leq C_\Pi |u|$ for all $t>s$.

\medskip

For $\ga>0$ and all $s\in\left[0,T_\gamma\right)$, we define $E_s(\ga)$ as
\begin{equation}
E_s(\ga):=\{ u\in \gamma_s+\text{span}(e_s):\, \left| p^\perp_s(u-\gamma_s)\right|\leq \ga\}.
\end{equation} 

\subsection{ The general case $ \delta>0$}
\label{sec:def_general_delta}
All that have been exposed in the previous paragraph for the limit cycle $ \gamma^{ 1}$ in the case $ \delta=1$ can be easily transposed for $ \gamma^{ \delta}$ when $ \delta>0$, with the following definitions: $ \Pi^{ \delta}(t, s) := \Pi^{ 1}(\delta t, \delta s)$, $ Q^{ \delta}(s):= \delta Q^{ 1}( \delta s)$, $N^{ \delta}(t, s)= N^{ 1}( \delta t, \delta s)$, $ e_{ s}^{ \delta}:= e_{  \delta s}^{ 1}$, $p_{ s}^{ \delta}:= p^{ 1}_{ \delta s}$, $p_{ s}^{ \delta, \perp}:= p_{ \delta s}^{ 1, \perp}$ and
\begin{equation}
E^{ \delta}_s(\ga):=\{ u\in \gamma^{ \delta}_s+\text{span}(e^{ \delta}_s):\, \left| p^{ \delta, \perp}_s(u-\gamma^{ \delta}_s)\right|\leq \ga\},\ s\in[0, T_{ \gamma}/ \delta)\, .
\end{equation} 

Note in particular the spectral gap $ \lambda$ when $ \delta=1$ is changed into $ \delta \lambda$: \eqref{eq:contract p perp_1} becomes
\begin{equation}\label{eq:contract p perp_delta}
\left|p_t^{ \delta, \perp}\left(\Pi^{ \delta}(t,s)u\right)\right|\leq e^{-\gl \delta(t-s) }p^{ \delta, \perp}_s(u).
\end{equation}

We now give a classical result of projection $\gamma$. For a similar result in a more general situation, see for example \cite{sell2013dynamics}. 

\begin{lemma}\label{lem:proj}
There exist $\ga_0>0$ such that for all $z$ in the $\ga_0$-neighborhood of $\gamma^{ \delta}$, there exists a unique $\theta=:\proj^{ \delta}(z) \in \mathbb{ S}_{ \delta}$ such that $z\in E^{ \delta}_\theta(1)$. 
Moreover, for all $z, h$ such that $z$ and $z+h$ are in the $ \alpha_{ 0}$-neighborhood of $ \gamma^{ \delta}$, if $ \theta^{ \delta}= \proj^{ \delta}(z)$,
\begin{equation}
\label{eq:expansion proj}
\proj^{ \delta}(z+h)=\theta^{ \delta}+ \frac{ 1}{ \delta}\frac{p^{ \delta}_{\theta^{ \delta}}(h)}{1+p^{ \delta}_{\theta^{ \delta}}\left(\left(DF_{\frac{\gs^2}{K}}(\gamma^{ \delta}_{\theta^{ \delta}})+ \frac{ 1}{ \delta}Q^{ \delta}(\theta^{ \delta})\right)(z-\gamma^{ \delta}_{\theta^{ \delta}})\right)}+ \frac{ 1}{ \delta}O(h^2)\ ,
\end{equation}
where the rest $O(h^2)$ is uniform in $ \delta\leq 1$. In particular, under the previous hypotheses, there exists a constant $C_{ \proj}>0$, independent of $ \delta$, such that
\begin{equation}
\label{eq:proj_lip}
d_{ \mathbb{ S}_{ \delta}} \left(\proj^{ \delta}(z+h), \proj^{ \delta}(z)\right)\leq \frac{ C_{ \proj}}{ \delta} \left\vert h \right\vert\, .
\end{equation}
\end{lemma}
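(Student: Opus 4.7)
The plan is to apply the implicit function theorem to the scalar defining function
\[
\Phi^\delta(\theta, z) := p^{\delta}_\theta(z - \gamma^\delta_\theta),
\]
since $z \in E^\delta_\theta(1)$ is equivalent to $\Phi^\delta(\theta, z) = 0$ together with $|p^{\delta,\perp}_\theta(z - \gamma^\delta_\theta)| \leq 1$. First I would compute
\[
\partial_\theta \Phi^\delta(\theta, z) = (\partial_\theta p^\delta_\theta)(z - \gamma^\delta_\theta) - p^\delta_\theta(\partial_\theta \gamma^\delta_\theta) = (\partial_\theta p^\delta_\theta)(z - \gamma^\delta_\theta) - \delta,
\]
using $\partial_\theta \gamma^\delta_\theta = \delta F_{\frac{\gs^2}{K}}(\gamma^\delta_\theta)$ and $p^\delta_\theta(F_{\frac{\gs^2}{K}}(\gamma^\delta_\theta)) = 1$. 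The time-change identities $p^\delta_\theta = p^1_{\delta\theta}$, $\gamma^\delta_\theta = \gamma^1_{\delta\theta}$, $e^\delta_\theta = e^1_{\delta\theta}$ and $Q^\delta(\theta) = \delta Q^1(\delta\theta)$ reduce every estimate to the compact cycle $\gamma^1$; in particular $\|\partial_\theta p^\delta_\theta\|$ is $O(\delta)$ uniformly in $\theta \in \mathbb{S}_\delta$ and in $\delta \in (0,1]$. Consequently $\alpha_0 > 0$ can be chosen independently of $\delta$ so that $|\partial_\theta \Phi^\delta(\theta, z)| \geq \delta/2$ on the $\alpha_0$-tubular neighborhood of $\gamma^\delta$, and the implicit function theorem then yields local smoothness and uniqueness of $\proj^\delta$. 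Global uniqueness on this neighborhood follows from a standard tubular-neighborhood argument, noting that $\gamma^\delta$, as a subset of $\mathbb{R}^2$, coincides with $\gamma^1$ and so $\proj^\delta(z) = \proj^1(z)/\delta$.

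For the expansion formula, since $\partial_z \Phi^\delta = p^\delta_\theta$ (independent of $z$), the implicit function theorem gives
\[
d_z \proj^\delta(z) \cdot h = -\frac{p^\delta_\theta(h)}{(\partial_\theta p^\delta_\theta)(z - \gamma^\delta_\theta) - \delta}.
\]
The key algebraic input is the identity
\[
\partial_\theta p^\delta_\theta = -\delta\, p^\delta_\theta \circ DF_{\frac{\gs^2}{K}}(\gamma^\delta_\theta) - p^\delta_\theta \circ Q^\delta(\theta),
\]
which I would verify by testing both sides on the basis $\left\{ F_{\frac{\gs^2}{K}}(\gamma^\delta_\theta), e^\delta_\theta \right\}$: on the first vector it reduces to differentiating $p^\delta_\theta(F_{\frac{\gs^2}{K}}(\gamma^\delta_\theta)) = 1$ and using $Q^\delta(\theta) F_{\frac{\gs^2}{K}}(\gamma^\delta_\theta) = 0$; on the second it reduces to differentiating $p^\delta_\theta(e^\delta_\theta) = 0$ together with $\partial_\theta e^\delta_\theta = \delta DF_{\frac{\gs^2}{K}}(\gamma^\delta_\theta) e^\delta_\theta + Q^\delta(\theta) e^\delta_\theta$, which itself comes from $\partial_t N(t,0) = (DF_{\frac{\gs^2}{K}}(\gamma_t) + Q(t)) N(t,0)$ combined with $e^\delta_\theta = e^1_{\delta\theta}$. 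Substituting this identity into the denominator factors $-\delta$ out and produces exactly the numerator and denominator appearing in \eqref{eq:expansion proj}; the $\frac{1}{\delta} O(h^2)$ remainder is then a second-order Taylor expansion, where the $1/\delta$ prefactor reflects $\proj^\delta = \proj^1/\delta$ and the fact that $d^2 \proj^1$ is uniformly bounded on the neighborhood of $\gamma^1$.

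The Lipschitz bound \eqref{eq:proj_lip} finally follows by integrating $d_z \proj^\delta$ along the segment $[z, z+h]$: for $\alpha_0$ small enough the denominator in the derivative formula remains $\geq 1/2$, while $\|p^\delta_\theta\| = \|p^1_{\delta\theta}\|$ is bounded uniformly on the compact cycle $\gamma^1$, giving $\|d_z \proj^\delta\| \leq C_{\proj}/\delta$. The main obstacle, beyond the routine implicit-function-theorem bookkeeping, is to ensure uniformity of every constant in $\delta$; this is handled throughout by the time-change identities above, which reduce every estimate to the fixed case $\delta = 1$.
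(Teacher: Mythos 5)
Your proposal is correct and follows essentially the same implicit-function-theorem strategy as the paper: the key derivative identity you establish, $\partial_\theta p^\delta_\theta = -\delta\, p^\delta_\theta \circ DF_{\sigma^2/K}(\gamma^\delta_\theta) - p^\delta_\theta \circ Q^\delta(\theta)$, is precisely what the paper obtains by differentiating $p_\theta = p_0 \circ N(0,\theta)$ via $\partial_\theta N(0,\theta) = -N(0,\theta)\bigl(DF_{\sigma^2/K}(\gamma_\theta) + Q(\theta)\bigr)$, only you verify it by testing on the basis $\{F_{\sigma^2/K}(\gamma^\delta_\theta), e^\delta_\theta\}$ instead, which is an equivalent route. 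The remaining steps — non-degeneracy of $\partial_\theta \Phi^\delta$, factoring out $-\delta$ in the IFT differential, the $\frac{1}{\delta}O(h^2)$ remainder, and the uniformity-in-$\delta$ argument via $\proj^\delta = \proj^1/\delta$ — all match the paper's reasoning.
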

\begin{proof} Assume first that the result holds in the case $ \delta=1$. Then it is easy to see that the unique candidate for the projection in the case $ \delta>0$ is given by $ \proj^{ \delta}(z):= \frac{ \proj^{ 1}(z)}{ \delta}\in \mathbb{ S}_{ \delta}$. In particular, one deduces \eqref{eq:expansion proj} and \eqref{eq:proj_lip} from the case $ \delta=1$ and the change of variables formulas in \S~\ref{sec:def_general_delta}. We now prove the result for $ \delta=1$, which basically follows from the Implicit Functions Theorem. Let 
\begin{equation}
(z, \theta)\in \mathbb{ R}^{ 2}\times \mathbb{ S}_{ \delta}\mapsto f(z, \theta):= p_{ \theta} \left(z- \gamma_{ \theta}\right)\, .
\end{equation}
Relying on the identities $p_\theta(z-\gamma_\theta)=p_0(N(0,\theta)(z-\gamma_\theta))$ and
\begin{multline}
\frac{\dd}{\dd \theta}N(0,\theta)=\frac{\dd}{\dd \theta} (N(\theta,0)^{-1})=- N(0,\theta) \left(DF_{\frac{\gs^2}{k}}(\gamma_\theta)N(\theta,0)+N(\theta,0)Q(0) \right) N(0,\theta)
\\=-N(0,\theta) \left( DF_{\frac{\gs^2}{K}}(\gamma_\theta)+ Q(\theta)\right),
\end{multline}
we obtain
\begin{multline}
\partial_\theta f(z, \theta)=-p_\theta\left(F_{\frac{\gs^2}{k}}(\gamma_\theta)\right)- p_\theta\left(\left( DF_{\frac{\gs^2}{K}}(\gamma_\theta)+ Q(\theta)\right)(z-\gamma_\theta) \right)\\
= -1 -  p_\theta\left(\left( DF_{\frac{\gs^2}{K}}(\gamma_\theta)+ Q(\theta)\right)(z-\gamma_\theta) \right).
\end{multline}
Taking $z_0=\gamma_\theta +\ga e_\theta$ with $\ga$ small enough, we have $f(z_0,\gamma_\theta)=0$ and $\partial_\theta f(z_0,\gamma_\theta)\neq 0$, so that the existence, local uniqueness and smoothness of the projection $\proj(z)$ in a tubular neighborhood of $\gamma$ follows from the Implicit functions Theorem and the compactness of $\gamma$. Moreover, denoting $\theta=\proj(z)$ and $\theta_h=\proj(z+h)-\proj(z)$, we have $\theta_h=O(h)$ by smoothness of the projection, and
\begin{multline}
p_{\theta+\theta_h}(z+h-\gamma_{\theta+\theta_h})=0\\
=p_\theta(z+h-\gamma_\theta)-\theta_hp_\theta(\dot \gamma_\theta)-\theta_h p_\theta\left(\left(DF_{\frac{\gs^2}{K}}(\gamma_\theta)+ Q(\theta)\right)(z-\gamma_\theta)\right)+O(h^2)\\
= p_\theta(h)-\theta_h\left( 1+p_\theta\left(\left(DF_{\frac{\gs^2}{K}}(\gamma_\theta)+ Q(\theta)\right)(z-\gamma_\theta)\right)\right)+O(h^2),
\end{multline}
which implies \eqref{eq:expansion proj}.
\end{proof}
In the following $ \gamma= \gamma^{ \delta}$ stands for the limit cycle for $ \delta>0$. We will denote
\begin{equation}
\dist_{\Pi}^{ \delta}(z,\gamma):=\left\vert p^{ \delta, \perp}_{\proj(z)}\left( z-\gamma_{\proj(z)}\right)\right\vert.
\end{equation}
Now a variation of constants (see \cite{teschl2012ordinary}, page 84) shows that the solution to
\begin{equation}
\dot z_t=\gd DF_{\frac{\gs^2}{K}}(\gamma_{\theta_0+t})z_{ t} + g(t),
\end{equation} 
for a smooth mapping $g(t)$, satisfies
\begin{equation}
\label{eq:var_const_z}
z_{t}= \Pi^{ \delta}(\theta_0+ t,\theta_0)z_{t}+\int_{0}^t \Pi^{ \delta}(\theta_0+ t,\theta_0+s)g(s)\dd s.
\end{equation}
So, since
\begin{equation}
\dot m_t-\dot \gamma_{\theta_0+t}=\gd \int_{\bbR^2} F(z+m_t)\dd \tilde \mu_t(z)-\gd \int_{\bbR^2} F(z+\gamma_{\theta_0+t})q_0(z)\dd z,
\end{equation}
we have
\begin{equation}\label{eq:mild mt-gammat}
m_t-\gamma_{ \theta_{ 0}+t}=\Pi^{ \delta}(\theta_0+t,\theta_0)(m_0-\gamma_0)+\gd \int_0^t \Pi^{ \delta}(\theta_0+t,\theta_0+s) g(s),
\end{equation}
with
\begin{multline}
g(s)=F_{\frac{\gs^2}{K}}(m_s)-F_{\frac{\gs^2}{K}}(\gamma_{\theta_0+s})-DF_{\frac{\gs^2}{K}}(\gamma_{\theta_0+s})(m_s-\gamma_{\theta_0+s})\\+\int_{\bbR^2}F(z+m_s)\dd \tilde  \mu_s(z)-\int_{\bbR^2}F(z+m_s)q_0(z)\dd z.
\end{multline}

For an initial condition $\mu_0$ of \eqref{eq:PDER kinetic FhN} with $m_0=\int_{\bbR^2}z\dd \mu_0(z)\in E^{ \delta}_\theta(1)$, we define $T_{r,\theta}(\mu_0)$ the return time
\begin{equation}
T_{r,\theta}(\mu_0)=\inf\{t>0:\, m_t\in E^{ \delta}_\theta(1)\}.
\end{equation}

\begin{lemma}\label{lem:proj on gamma}
There exist positive constants $\kappa_2$ and $\gd_2$ such that for any $ \mu_{ 0}$ satisfying the hypotheses of Lemma \ref{lem:moments X Y} and Lemma \ref{lem:close to q0}, the following is true: if $\gd\leq \gd_2$ and $\dist^{ \delta}_\Pi(m_0,\gamma)\leq \kappa_2\gd$, then
\begin{equation}
\sup_{t\geq 0} \dist^{ \delta}_\Pi(m_t,\gamma)\leq \kappa_2\gd.
\end{equation}
In particular $T_e(R_0)=\infty$.
\end{lemma}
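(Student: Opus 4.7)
My plan is to argue by a bootstrap / continuation scheme. Define
\begin{equation*}
T^\ast := \inf\{t\in [0,T_e(R_0)] : \dist_{\Pi}^{\gd}(m_t,\gamma) > \kappa_2 \gd\},
\end{equation*}
and aim for a contradiction assuming $T^\ast<\infty$. Under the running ansatz $\dist^\gd_\Pi(m_s,\gamma)\leq \kappa_2\gd$ on $[0,T^\ast]$, the inclusion $\gamma^\gd\subset B(0,R_0)$ together with Lemma~\ref{lem:proj} would force $m_s\in B(0,R_0)$ for $\gd$ small; hence $T^\ast\leq T_e(R_0)$, and Lemma~\ref{lem:moments X Y} and Lemma~\ref{lem:close to q0} would both apply throughout $[0,T^\ast]$, furnishing uniform moment bounds and the key Gaussian-approximation estimate $W(\tilde\mu_s,q_0)\leq \kappa_1\gd$.

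The main input is then extracted from the mild formulation \eqref{eq:mild mt-gammat}. I would split the nonlinearity $g(s)$ into a Taylor remainder of $F_{\gs^2/K}$, bounded by $C|m_s-\gamma^\gd_{\theta_0+s}|^2=O(\gd^2)$ under the ansatz, and a Gaussian-approximation error $\int F(z+m_s)\dd\tilde\mu_s(z)-\int F(z+m_s)q_0(z)\dd z$. Expanding the cubic component of $F$, this second piece reduces to a linear combination of the moment differences $\int z_1^2\dd\tilde\mu_s - \gs^2/K$ and $\int z_1^3\dd\tilde\mu_s$, both of which I would control via an $L^2$-coupling of $\tilde\mu_s$ to $q_0$, combining the moment bounds of Lemma~\ref{lem:moments X Y} with the centered-Wasserstein bound from Lemma~\ref{lem:close to q0}. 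This yields $|g(s)|=O(\gd)$ throughout $[0,T^\ast]$.

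A naive application of \eqref{eq:mild mt-gammat} with a fixed global basepoint would fail because the parallel direction in the Floquet decomposition is marginally stable and accumulates linear drift; I would therefore iterate on sub-intervals of length $T_1=O(1)$. Set $t_k:=kT_1$, $\theta_k:=\proj^\gd(m_{t_k})$, write $D(t):=\dist^\gd_\Pi(m_t,\gamma)$, and apply \eqref{eq:mild mt-gammat} on $[t_k,t_{k+1}]$ with basepoint $\theta_k$. The Floquet contraction \eqref{eq:contract p perp_delta}, combined with $|g|=O(\gd)$ and the $\gd$-prefactor, gives
\begin{equation*}
\left|p_{\theta_k+T_1}^{\gd,\perp}(m_{t_{k+1}}-\gamma^\gd_{\theta_k+T_1})\right|\leq e^{-\gl\gd T_1}D(t_k)+C_1\gd^2,
\end{equation*}
and the parallel component at $t_{k+1}$ is likewise $O(\gd^2)$ (starting from $p_{\theta_k}^\gd$-orthogonality at $t_k$ and using invariance of the parallel projection under $\Pi^\gd$). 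Re-projecting via the expansion \eqref{eq:expansion proj} produces a phase correction $|\theta_{k+1}-\theta_k-T_1|=(1/\gd)\cdot O(\gd^2)=O(\gd)$, which by the smoothness of the Floquet frame generates at most an extra $O(\gd^2)$ error in the perpendicular distance; combining these,
\begin{equation*}
D(t_{k+1})\leq e^{-\gl\gd T_1}D(t_k)+C_2\gd^2.
\end{equation*}

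Choosing $\kappa_2:=2C_2/(\gl T_1)$, this discrete inequality has fixed point at most $\kappa_2\gd/2$, so iteration preserves $D(t_k)\leq \kappa_2\gd$ with a strict margin; a short-time estimate on each $[t_k,t_{k+1}]$ (whose transient growth is bounded by a multiplicative $1+O(\gd T_1)$ factor plus an additive $O(\gd^2)$) extends the bound to all $t$. This strictly improves the ansatz at $t=T^\ast$, contradicting the definition of $T^\ast$ for $\gd\leq \gd_2$ small enough, hence $T^\ast=T_e(R_0)$. Since $D(t)\leq \kappa_2\gd$ keeps $m_t$ strictly inside $B(0,R_0)$, one also concludes $T_e(R_0)=\infty$. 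The most delicate part, I expect, will be the renewal of the basepoint at each $t_k$: quantifying precisely how the re-projection interacts with the spectral contraction so that the injected perpendicular error remains at the $O(\gd^2)$ scale, which ultimately rests on the smoothness of the Floquet frame $(e^\gd,p^{\gd,\perp})$ and on the $1/\gd$ scaling in \eqref{eq:expansion proj}.
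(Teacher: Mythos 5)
Your proposal is correct in its key ingredients (moment bounds, the centered-Wasserstein closeness to $q_0$, the mild formulation \eqref{eq:mild mt-gammat}, the Floquet contraction \eqref{eq:contract p perp_delta}, and the re-projection expansion \eqref{eq:expansion proj}), but it is organized around a genuinely different iteration scheme. The paper keeps the Floquet base point $\theta_0$ frozen for an entire quasi-period: it bounds $|m_t-\gamma_{\theta_0+t}|$ directly on $[0,2T_\gamma/\gd]$ (where the parallel drift accumulates only to $O(\gd)$ because the drift rate is $O(\gd^2)$), then uses the linearized contraction inequality $e^{-\gl u}+\eta u\le 1$ for $u\in[0,2T_\gamma]$ together with a single re-projection estimate $O(\kappa_2^2\gd^3 t)$ to get a pointwise bound $\dist_\Pi(m_t,\gamma)\le(1-\tfrac{\eta}{2}\gd t+C_4\kappa_2\gd^2 t)\kappa_2\gd$, and finally recurses at the return time $T_{r,\theta}(\mu_0)\approx T_\gamma/\gd$, where the Floquet contraction over one period is a genuine constant $e^{-\gl T_\gamma}<1$. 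You instead re-project at every $O(1)$ step $T_1$ and run a discrete Gr\"onwall iteration with contraction factor $e^{-\gl\gd T_1}=1-O(\gd)$; this works — the fixed point of your recursion is $O(\gd)$ as needed — but it forces you to carry a near-unity contraction factor through the whole argument, and the decisive comparison (contraction benefit $O(\gd^2 T_1)$ versus accumulated nonlinear error $O(\gd^2 T_1)$ versus re-projection error $O(\gd^3)$) is exactly the same constant-vs-constant balance that the paper packages once via the choice of $\eta$. So your suspicion that "a naive application of \eqref{eq:mild mt-gammat} with a fixed global basepoint would fail" is slightly overstated: it does work for one full quasi-period, because the period is $O(1/\gd)$ while the parallel drift rate is $O(\gd^2)$, and that is precisely what the paper exploits. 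Two things to watch if you pursue your route: (i) your constant $C_2$ in $D(t_{k+1})\le e^{-\gl\gd T_1}D(t_k)+C_2\gd^2$ hides a $\kappa_2$-dependent Taylor remainder $O(\gd^3\kappa_2^2)$, so the choice $\kappa_2=2C_2/(\gl T_1)$ is circular unless you first fix $\kappa_2$ from the $\kappa_2$-independent part of the error and then shrink $\gd$, which is exactly the ordering of quantifiers in the paper; (ii) the short-time extension between grid points needs the perpendicular distance to enter $[0,T_1]$ with a strict margin below $\kappa_2\gd$, which your discrete iteration only delivers after the transient, so the first interval $[0,T_1]$ requires a slightly more careful pointwise argument of the type in \eqref{eq: first bound proj mt}--\eqref{eq: third bound proj mt}. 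Neither is a gap in the idea, but both are places where the paper's one-period formulation avoids an extra layer of bookkeeping.
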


\begin{proof}
First remark that, by regularity of $F$ and compactness of the limit cycle $ \gamma$, there exists positive constants $C_1$ and $\ga_F$ such that for all $s\in \left[0,T_\gamma/ \delta\right]$ and $m$ such that $|m-\gamma_s|\leq \ga_F$, we have
\begin{equation}
\label{eq:diffF}
\left|F_{\frac{\gs^2}{K}}(m)-F_{\frac{\gs^2}{K}}(\gamma_s)-DF_{\frac{\gs^2}{K}}(\gamma_s)(m-\gamma_s) \right|\leq C_1 |m-\gamma_s|^2.
\end{equation}
Consider a $\kappa_2>0$ and suppose that $\gd$ is small enough such that $(1+ C_{\Pi})\kappa_2 \gd < \ga_F$. Suppose then that $m_0\in E^{ \delta}_{\theta_0}(\kappa_2\gd)$ for $ \theta_0= \proj(m_{ 0})\in \mathbb{ S}_{ \delta} $ and consider the time
\begin{equation}
t_1:=\inf\{t\geq 0:\, |m_t-\gamma_{\theta_0+ t}|\geq (1+ C_{ \Pi}) \kappa_2 \gd \}.
\end{equation}
Since $\gamma$ is strictly included in $B(0,R_0)$, for $\gd $ taken small enough we have $ t_1\leq T_e(R_0)$. For $t\leq t_1$ we obtain
\begin{equation}
|m_t-\gamma_{\theta_0+t}|\leq C_\Pi |m_0-\gamma_{\theta_0}|+ C_\Pi \gd \int_0^t |g(s)|\dd s,
\end{equation}
and
\begin{equation}
|g(s)|\leq C_{ 1}|m_s-\gamma_{\theta_0+ s}|^2 +\left|\int_{\bbR^2} F(z+m_s)\dd \tilde \mu_s(z)-\int_{\bbR^2}F(z+m_s)q_0(z)\dd z\right|.
\end{equation}
Now, recalling the notations used in the proof of Lemma \ref{lem:close to q0}, we have
\begin{multline}\label{eq:decomp diff integrales}
\left|\int_{\bbR^2} F(z+m_s)\dd \tilde \mu_s(z)-\int_{\bbR^2}F(z+m_s)q_0(z)\dd z\right|^2\\=\left( \bE\left[X_s-\frac{X_s^3}{3}-Y_s\right]
-\bE\left[Z^x_s+ x_s-\frac{(Z^x_s+x_s)^3}{3}-Z^y_s-y_s\right]\right)^2\\
+\left( \bE\left[X_s+a-bY_s\right]-\bE\left[Z^x_s+x_s+a-Z^y_s-y_s\right]\right)^2\, ,
\end{multline}
where $ \left\lbrace \left(X_{ t}, Y_{ t}\right), \left(Z_{ t}^{ x}, Z_{ t}^{ y}\right)\right\rbrace$ is the $ \varepsilon$-coupling introduced in Lemma~\ref{lem:close to q0}.
First, note that, by \eqref{eq:control_X_Zx}, we have for $t\leq t_{ 1}\leq T_{ e}(R_{ 0})$, $\bE[X_t-x_t-Z^x_t]^2 \leq \mathbf{ E} \left[ \left\vert \tilde X_{ t} - Z_{ t}^{ x}\right\vert^{ 2}\right]\leq  \kappa_1^2\gd^2 + \varepsilon$.  Moreover, using Lemma~\ref{lem:moments X Y} and Lemma~\ref{lem:close to q0}, 
\begin{align*}
\bE[X_t^3-(Z^x_t+x_t)^3]&= \mathbf{ E} \left[ \tilde X_{ t}^{ 3} - \left(Z_{ t}^{ x}\right)^{ 3}\right]+3x_{ t}\mathbf{ E}\left[ \tilde X_{ t}^{ 2} - \left(Z_{ t}^{ x}\right)^{ 2}\right],\\
&\leq C_{R_0,\gs^2/K} \mathbf{ E} \left[ \left\vert \tilde X_{ t} - Z_{ t}^{ x} \right\vert^{ 2}\right]^{ \frac{ 1}{ 2}}\leq C_2 \left( \delta^{ 2}+ \varepsilon\right)^{ \frac{ 1}{ 2}},
\end{align*}
for some constant $C_2>0$, independent of $ \varepsilon$. This altogether implies that, for some constant $C_3>0$,
\begin{equation}
|g(s)|\leq C_1|m_s-\gamma_{\theta_0+s}|^2 +C_3 \left( \delta^{ 2}+ \varepsilon\right)^{ \frac{ 1}{ 2}}.
\end{equation}
Taking $ \varepsilon \searrow 0$ ($g(s)$ does not depend on $ \varepsilon$), one obtains
\begin{equation}
|g(s)|\leq C_1|m_s-\gamma_{\theta_0+s}|^2 +C_3 \delta.
\end{equation}
We deduce that, for $t\leq t_1 $, since $m_{ 0}- \gamma_{ \theta_0}= p_{ \theta_0}^{ \perp}(m_{ 0}- \gamma_{ \theta_0})$,
\begin{equation}\label{eq:first bound m-gamma}
|m_t-\gamma_{\theta_0+ t}|\leq C_{ \Pi}\kappa_2 \gd   \left(1+ \gd t \left(C_1(1+C_{ \Pi})^{ 2}\kappa_2 \gd + \frac{C_3}{\kappa_2}\right)\right),
\end{equation}
and thus, choosing $\kappa_2 \geq 4C_3C_\Pi T_\gamma$ and $\gd\leq \frac{1}{4T_\gamma C_1\kappa_2 C_{ \Pi}(1+C_{ \Pi})^{ 2}}$ we obtain that 
\begin{equation}
\label{eq:t1_T}
t_1 \geq \frac{2 T_\gamma}{\gd}\, .
\end{equation}
Note that choosing also $ \delta \leq \frac{ \alpha_{ 0}}{ (1+ C_{ \Pi}) \kappa_{ 2}}$, (recall the definition of $ \alpha_{ 0}$ in Lemma~\ref{lem:proj}) ensures that $m_{ t}$ stays in the $ \alpha_{ 0}$-neighborhood of the limit cycle $ \gamma$. In particular, the projection 
\begin{equation}
\theta_t=\theta^\gd_t=\proj^\gd(m_t)
\end{equation}
is well-defined for all $t\in [0, \frac{ 2 T_{ \gamma}}{ \delta})$.

Let us now prove that $\dot \theta^\gd_t=1+O(\gd)$ for $t\leq t_1$, which implies in particular that $T_{r,\theta}(\mu_0)\in \left[\frac{T_\gamma}{\gd}-c_1,\frac{T_\gamma}{\gd}+c_1\right]$ for some $c_1>0$ that does not depend on $\gd$ (but may depend on $\kappa_2$). Now, recalling Lemma \ref{lem:proj}, for $s\leq t_1$ we have
$\dot\theta_s=\frac{1}{\gd}p_{\theta_s}(\dot m_s)+O(\gd)$, with
\begin{equation}
p_{\theta_s}(\dot m_s)= \delta+p_{\theta_s}(\dot m_s)-p_{\theta_s}\left(\delta F_{\frac{\gs^2}{K}}(\gamma_{\theta_s})\right),
\end{equation}
and
\begin{align}
\frac{ 1}{ \delta} \left(\dot m_s- \delta F_{\frac{\gs^2}{K}}(\gamma_{\theta_s})\right)
&=F_{\frac{\gs^2}{K}}(\gamma_{\theta_0+ s})- F_{\frac{\gs^2}{K}}(\gamma_{\theta_s})+ F_{\frac{\gs^2}{K}}(m_s)-F_{\frac{\gs^2}{K}}(\gamma_{\theta + s})\nonumber\\
&\qquad \qquad + \int_{\bbR^2}F(z+m_s)\dd \tilde \mu_s(z)- \int_{\bbR^2} F(z+m_s)q_0(z)\dd z\nonumber\\
&= F_{\frac{\gs^2}{K}}(\gamma_{\theta_0+ s})- F_{\frac{\gs^2}{K}}(\gamma_{\theta_s})+O(\gd),
\end{align}
where we have used the fact that $t\leq t_1$ and $F_{\frac{\gs^2}{K}}$ is smooth, and similar arguments as above to tackle the difference of integral terms.
Moreover, by \eqref{eq:proj_lip} and the definition of $ \gamma=\gamma^{ \delta}$, $\vert\gamma_{\theta_0 + s}-\gamma_{\theta_s}\vert=\vert\gamma^{ 1}_{ \delta(\theta_0 + s)}-\gamma^{ 1}_{ \delta \theta_s}\vert=\vert \gamma^{ 1}_{ \proj^{1}(\gamma_{\theta_0+s})}-\gamma^{ 1}_{ \proj^{1}(m_s)}\vert \leq C_{ \proj, \gamma}|m_s-\gamma_{\theta_0+ s}|$ for some constant $C_{ \proj, \gamma}$. By \eqref{eq:t1_T}, this last quantity is of order $O( \kappa_{ 2} \delta)$, uniformly in $s\in [0, \frac{ 2T_{ \gamma}}{ \delta})$. We obtain $p_{\theta_s}(\dot m_s)=\gd +O(\gd^2)$, and thus $\dot \theta_s=1+O(\gd)$.

It remains now to prove that $|m_{t}-\gamma_{\proj(m_t)}|\leq \kappa_2 \delta$ for all $t\leq T_{r,\theta}(\mu_0)$, taking a larger value for $\kappa_2$ if needed. But we have on one hand, recalling \eqref{eq:contract p perp_delta}, \eqref{eq:mild mt-gammat} and the estimates made above,
\begin{equation}
\left|p_{\theta_0+ t}^\perp\left(m_{t}-\gamma_{\theta_0+t}\right)\right|\leq \kappa_2 \gd \left( e^{-\gl \gd  t}+C_\Pi \gd t\left(C_1(1+C_{ \Pi})^{ 2}\kappa_2 \gd +\frac{C_3}{\kappa_2} \right)\right).
\end{equation}
Now, remarking that, for $ \eta = \frac{1-e^{-2\gl T_\gamma}}{2T_\gamma}$, $e^{-\gl u}+ \eta u\leq 1$ for $u\in [0,2T_\gamma]$, we have
\begin{equation}\label{eq: first bound proj mt}
\left|p_{\theta_0+ t}^\perp\left(m_{t}-\gamma_{\theta_0+t}\right)\right|\leq \left(1-\frac{ \eta}{2} \gd t\right)\kappa_2 \gd, 
\end{equation}
if we take $\kappa_2\geq \frac{4C_3C_\Pi}{ \eta}$ and $\gd \leq \frac{ \eta}{4 C_1 C_\Pi(1+ C_{ \Pi})^{ 2} \kappa_2}$.

On the other hand, using the fact that $p^\perp_{\theta_t}(\dot \gamma_{\theta_t})=0$, that for all $w$ 
\begin{equation*}
\left\vert\left( p^{ \delta, \perp}_{\theta_t}-p^{ \delta, \perp}_{\theta_0+t}\right)(w)\right\vert=\left\vert\left( p^{ 1, \perp}_{ \delta \theta_t}-p^{ 1, \perp}_{ \delta\theta_0+ \delta t}\right)(w)\right\vert \leq \delta C_{\proj,\gamma} \left\vert w \right\vert \left\vert \theta_0 + t - \theta_{ t} \right\vert,
\end{equation*} and the fact that $|m_t-\gamma_{\theta_0+ t}|\leq (1+C_{ \Pi})\kappa_2\gd$, we have 
\begin{multline}\label{eq: second bound proj mt}
\left\vert p^\perp_{\theta_t}(m_t-\gamma_{\theta_t})-p^\perp_{\theta_0+ t}(m_t-\gamma_{\theta_0+ t})\right\vert \leq \left\vert p^\perp_{\theta_t}(\gamma_{\theta_0+t}-\gamma_{\theta_t})\right\vert+\left\vert\left( p^\perp_{\theta_t}-p^\perp_{\theta_0+t}\right)(m_t-\gamma_{\theta_0+t})\right\vert\\
\leq C_{\proj,\gamma} \delta^{ 2}\left(\vert \theta_0+ t-\theta_t\vert ^2 + \kappa_2 \vert \theta_0+ t-\theta_t\vert  \right).
\end{multline}
If $v_t:= \theta_t-(\theta_0+ t)$ we have, using \eqref{eq:expansion proj}
\begin{align}
\delta\dot v_{ t} - p_{ \theta_{ t}}(\dot m_{ t}) + \delta&= p_{ \theta_{ t}}(\dot m_{ t}) \left(\frac{ 1}{ 1+ p_{ \theta_{ t}} \left( \left(DF_{  \frac{ \sigma^{ 2}}{ K}}( \gamma_{ \theta_{ t}}) + \frac{ 1}{ \delta}Q^{ \delta}(\theta_{ t})\right)(m_{ t} - \gamma_{ \theta_{ t}})\right)} - 1\right)
\end{align}
The term within the brackets is controlled by $C \left\vert m_{ t} - \gamma_{ \theta_{ t}}\right\vert$, for some universal constant $C>0$. By the regularity of $ t \mapsto \gamma^{ \delta}_{ t}= \gamma^{ 1}_{ t \delta}$ and the Lipchitz-continuity of $\proj(\cdot)$ (recall \eqref{eq:proj_lip}), there is a constant $C_{ \gamma, \proj}>0$
\begin{align*}
\left\vert \gamma_{ \theta_0 + t} - \gamma_{ \theta_{ t}}\right\vert & = \left\vert \gamma_{ \proj( \gamma_{ \theta_0+t})} - \gamma_{ \proj(m_{ t})}\right\vert \leq C_{  \gamma, \proj} \left\vert \gamma_{ \theta_0 + t} - m_{ t}\right\vert.
\end{align*} 
Hence, $\left\vert m_{ t} - \gamma_{ \theta_{ t}}\right\vert \leq \left\vert m_{ t} - \gamma_{ \theta_0 + t}\right\vert + \left\vert \gamma_{ \theta_0 + t} - \gamma_{ \theta_{ t}}\right\vert \leq C^{ \prime}_{ \gamma, \proj} \left\vert m_{ t} - \gamma_{ \theta_0+t}\right\vert$. Putting things together with $|m_t-\gamma_{\theta_0+ t}|\leq (1+C_{ \Pi})\kappa_2\gd$ (recall \eqref{eq:t1_T}), we obtain finally that
\begin{equation}
\vert \delta\dot v_t -p_{\theta_t}(\dot m_t)+\gd\vert \leq C_{\proj,\gamma} \kappa_2 \gd \vert p_{\theta_t}(\dot m_t)\vert .
\end{equation}

So recalling that $p_{\theta_t}(\dot m_t)=\gd +O(\kappa_2\gd^{ 2})$, we have $\dot v_t=O(\kappa_2\gd)$, and thus $v_t=tO(\kappa_2 \gd)$, since $v_{ 0}=0$. This means that for $t\in \left[0,\frac{2T_\gamma}{\gd}\right]$ and some positive constant $C_4$,
\begin{equation}\label{eq: third bound proj mt}
\left\vert p^\perp_{\theta_t}(m_t-\gamma_{\theta_t})-p^\perp_{\theta_0+t}(m_t-\gamma_{\theta_0+t})\right\vert \leq C_4\kappa_2^2\gd^3 t.
\end{equation}
Finally, we obtain, combining \eqref{eq: first bound proj mt} and \eqref{eq: third bound proj mt},
\begin{equation}
\dist_\Pi(m_t,\gamma)\leq \left(1-\frac{ \eta}{2}\gd t+C_4 \kappa_2 \gd^2 t\right)\kappa_2 \gd,
\end{equation}
which implies that $|m_{t}-\gamma_{\proj(m_t)}|\leq \kappa_2 \delta$ for all $t\leq T_{r,\theta}(\mu_0)$, taking $\gd\leq \frac{ \eta}{2C_4\kappa_2}$. This completes the proof, since $\mu_{T_{r,\theta}(\mu_0)}$ satisfies then the same hypotheses as $\mu_0$ (recall Lemma \ref{lem:moments X Y}, Lemma \ref{lem:close to q0} and the fact that $ T_{r,\theta}(\mu_0)\leq T_e(R_0)$), the result for all $t$ is obtained by recursion.
\end{proof}

For two solutions $(X_{1,t},Y_{1,t})$, $(X_{2,t},Y_{2,t})$ of \eqref{eq:kinetic FhN} with initial conditions $(X_{1,0},Y_{1,0})$ and $(X_{2,0},Y_{2,0})$, with respective distributions $\mu_{1,t}$ and $\mu_{2,t}$ and expectations $m_{1,t}=(x_{ 1, t}, y_{ 1, t})$ and $m_{2,t}=(x_{ 2, t}, y_{ 2, t})$, we will use the notations
\begin{multline}
\Delta X_t=X_{1,t}-X_{2,t},\qquad \Delta Y_t=Y_{1,t}-Y_{2,t}, \qquad \Delta \tilde X_t=\tilde X_{1,t}-\tilde X_{2,t}, \qquad \Delta \tilde Y_t=\tilde Y_{1,t}-\tilde Y_{2,t},\\
\Delta m_t=m_{1,t}-m_{2,t},\qquad \Delta \theta_t = \theta_{1,t}-\theta_{2,t}=\proj(m_{1,t})-\proj(m_{2,t}).
\end{multline}
We choose $T$ such that
\begin{equation}\label{hyp T}
e^{-\gl T}\leq \frac{1}{4}, \quad e^{-\frac{b}{c}T}\leq \frac{1}{16}.
\end{equation}

\begin{lemma}\label{lem:bound delta bar X delta bar Y}
There exist $ \delta_{ 3}>0$ and $ \kappa_{ 3}\geq 1$ such that for all $ \delta\leq \delta_{ 3}$, the following is true: if for $i=1, 2$, $ \int x^{ 6} {\rm d}\mu_{i, 0}\leq \kappa_{ 0}^{ x}$, $ \int y^{ 6} {\rm d}\mu_{i, 0}\leq \kappa_{ 0}^{ y}$ (recall Lemma~\ref{lem:moments X Y}), $W(\tilde \mu_{i,0},q_0)\leq \kappa_1\gd$ and $\dist_\Pi(m_{i,0},\gamma)\leq \kappa_2 \gd$, then for all $t\in \left[0,\frac{2T}{\gd}\right]$ we have
\begin{equation}
\label{eq:bound W2b delta mu}
W(\mu_{1,t},\mu_{2,t}) \leq \kappa_3 W(\mu_{1,0},\mu_{2,0})\, .
\end{equation}
Moreover, under the same assumptions, there exist positive constants $ \kappa_{ 4}, \kappa_{ 5}, \kappa_{ 6}$ such that for any coupling $ \left\lbrace (X_{1,0},Y_{1,0}), (X_{2,0},Y_{2,0})\right\rbrace \sim \pi_{ 0}\in \mathcal{ C}( \mu_{ 1, 0}, \mu_{ 2, 0})$ of the initial condition, the solutions $(X_{1,t},Y_{1,t})$, $(X_{2,t},Y_{2,t})$ of \eqref{eq:kinetic FhN} driven by the same Brownian motion satisfy
\begin{equation}
\left(\bE\left[ \left(\Delta \tilde X_t\right)^2\right]\right)^{\frac12}
\leq e^{-(K-\kappa_4 \gd)t}\left(\bE\left[ \left(\Delta \tilde X_0\right)^2\right]\right)^{\frac12}+ \kappa_5 \gd^{1-\beta} \Lambda(\pi_{ 0})\, ,\label{eq:contract bar X}
\end{equation}
and
\begin{multline}
\label{eq:contract bar Y}
\left( \bE\left[(\Delta \tilde Y_{t})^2\right]\right)^{\frac12}
\leq e^{-\frac{b\gd}{c}t}\left(\left( \bE\left[(\Delta \tilde Y_{0})^2\right]\right)^{\frac12}+\kappa_6\gd \left( \bE\left[(\Delta \tilde X_{0})^2\right]\right)^{\frac12} \right)+ \frac{\kappa_5}{b} \gd^{1-\beta} \Lambda(\pi_{ 0}).
\end{multline}
\end{lemma}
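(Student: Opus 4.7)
The idea is to couple the two solutions via the same Brownian motion $B_t$: given an initial coupling $\pi_0\in\mathcal{C}(\mu_{1,0},\mu_{2,0})$, the noise cancels in the difference SDE for $\Delta\tilde X_t$, and the task reduces to controlling the three scalar quantities $u_t:=\bE[(\Delta\tilde X_t)^2]^{1/2}$, $v_t:=\bE[(\Delta\tilde Y_t)^2]^{1/2}$ and $|\Delta m_t|$, which together determine $W(\mu_{1,t},\mu_{2,t})$. These will be estimated through a coupled system of differential inequalities, and the final bound extracted by a bootstrap argument on the slow-time interval $[0,2T/\gd]$, followed by taking the infimum over $\pi_0$ to obtain \eqref{eq:bound W2b delta mu}.

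\noindent\textbf{Key differential inequalities.} From \eqref{eq:evol_mt}, $\Delta m_t$ solves a slow ODE driven by $\gd$ and by $\bE[X_{1,t}^3]-\bE[X_{2,t}^3]$. Decomposing $X_i=\tilde X_i+m_i^x$ and using Cauchy-Schwarz together with the $L^6$-moment bounds of Lemma~\ref{lem:moments X Y} and the closeness of $\tilde\mu_{i,t}$ to $q_0$ provided by Lemma~\ref{lem:close to q0}, the nonlinear drift is controlled by
\[
\big|\bE[X_{1,t}^3]-\bE[X_{2,t}^3]\big|\leq C(u_t+|\Delta m_t|),\qquad\text{hence}\qquad \Big|\tfrac{d}{dt}\Delta m_t\Big|\leq C\gd(u_t+|\Delta m_t|).
\]
Next, applying It\^o to $(\Delta\tilde X_t)^2$ yields a noise-free equation (the Brownian terms cancel, and the deterministic $(\dot x_{1,t}-\dot x_{2,t})$ drift vanishes in expectation since $\bE[\Delta\tilde X_t]=0$):
\[
\tfrac12\tfrac{d}{dt}u_t^2=-Ku_t^2+\gd\,\bE\big[\Delta\tilde X_t\big((X_1-X_2)-\tfrac13(X_1^3-X_2^3)-(Y_1-Y_2)\big)\big].
\]
A H\"older interpolation of the type $\bE[|\Delta\tilde X|^3]\leq u_t^{3/2}\bE[|\Delta\tilde X|^6]^{1/4}$, whose second factor is uniformly bounded through Lemma~\ref{lem:moments X Y}, dominates the cubic contribution by $Cu_t$ and yields
\[
\dot u_t\leq-(K-C\gd)u_t+C\gd(|\Delta m_t|+v_t).
\]
For $v_t$, the linearity of the $\Delta\tilde Y$ equation together with Cauchy-Schwarz and Duhamel leads to $v_t\leq e^{-b\gd t/c}v_0+(\gd/c)\int_0^t e^{-b\gd(t-s)/c}u_s\,ds$.

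\noindent\textbf{Coupled Gr\"onwall and conclusion.} On $[0,2T/\gd]$, the growth factor $e^{C\gd\cdot 2T/\gd}=e^{2CT}$ remains bounded independently of $\gd$. Starting from $|\Delta m_0|\leq\gd^{-\beta}\Lambda(\pi_0)$ (by definition of $\Lambda$, cf.~\eqref{eq:def_d}) and from a preliminary rough bound $\sup_s u_s\leq C(u_0+\gd^{1-\beta}\Lambda(\pi_0))$ obtained by Gr\"onwall on the $u$-inequality, one upgrades the $\Delta m$-equation into $|\Delta m_t|\leq C\gd^{-\beta}\Lambda(\pi_0)$ uniformly on $[0,2T/\gd]$. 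Plugged back into the $u_t$-inequality, the term $\gd|\Delta m_t|\leq C\gd^{1-\beta}\Lambda(\pi_0)$ produces exactly the asymptotic level in \eqref{eq:contract bar X}. Substituting the resulting $u_s$-bound into the Duhamel formula for $v_t$, the transient $e^{-b\gd t/c}\kappa_6\gd u_0$ (coming from the $e^{-(K-C\gd)s}u_0$ component) and the asymptotic level $(\kappa_5/b)\gd^{1-\beta}\Lambda(\pi_0)$ of \eqref{eq:contract bar Y} both emerge naturally. Combining the three estimates with $\gd^\beta|\Delta m_t|\leq\kappa_3\Lambda(\pi_0)$ and taking the infimum over $\pi_0$ yields \eqref{eq:bound W2b delta mu}.

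\noindent\textbf{Main obstacle.} The delicate step will be the cubic nonlinearity $X^3$: the quantity $\bE[(\Delta\tilde X)^2(X_1^2+X_1X_2+X_2^2)]$ cannot be controlled by $u_t^2$ alone since $X_1^2+X_1X_2+X_2^2$ is unbounded; only $u_t$-type control is accessible via H\"older interpolation, for which the $L^6$-bounds of Lemma~\ref{lem:moments X Y} are essential (this is precisely why moment order $6$ was propagated in Section~\ref{sec:closeness}, not $4$). This forces a differential inequality on $u_t$ rather than on $u_t^2$, degrades the contraction rate by $O(\gd)$, and is what produces the $\gd^{1-\beta}$ scaling of the additive error (rather than a more natural $\gd$). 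Orchestrating the bootstrap between $u$, $v$ and $\Delta m$ so that no constant blows up as $\gd\to 0$ on the long slow-time interval $[0,2T/\gd]$ is the main technical task.
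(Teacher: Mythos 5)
Your overall plan is sound and closely parallels the paper's strategy: synchronous coupling through the common Brownian motion, three scalar observables $u_t$, $v_t$, $|\Delta m_t|$, a coupled system of differential inequalities, and a bootstrap on $[0,2T/\gd]$. For $\Delta m_t$ you use a crude Gr\"onwall bound instead of the paper's variation-of-constants with the Floquet principal matrix $\Pi^\gd$; since the coefficient in front of $|\Delta m_s|$ is $O(\gd)$, the growth factor over $[0,2T/\gd]$ is $e^{O(T)}$ in both cases, so this is a legitimate simplification that merely produces a larger (but still $\gd$-uniform) constant $\kappa_3$.

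However, there is a genuine gap in your treatment of the cubic drift in Step~2, and your ``main obstacle'' paragraph reveals that you have misidentified the difficulty. You write that $\bE[(\Delta\tilde X)^2(X_1^2+X_1X_2+X_2^2)]$ ``can only be controlled by $u_t$'' via the H\"older interpolation $\bE[|\Delta\tilde X|^3]\leq u_t^{3/2}\bE[|\Delta\tilde X|^6]^{1/4}$. Indeed that interpolation only yields a bound of order $u_t$ (not $u_t^2$), and if you insert it into $\tfrac12\tfrac{d}{dt}u_t^2\leq -Ku_t^2-\tfrac{\gd}{3}\bE[\Delta\tilde X_t(X_1^3-X_2^3)]+\cdots$ you would obtain a forcing term of size $C\gd u_t$, i.e.\ after dividing by $u_t$ a constant level $+C\gd$ in $\dot u_t$. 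That extra $C\gd$ is independent of $\Lambda(\pi_0)$ and would destroy the contraction claim \eqref{eq:contract bar X}: for initial data with $\Lambda(\pi_0)\ll\gd^\beta$, the asymptotic level $C\gd/K$ would dominate $\kappa_5\gd^{1-\beta}\Lambda(\pi_0)$. The inequality you assert, $\dot u_t\leq -(K-C\gd)u_t+C\gd(|\Delta m_t|+v_t)$ with no additive constant, simply does not follow from your interpolation.

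The point you are missing is that the apparently problematic term has a favorable sign and does not need any moment interpolation at all. Writing $X_{i,t}=\tilde X_{i,t}+x_{i,t}$, the paper observes that
\begin{equation}
-\Delta\tilde X\,(X_1^3-X_2^3)=-(\Delta\tilde X)^2 A-\Delta\tilde X\,\Delta x_t\,B,
\end{equation}
where $A=\tilde X_1^2+\tilde X_1\tilde X_2+\tilde X_2^2+3x_{1,t}(\tilde X_1+\tilde X_2)+3x_{1,t}^2$. One checks that
\begin{equation}
A=\left(\tilde X_1+\tfrac{\tilde X_2+3x_{1,t}}{2}\right)^2+\tfrac34\left(\tilde X_2+x_{1,t}\right)^2\geq 0,
\end{equation}
so $-\bE[(\Delta\tilde X)^2A]\leq 0$ and the term can simply be dropped (or, as the paper does, crudely bounded by $12x_{1,t}^2\bE[(\Delta\tilde X)^2]\leq Cu_t^2$, which is absorbed into the $-(K-\kappa_4\gd)u_t^2$ rate). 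The cross term $-\Delta\tilde X\,\Delta x_t\,B$ is handled by a single Cauchy--Schwarz and the $L^6$ moment bounds, giving $C|\Delta m_t|\,u_t$, not $Cu_t$. Thus the differential inequality on $u_t^2$ carries no $\gd$-level constant, and the $L^6$ moments of Lemma~\ref{lem:moments X Y} are used to control coefficients like $\bE[(\tilde X_1^2+\tilde X_1\tilde X_2+\tilde X_2^2)^2]^{1/2}$, not for the interpolation you propose. Without this sign observation, your proof of \eqref{eq:contract bar X} is incomplete.
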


\begin{proof}
Fix an arbitrary coupling $ \left\lbrace (X_{1,0},Y_{1,0}), (X_{2,0},Y_{2,0})\right\rbrace$ with law $ \pi_{ 0}\in \mathcal{ C}( \mu_{ 1,0}, \mu_{ 2, 0})$ and consider the solutions $(X_{i,t},Y_{i,t})$, $i=1, 2$ to \eqref{eq:kinetic FhN} with initial condition $(X_{i,0},Y_{i,0})$, $i=1, 2$ driven by the same Brownian motion. The law $ \pi_{ t}$ of $\left\lbrace (X_{1,t},Y_{1,t}), (X_{2,t},Y_{2,t})\right\rbrace$ belongs to $ \mathcal{ C}( \mu_{ 1, t}, \mu_{ 2, t})$.

\begin{description}
\item[Step 1]control on $ \Delta m_{ t}$. This part is based on the a priori control obtained in Lemmas~\ref{lem:moments X Y}, \ref{lem:close to q0} and~\ref{lem:proj on gamma}. We have
\begin{multline}
\Delta \dot m_{t}=  \delta\int_{\bbR^2}F(z+m_{1,t})\dd \tilde \mu_{1,t}(z)- \delta \int_{\bbR^2}F(z+m_{2,t})\dd \tilde \mu_{2,t}(z) \\
=  \delta DF_{\frac{\gs^2}{K}}(\gamma_{\theta_{2,0}+t})\Delta m_t + \delta h(t),
\end{multline}
where
\begin{align}
h(t):=& \left(DF_{\frac{\gs^2}{K}}(m_{2,t})- DF_{\frac{\gs^2}{K}}(\gamma_{\theta_{2,0}+t})\right) \Delta m_t\nonumber\\
&+ \int_{\bbR^2}DF(z+m_{2,t})\Delta m_t\dd \tilde \mu_{1,t}(z) - \int_{\bbR^2} DF(z+m_{2,t})\Delta m_tq_0(z)\dd z\nonumber\\
& + \int_{\bbR^2}\Big(F(z+m_{1,t})-F(z+m_{2,t})-DF(z+m_{2,t})\Delta m_t\Big)\dd \tilde \mu_{1,t}(z)\nonumber\\
& + \int_{\bbR^2} F(z+m_{2,t})\dd \tilde \mu_{1,t}(z)-\int_{\bbR^2} F(z+m_{2,t})\dd \tilde \mu_{2,t}(z), \nonumber\\
&=: h_{ 1}(t)+ \ldots + h_{ 4}(t).
\end{align}
One obtains from \eqref{eq:var_const_z} that
\begin{align}\label{eq:decomp Delta m}
\Delta m_{ t} = \Pi^{ \delta}(\theta_{2,0}+t,\theta_{2,0}) \Delta m_{ 0}+  \delta\int_{0}^t\Pi^{ \delta}(\theta_{2,0}+t,\theta_{2,0}+s)h(s)\dd s.
\end{align}
Since $m_{ 2, 0}\in E_{ 0}^{ \delta}(\kappa_2\gd)$ by assumption, adapting the proof of Lemma~\ref{lem:proj on gamma}, we can find a constant $C(T)$ such that $\left\vert m_{ 2, t}-\gamma_{\theta_{2,0}+ t} \right\vert\leq C(T) \gd$, and $ \left\vert m_{ 2, t}+\gamma_{\theta_{2,0}+t} \right\vert \leq 2R_0$ for all $t\leq \frac{2T}{\gd}$. Hence, we deduce that 
\begin{align}
\left\vert h_{ 1}(t) \right\vert &\leq \left\vert DF_{ \frac{ \sigma^{ 2}}{ K}}(m_{ 2, t})-DF_{ \frac{ \sigma^{ 2}}{ K}}(\gamma_{ \theta_{2,0}+ t}) \right\vert \left\vert \Delta m_t \right\vert= \left\vert  \left(x_{ 2, t}\right)^{ 2} - \left(\gamma_{\theta_{2,0}+ t}^{ x}\right)^{ 2} \right\vert \left\vert \Delta m_{t} \right\vert \nonumber\\
&\leq 4C(T) R_0 \gd \left\vert \Delta m_{t}\right\vert =:c_{ 1}\gd \left\vert \Delta m_{t} \right\vert.\nonumber
\end{align}
Moreover, using the notations of Lemma~\ref{lem:close to q0},
\begin{align*}
\left\vert h_{ 2}(t) \right\vert &\leq  \left\vert \int_{\bbR^2}DF(z+m_{2,t})\dd \tilde \mu_{1,t}(z) - \int_{\bbR^2} DF(z+m_{2,t}) q_0(z)\dd z \right\vert \left\vert \Delta m_{t} \right\vert\\
&=  \left\vert \int_{\bbR^2}(x+x_{2,t})^{ 2}\dd \tilde \mu_{1,t}(x, y) - \int_{\bbR^2} (x+x_{2,t})^{ 2} q_0(x, y)\dd x {\rm d}y \right\vert \left\vert \Delta  m_{t}\right\vert\\
&=\left\vert \mathbf{ E}\left[\left(\tilde X_{ 1, t}\right)^{ 2} -\left(Z_{ t}^{ x}\right)^{ 2}\right]\right\vert \left\vert \Delta m_{t}\right\vert \\
&\leq  \left( \mathbf{ E} \left[ \left(X_{ 1, t}\right)^{ 2}\right]^{ \frac{ 1}{ 2}}+ \left\vert x_{ 1, t} \right\vert + \mathbf{ E} \left[ \left(Z_{ t}^{ x}\right)^{ 2}\right]^{ \frac{ 1}{ 2}}\right)\mathbf{ E}\left[ \left\vert\tilde X_{ 1, t} -Z_{ t}^{ x}\right\vert^{ 2}\right]^{ \frac{ 1}{ 2}} \left\vert \Delta m_{ t} \right\vert\\
&\leq\left( \left( \kappa_{ 0}^{ x}\right)^{ \frac{ 1}{ 6}}+ R_0+ \frac{ \sigma}{ K^{ \frac{ 1}{ 2}}}\right) \kappa_1\gd \left\vert \Delta m_{ t} \right\vert\\
&=: c_{ 2} \delta\left\vert \Delta m_{t}\right\vert.
\end{align*}
Concerning $h_{ 3}$, a Taylor expansion shows that there a numerical constant $C$ such that $\left\vert h_{ 3}(t) \right\vert\leq C \left(\left\vert x_{ 1,t} \right\vert + \left\vert x_{ 2, t} \right\vert\right)\left\vert \Delta m_{t} \right\vert^{ 2}\leq c_{ 3} \gd \left\vert \Delta m_{t} \right\vert$, for some constant $c_{ 3}$. It remains to treat the last term $h_{ 4}$:
\begin{multline}
\vert h_4(t)\vert^2 =\left\vert \mathbf{ E} \left[\Delta \tilde X_{ t} -\left(\frac{ \left( \tilde X_{ 1, t} + x_{2, t}\right)^{ 3}}{3} -\frac{ \left(\tilde X_{ 2, t}+x_{2,t}\right)^{ 3}}{3} \right) -\Delta \tilde Y_{t}\right]\right\vert^{ 2}\\
+ \left\vert \frac{1}{c} \mathbf{ E} \left[\Delta \tilde X_{t} -b \Delta \tilde Y_{t}\right] \right\vert^{ 2},
\end{multline}
and relying on Lemma~\ref{lem:moments X Y}, Lemma~\ref{lem:close to q0} and the identity
\begin{equation}
(x_1+n)^3-(x_2+n)^3=(x_1-x_2)\big((x_1+n)^2 +(x_1+n)(x_2+n)  +(x_2+n)^2 \big),
\end{equation}
we obtain that
\begin{equation}\label{eq: bound h4}
\vert h_4(t)\vert\leq c_4\left( \left(\bE\left[\left(\Delta \tilde X_{t}\right)^2\right]\right)^\frac12+b\left(\bE\left[\left(\Delta \tilde Y_{t}\right)^2\right]\right)^{\frac12}\right),
\end{equation}
for some constant $c_{ 4}>0$. Putting everything together, we obtain, for $t\leq \frac{ 2 T}{ \delta}$,
\begin{multline}
\label{eq:Dmt_XY}
\left\vert \Delta m_t\right\vert\leq C_\Pi \left( \left \vert \Delta m_0\right\vert+c_4 \gd \int_{ 0}^{t} \left( \left(\bE\left[\left(\Delta \tilde X_{s}\right)^2\right]\right)^\frac12+b\left(\bE\left[\left(\Delta \tilde Y_{s}\right)^2\right]\right)^{\frac12}\right) {\rm d}s \right)\\
+C_\Pi (c_1+c_2+c_3)\gd^2\int_0^t \left \vert \Delta m_s\right\vert \dd s\, .
\end{multline}
With these notations at hand, for the rest of the proof, we define
\begin{equation}
\label{eq:kappa3}
\kappa_{ 3}:=C_\Pi(1+4c_4T)e^{2C_\Pi(c_1+c_2+c_3)T}+1
\end{equation}
The reason for this particular choice of $ \kappa_{ 3}$ will become clear at Step~4 below. For this choice of $ \kappa_{ 3}$, define (recall the definition of $d \left(\cdot, \cdot\right)$ in \eqref{eq:def_d})
\begin{equation}
\label{eq:t3}
t_{ 2}:=t_2(\kappa_3, \delta):=\inf\left\{t\geq 0:\, \Lambda(\pi_{ t})> \kappa_3\,  \Lambda(\pi_{ 0})\right\}.
\end{equation}
Note that $t_{ 2}$ depends on $ \kappa_{ 3}$ and $ \delta$ but also on the choice of the particular coupling $ \pi_{ 0}$ of the initial condition. For the rest of the proof, we use the shortcut :
\begin{equation}
\label{eq:dt}
\Lambda_{ t}:= \Lambda(\pi_{ t})\, .
\end{equation}

\item[Step 2] control on $ \Delta \tilde{ X}_{ t}$. We have
\begin{multline}
\frac12 \dd \left( \Delta \tilde X_{t}\right)^2=-K\left(\Delta \tilde X_{t}\right)^2\dd t\\
+\left(\Delta \tilde X_{t}\right)\Bigg(- \Delta \dot x_t
+\gd \Bigg( \Delta X_{t}-\frac{(X_{1,t})^3-(X_{2,t})^3}{3}-\Delta Y_t\Bigg)\Bigg)\dd t,
\end{multline}
so that
\begin{multline}
\frac12 \frac{\dd}{\dd t} \bE\left[\left(\Delta \tilde X_{t}\right)^2\right]\leq -(K-\gd)\bE\left[\left(\Delta \tilde X_{t}\right)^2\right]\\
+\left(\bE\left[\left(\Delta \tilde X_{t}\right)^2\right]\right)^\frac12\bigg( \left|\Delta \dot x_{t}\right|
+\gd \left|\Delta x_{t}\right| + \delta\left|\Delta y_{t} \right|+\gd \left(\bE\left[\left(\Delta \tilde Y_{t}\right)^2\right]\right)^\frac12\bigg)\\
-\frac{\gd}{3}\bE\left[\Delta \tilde X_{t}\left(\left( \tilde X_{1,t}+x_{1,t}\right)^3 -\left( \tilde X_{2,t}+x_{2,t}\right)^3\right)\right].
\end{multline}
Now, remarking that, for $x_1,x_2,n_1,n_2\in \bbR$ and $\Delta x:=x_1-x_2$, $\Delta n:=n_1-n_2$,
\begin{align}
-\Delta x &\, ((x_1+n_1)^3-(x_2+n_2)^3) \nonumber\\
&=-\Delta x\, (x_1^3-x_2^3+3(x_1^2n_1-x_2^2n_2)+3(x_1n_1^2-x_2n_2^2)+n_1^3-n_2^3)\nonumber\\
&=-\left(\Delta x\right)^2(x_1^2+x_1x_2+x_2^2+3n_1(x_1+x_2)+3n_1^2)\nonumber\\
&\qquad \qquad -\Delta x \, \Delta n\, (3x_2^2+3x_2+n_1^2+n_1n_2+n_2^2) \nonumber\\
&\leq 12 n_1^2 \left(\Delta x\right)^2-\Delta x\, \Delta n\, (3x_2^2+3x_2+n_1^2+n_1n_2+n_2^2) ,
\end{align}
we get, recalling Lemma~\ref{lem:moments X Y}, for a positive constant $C_1$,
\begin{multline}
\bE\left[\Delta \tilde X_{t}\left(\left( \tilde X_{1,t}+x_{1,t}\right)^3 -\left( \tilde X_{2,t}+x_{2,t}\right)^3\right)\right]\\ \leq C_1\left(\bE\left[\left(\Delta \tilde X_{t}\right)^2 \right]
+\left|\Delta x_{t}\right|\left( \bE\left[\left(\Delta \tilde X_{t}\right)^2 \right] \right)^\frac12\right).
\end{multline}
Proceeding similarly, we obtain, for some constant $C_2$,
\begin{multline}
\frac{1}{\gd}\left| \Delta \dot x_{t}\right|=\left|\bE\left[\Delta X_{t}-\frac{\left(X_{1,t}\right)^3-\left(X_{2,t}\right)^3}{3}-\Delta Y_{t} \right]\right|\\
\leq C_2\left( \left( \bE\left[\left(\Delta \tilde X_{t}\right)^2 \right] \right)^\frac12+\left( \bE\left[\left(\Delta \tilde Y_{t}\right)^2 \right] \right)^\frac12+\left|\Delta m_t\right|\right).
\end{multline}
Gathering all these estimates, we get, for positive constants $\kappa_4$ and $C_3$, for $t\leq t_2$,
\begin{align}
\frac12 \frac{\dd}{\dd t} \bE\left[\left(\Delta \tilde X_{t}\right)^2\right]&\leq -(K-\kappa_4\gd)\bE\left[\left(\Delta \tilde X_{t}\right)^2\right] \nonumber\\
&+C_3\gd \left(\bE\left[\left(\Delta \tilde X_{t}\right)^2\right]\right)^\frac12\left(\left( \bE\left[\left(\Delta \tilde Y_{t}\right)^2 \right] \right)^\frac12+\left|\Delta m_{t}\right| \right) \nonumber\\
&\leq -(K-\kappa_4\gd)\bE\left[\left(\Delta \tilde X_{t}\right)^2\right]+C_3 \kappa_{ 3}\gd \left(b^{ -1} + \delta^{ - \beta}\right) \Lambda_{ 0}\left(\bE\left[\left(\Delta \tilde X_{t}\right)^2\right]\right)^\frac12\, , \label{eq:bound EDX}
\end{align}
where we recall the definition of $ \Lambda_{ t}$ in \eqref{eq:dt} and the definition of $t_{ 2}$ in \eqref{eq:t3}. By Lemma ~\ref{lem:gronwall_sqrt}, this implies that for all $t\leq t_2$,
\begin{equation}
 \left(\bE\left[\left(\Delta \tilde X_{t}\right)^2\right]\right)^{\frac12}\leq \max\left\{ \left(\bE\left[\left(\Delta \tilde X_{0}\right)^2\right]\right)^{\frac12}, \frac{ C_3 \kappa_3 (b^{ -1} + \delta^{ - \beta}) }{K-\kappa_4 \delta}\gd \Lambda_{ 0}\right\}.
\end{equation}
So taking $\gd$ small enough (depending on $\kappa_3$), we deduce in particular that, since $ \beta\in (0, 1)$ by hypothesis, for $t\leq t_2$
\begin{equation}
\label{eq:EDX_d0}
\bE\left[\left(\Delta \tilde X_{t}\right)^2\right]^{ \frac{ 1}{ 2}}\leq \Lambda_{ 0}.
\end{equation}
\item[Step 3] control on $ \Delta \tilde{ Y}_{ t}$. A simple calculation leads to
\begin{equation}
\frac{c}{2\gd}\dd \left(\Delta \tilde Y_{t} \right)^2= \left(\Delta \tilde X_{t} \, \Delta \tilde Y_{t} -b \left(\Delta \tilde Y_{t} \right)^2\right)\dd t,
\end{equation}
so that, for $t\leq t_2$, using \eqref{eq:EDX_d0},
\begin{equation}\label{eq:bound EDY}
\frac{c}{2\gd}\frac{\dd}{\dd t} \bE\left[\left(\Delta \tilde Y_{t}\right)^2\right]\leq -b \bE\left[\left(\Delta \tilde Y_{t}\right)^2\right]+ \Lambda_{ 0} \left(\bE\left[\left(\Delta \tilde Y_{t}\right)^2\right]\right)^\frac12,
\end{equation}
which implies in particular, by Lemma~\ref{lem:gronwall_sqrt}, that for all $t\leq t_2$,
\begin{equation}
\label{eq:EDY_d0}
\left( \bE\left[(\Delta \tilde Y_{t})^2\right]\right)^{\frac12}\leq \max\left\{\left(\bbE\left[\left(\Delta \tilde Y_0\right)^2\right]\right)^{\frac12},\frac{ \Lambda_{ 0}}{b}\right\}\leq \frac{ \Lambda_{ 0}}{b}.
\end{equation}
\item[Step 4] proof of \eqref{eq:bound W2b delta mu}. Let us first prove that, for the choice of $ \kappa_{ 3}$ in \eqref{eq:kappa3}, we have, for $ \delta\leq \delta_{ 3}$ for some $ \delta_{ 3}$ sufficiently small
\begin{equation}
\label{eq:t2_VS_Tgam}
t_{ 2}(\kappa_{ 3}, \delta) \geq \frac{ 2T}{ \delta}.
\end{equation}
Suppose that $ \delta_{ 3}$ is chosen sufficiently small so that \eqref{eq:EDX_d0} is true and such that $ \delta_{ 3}\leq 1$. Suppose that \eqref{eq:t2_VS_Tgam} does not hold for some $ \delta\leq \delta_{ 3}$: $t_{ 2} < \frac{ 2T}{ \delta}$. For this $ \delta$, for $t\leq t_{ 2}< \frac{ 2T}{ \delta}$, incorporating \eqref{eq:EDX_d0} and \eqref{eq:EDY_d0} into \eqref{eq:Dmt_XY} and applying Gr\"onwall's Lemma gives, for $t\leq t_{ 2}< \frac{ 2 T_{ \gamma}}{ \delta}$,
\begin{align}
  \delta^{ \beta}\left\vert \Delta m_{ t} \right\vert &\leq  C_\Pi \left( \delta^{ \beta}\left \vert \Delta m_0\right\vert+2c_4 \gd^{ 1+ \beta} t\, \Lambda_{ 0}\right)  e^{C_\Pi(c_1+c_2+c_3) \delta^2  t}\, ,\nonumber\\
&\leq \Lambda_{ 0} C_\Pi \left( 1+4c_4T\right)  e^{2C_\Pi(c_1+c_2+c_3)T}= \left( \kappa_{ 3}-1\right) \Lambda_{ 0},\label{eq:bound delta mt}
\end{align}
where we have used $\gd_{ 3}\leq 1$. Collecting \eqref{eq:EDX_d0}, \eqref{eq:EDY_d0} and \eqref{eq:bound delta mt}, we obtain that for all $t\leq t_{ 2}$, $ \Lambda_{ t}< \kappa_{ 3} \Lambda_{ 0}$. By continuity, there exists some $ \epsilon>0$ such that $ \Lambda_{ t}< \kappa_{ 3} \Lambda_{ 0}$ for $t\in [t_{ 2}, t_{ 2}+ \epsilon)$, which contradicts the definition of $t_{ 2}$. Hence, \eqref{eq:t2_VS_Tgam} follows. In particular, since $W$ is an infimum over all possible coupling, for $t\leq \frac{ 2T}{ \delta}$, $W( \mu_{ 1, t}, \mu_{ 2, t}) \leq \kappa_{ 3} \Lambda_{ 0}= \kappa_{ 3} \Lambda(\pi_{ 0})$. Since this is true for all possible coupling of the initial condition, \eqref{eq:bound W2b delta mu} follows.
\item[Step 5]
Let us now prove \eqref{eq:contract bar X}: from \eqref{eq:bound EDX} and applying Lemma~\ref{lem:gronwall_sqrt}, we deduce that, for all $t\leq\frac{2T}{\gd}$,
\begin{equation}\label{eq: bound EDX2}
\left(\bE\left[\left(\Delta \tilde X_{t}\right)^2\right]\right)^\frac12\leq e^{-(K-\kappa_4\gd)t}\left(\bE\left[\left(\Delta \tilde X_{0}\right)^2\right]\right)^\frac12+\frac{ C_3\kappa_3\left(b^{ -1}+ \delta^{ - \beta}\right)}{K-\kappa_4\gd} \gd\, \Lambda_{ 0},
\end{equation}
which implies \eqref{eq:contract bar X} with $\kappa_5=\frac{2C_3\kappa_3b\left(1+b\right)}{K}$ as soon as $\gd\leq \frac{K}{2\kappa_4}$. Let us now turn to the proof of \eqref{eq:contract bar Y}: from \eqref{eq:bound EDY}, we have, for all $t\leq\frac{2T}{\gd}$,
\begin{multline}
\frac{c}{2\gd}\frac{\dd}{\dd t} \bE\left[\left(\Delta \tilde Y_{t}\right)^2\right]\leq -b \bE\left[\left(\Delta \tilde Y_{t}\right)^2\right]\\+\left(e^{-(K-\kappa_4\gd)t}\left(\bE\left[\left(\Delta \tilde X_{0}\right)^2\right]\right)^\frac12+\kappa_5\gd^{1-\beta} \, \Lambda_{ 0}\right) \left(\bE\left[\left(\Delta \tilde Y_{t}\right)^2\right]\right)^\frac12,
\end{multline}
so that, by Lemma~\ref{lem:gronwall_sqrt} again implies
\begin{multline}
\left( \bE\left[(\Delta \tilde Y_{t})^2\right]\right)^{\frac12}\leq e^{-\frac{b\gd}{c}t}\left(\left( \bE\left[(\Delta \tilde Y_{0})^2\right]\right)^{\frac12}+\frac{\gd}{cK-(c\kappa_4+b)\gd}\left( \bE\left[(\Delta \tilde X_{0})^2\right]\right)^{\frac12} \right)\\
+\kappa_5 \gd^{1-\beta} \, \frac{ \Lambda_{ 0}}{b},
\end{multline}
which leads to \eqref{eq:contract bar Y} with $\kappa_6=\frac{2}{cK}$, as soon as $\gd\leq \frac{cK}{2(c\kappa_4+b)}$.
\end{description}
This concludes the proof of Lemma~\ref{lem:bound delta bar X delta bar Y}.
\end{proof}

\begin{lemma}\label{lem:encadre}
There exists a $\gd_4>0$ and positive constants $\kappa_7,\kappa_8,\kappa_9$ such that if $\gd\leq \gd_4$, if the hypotheses of Lemma~\ref{lem:bound delta bar X delta bar Y} are satisfied, and if moreover we have the initial bounds $W(\tilde \mu_{1,0},\tilde \mu_{2,0})\leq \kappa_7 \gd^{ 2}\vert \Delta \theta_0\vert$ and $\left\vert p^\perp_{\theta_{1,0}}(m_{1,0}-\gamma_{\theta_{1,0}})-p^\perp_{\theta_{2,0}}(m_{2,0}-\gamma_{\theta_{2,0}})\right\vert\leq \kappa_8\gd^\frac32 \vert \Delta \theta_0\vert$, then for all $t\in \left[0,\frac{2T}{\gd}\right]$ we have
\begin{equation}\label{eq:encadre delta m}
\left\vert \vert \Delta \theta_t\vert-\vert \Delta \theta_0\vert\right\vert\leq  \kappa_9 \gd^\frac{1}{2}\vert \Delta \theta_0\vert,
\end{equation}
and for all $t\in \left[\frac{T}{\gd},\frac{2T}{\gd}\right]$ we have
\begin{equation}\label{eq:contract W2b Delta tilde mu}
W(\tilde \mu_{1,t},\tilde \mu_{2,t})\leq \kappa_7 \gd^{ 2} \vert \Delta \theta_t\vert,
\end{equation}
and
\begin{equation}\label{eq:contract Delta p perp}
\left\vert p^\perp_{\theta_{1,t}}(m_{1,t}-\gamma_{\theta_{1,t}})-p^\perp_{\theta_{2,t}}(m_{2,t}-\gamma_{\theta_{2,t}})\right\vert\leq \kappa_8\gd^\frac32 \vert \Delta \theta_t\vert.
\end{equation}
\end{lemma}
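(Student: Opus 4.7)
The proof combines three ingredients over the time interval $[0, 2T/\gd]$: the rapid rotation around the cycle (which nearly preserves $|\Delta \theta_t|$), the Wasserstein contractions \eqref{eq:contract bar X}--\eqref{eq:contract bar Y} for the centered distributions, and the Floquet contraction \eqref{eq:contract p perp_delta} for the normal direction. The choice \eqref{hyp T} of $T$ ensures that the two slow contraction factors are strictly smaller than $1$. As a preliminary step I would bound $|\Delta m_0|$ by $C\gd|\Delta\theta_0|$: since $m_{i,0}\in E^\gd_{\theta_{i,0}}(1)$ one can write $m_{i,0}=\gamma_{\theta_{i,0}}+\alpha_{i,0}e^\gd_{\theta_{i,0}}$ with $\alpha_{i,0}:=p^{\gd,\perp}_{\theta_{i,0}}(m_{i,0}-\gamma_{\theta_{i,0}})$, and the scalings $\gamma^\gd_\theta=\gamma^1_{\gd\theta}$, $e^\gd_\theta=e^1_{\gd\theta}$ give $|\gamma_{\theta_{1,0}}-\gamma_{\theta_{2,0}}|+|\alpha_{2,0}||e^\gd_{\theta_{1,0}}-e^\gd_{\theta_{2,0}}|\leq C\gd|\Delta\theta_0|$, while $|\alpha_{1,0}-\alpha_{2,0}|\leq\kappa_8\gd^{3/2}|\Delta\theta_0|$ by hypothesis. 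Picking a coupling $\pi_0\in\mathcal{C}(\mu_{1,0},\mu_{2,0})$ nearly attaining $W(\tilde\mu_{1,0},\tilde\mu_{2,0})$, and using $\beta\in(0,1)$, the dominant term in $\Lambda(\pi_0)$ is $\gd^\beta|\Delta m_0|$, so $\Lambda(\pi_0)\leq C\gd^{1+\beta}|\Delta\theta_0|$.

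For \eqref{eq:encadre delta m}, I would integrate $|\Delta\dot\theta_t|$ over $[0,2T/\gd]$. From the chain rule $\dot\theta_{i,t}=D\proj^\gd(m_{i,t})\cdot\dot m_{i,t}$ and the expansion \eqref{eq:expansion proj}, each $\dot\theta_{i,t}$ equals $1$ plus a correction of order $\gd$ (as in the proof of Lemma~\ref{lem:proj on gamma}). Subtracting the two identities and using the Lipschitz dependence of the coefficients in \eqref{eq:expansion proj}, together with the a priori bounds $|\Delta m_t|\leq C\gd|\Delta\theta_0|$ (inherited from \eqref{eq:bound delta mt}) and $W(\tilde\mu_{1,t},\tilde\mu_{2,t})\leq\kappa_3\Lambda(\pi_0)$ from Lemma~\ref{lem:bound delta bar X delta bar Y}, one obtains $|\Delta\dot\theta_t|=O(\gd^2|\Delta\theta_0|)$. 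Integrating over a length-$2T/\gd$ interval yields $\big||\Delta\theta_t|-|\Delta\theta_0|\big|=O(\gd|\Delta\theta_0|)$, which is stronger than required and fixes $\kappa_9$.

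For \eqref{eq:contract W2b Delta tilde mu} I would apply \eqref{eq:contract bar X}--\eqref{eq:contract bar Y} to the coupling $\pi_0$ above. For $t\geq T/\gd$, the initial term in \eqref{eq:contract bar X} contributes $e^{-(K-\kappa_4\gd)T/\gd}\sqrt{\bE[(\Delta\tilde X_0)^2]}$, exponentially small in $1/\gd$, while the source $\kappa_5\gd^{1-\beta}\Lambda(\pi_0)\leq C\gd^2|\Delta\theta_0|$. Similarly \eqref{eq:contract bar Y} yields $b\sqrt{\bE[(\Delta\tilde Y_t)^2]}\leq e^{-bT/c}\kappa_7\gd^2|\Delta\theta_0|+C\gd^2|\Delta\theta_0|\leq(\kappa_7/16+C)\gd^2|\Delta\theta_0|$ by \eqref{hyp T}. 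Using \eqref{eq:encadre delta m} to replace $|\Delta\theta_0|$ by $|\Delta\theta_t|$ up to a factor close to $1$, and choosing $\kappa_7$ large enough so that $\kappa_7/16+C<\kappa_7$, gives \eqref{eq:contract W2b Delta tilde mu}.

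The main obstacle is \eqref{eq:contract Delta p perp}: tracking the Floquet contraction of the normal direction while the projections $\theta_{i,t}$ themselves move. My approach is to derive from $m_{i,t}=\gamma_{\theta_{i,t}}+\alpha_{i,t}e^\gd_{\theta_{i,t}}$ and the dynamics of $m_{i,t}$ a scalar ODE of the form $\dot\alpha_{i,t}=-\gd\gl\,\alpha_{i,t}+r_i(t)$, where $r_i(t)$ collects the quadratic residual in the Taylor expansion of $F$ around $\gamma_{\theta_{i,t}}$ together with the non-Gaussian correction from $\tilde\mu_{i,t}\neq q_0$. Taking the difference and using the preliminary bound on $|\Delta m_t|$, Lemma~\ref{lem:close to q0}, Lemma~\ref{lem:bound delta bar X delta bar Y}, and \eqref{eq:encadre delta m}, one shows $|r_1(s)-r_2(s)|\leq C\gd^{5/2}|\Delta\theta_0|$. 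Integrating with the Floquet factor $e^{-\gl\gd(t-s)}$ and using $e^{-\gl T}\leq 1/4$ from \eqref{hyp T} yields $|\Delta\alpha_t|\leq(\kappa_8/4)\gd^{3/2}|\Delta\theta_0|+C\gd^{3/2}|\Delta\theta_0|$ for $t\in[T/\gd,2T/\gd]$; choosing $\kappa_8$ large enough so that $\kappa_8/4+C<\kappa_8$ and invoking \eqref{eq:encadre delta m} concludes. The delicate technical point is the alignment between the time-dependent linear forms $p^{\gd,\perp}_{\theta_{i,t}}$ and the Floquet flow based at a fixed reference, which produces additional $O(\gd^2|\Delta\theta|)$ bookkeeping corrections that must be carefully absorbed.
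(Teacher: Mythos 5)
Your overall architecture (preliminary bound $|\Delta m_0|\lesssim\gd|\Delta\theta_0|$, then the Wasserstein contraction for the centered part, then a Floquet argument for the normal projection) is the same as the paper's, and the preliminary bounds and the treatment of \eqref{eq:contract W2b Delta tilde mu} are essentially correct. However, there is a genuine gap in your argument for \eqref{eq:encadre delta m}, and the same gap threatens your treatment of \eqref{eq:contract Delta p perp}.

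The problem is that $|\Delta\theta_0|$ is not a small quantity: it lives on the circle $\mathbb{S}_\gd$ of length $T_\gamma/\gd$, so it can be as large as $T_\gamma/(2\gd)$. Your claim that $|\Delta\dot\theta_t|=O(\gd^2|\Delta\theta_0|)$ is obtained by expanding $\proj^\gd$ around $m_{2,t}$, but the expansion \eqref{eq:expansion proj} carries a remainder $\frac{1}{\gd}O(|\Delta m_t|^2)$. With $|\Delta m_t|\lesssim\gd|\Delta\theta_0|$ this remainder alone is of size $\gd|\Delta\theta_0|^2$ in $\Delta\theta_t$ (so a potentially even larger size once differentiated), and $\gd|\Delta\theta_0|^2$ is only $O(\gd|\Delta\theta_0|)$ when $|\Delta\theta_0|=O(1)$. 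For $|\Delta\theta_0|$ of order $\gd^{-1}$, the remainder is not small at all. This is precisely why the paper splits into the two cases $|\Delta\theta_0|\geq\gd^{-1/2}$ (where it uses only the coarse drift bound $|\theta_{i,t}-(\theta_{i,0}+t)|\leq C_4\gd t$ from Lemma~\ref{lem:proj on gamma}, which already gives $||\Delta\theta_t|-|\Delta\theta_0||\leq 2C_4T\gd^{1/2}|\Delta\theta_0|$) and $|\Delta\theta_0|\leq\gd^{-1/2}$ (where the quadratic term can now be absorbed into $\gd^{1/2}|\Delta\theta_0|$). Your claimed $O(\gd|\Delta\theta_0|)$ bound, ``stronger than required,'' is actually a signal that something has gone wrong: the statement of the lemma was calibrated to $\gd^{1/2}$ precisely because the quadratic remainder forces a loss at the threshold $|\Delta\theta_0|\sim\gd^{-1/2}$.

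The same issue resurfaces in your proof of \eqref{eq:contract Delta p perp}. Your scalar-ODE strategy for $\alpha_{i,t}$ is a legitimate alternative to the paper's mild (variation-of-constants) formulation, and it would yield the claimed contraction $e^{-\gl\gd t}$ at leading order. But the ``bookkeeping corrections'' you set aside include a term of order $\gd^2|\Delta\theta_t|^2$ coming from the nonlinear alignment of $p^{\gd,\perp}_{\theta_{i,t}}$ with the Floquet flow (the paper records it as $\delta^2 O(|\Delta\theta_t|^2)$), and this again is not $O(\gd^{3/2}|\Delta\theta_t|)$ unless $|\Delta\theta_t|$ is bounded by $\gd^{-1/2}$. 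In the paper this is dispatched by a separate elementary observation: when $|\Delta\theta_t|$ is large enough, the raw a priori bound $\mathrm{dist}_\Pi(m_{i,t},\gamma)\leq\kappa_2\gd$ from Lemma~\ref{lem:proj on gamma} already gives \eqref{eq:contract Delta p perp} for free, and only the complementary regime requires the refined Floquet argument. You need to add the corresponding case split to make your argument close.

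In short: the contraction machinery and the mild-versus-ODE distinction are fine and amount to a minor stylistic difference, but your proof as written silently assumes $|\Delta\theta_0|$ (and $|\Delta\theta_t|$) is $O(1)$, and the lemma must hold for all $\theta$ on a circle of radius $T_\gamma/(2\gd)$. Restoring the dichotomy $|\Delta\theta|\gtrless\gd^{-1/2}$ is an essential idea, not bookkeeping.
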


\begin{proof}
Define $\kappa_7=16 \kappa_{ 5} \left\vert \dot \gamma \right\vert $, fix a $\kappa_8>0 $ whose value will be chosen later, and suppose that $W(\tilde \mu_{1,0},\tilde \mu_{2,0})\leq \kappa_7 \gd^{ 2}\vert \Delta \theta_0\vert$ and $\left\vert p^\perp_{\theta_{1,0}}(m_{1,0}-\gamma_{\theta_{1,0}})-p^\perp_{\theta_{2,0}}(m_{2,0}-\gamma_{\theta_{2,0}})\right\vert\leq \kappa_8\gd^\frac32 \vert \Delta \theta_0\vert$. 

In what follows, we also fix some $ \varepsilon>0$ and consider some coupling $ \left\lbrace \left( X_{ 1, 0}, Y_{ 1, 0}\right), \left( X_{ 2, 0}, Y_{ 2, 0}\right)\right\rbrace \sim \pi_{ 0}\in \mathcal{ C}(\mu_{ 0, 1}, \mu_{ 2, 0})$ of the initial condition such that 
\begin{equation}
\label{eq:coupling_mu12}
\Lambda(\pi_{ 0}) < W(\mu_{ 1,0},\mu_{ 2, 0}) + \varepsilon.
\end{equation}

Remark first that, since $m_{1,0}-m_{2,0}=m_{1,0}-\gamma_{\theta_{1,0}}+\gamma_{\theta_{1,0}}-\gamma_{\theta_{2,0}}+\gamma_{\theta_{2,0}}-m_{2,0}$ and $m_{i,0}-\gamma_{\theta_{i,0}}=p^\perp_{\theta_{i,0}}(m_{i,0}-\gamma_{\theta_{i,0}})e_{\theta_{i,0}}$ for $i=1,2$, we have
\begin{multline}
\vert \Delta m_0\vert \leq \left\vert \gamma_{\theta_{1,0}}-\gamma_{\theta_{2,0}}\right\vert +\left\vert \left(p^\perp_{\theta_{1,0}}(m_{1,0}-\gamma_{\theta_{1,0}})-p^\perp_{\theta_{2,0}}(m_{2,0}-\gamma_{\theta_{2,0}}) \right)e_{\theta_{2,0}} \right\vert\\
+\left\vert p_{\theta_{1,0}}^\perp(m_{1,0}-\gamma_{\theta_{1,0}})(e_{\theta_{1,0}}-e_{\theta_{2,0}})\right\vert,
\end{multline}
so that
\begin{equation}\label{eq:bound Delta m with Delta p perp}
\vert \Delta m_0\vert \leq \Big( \vert \dot \gamma\vert_\infty +C_\Pi\left(\kappa_8 \gd^\frac12+C_\Pi\kappa_2 \gd)\right) \Big) \delta\vert \Delta \theta_0\vert\leq 2 \delta\vert \dot \gamma\vert \vert \Delta \theta_0\vert,
\end{equation}
for $\gd$ small enough (depending on $\kappa_8$). By definition of $ \kappa_{ 7}$, this means in particular that
\begin{equation}\label{eq:bound W Delta theta}
W(\mu_{1,0},\mu_{2,0})=\max \left( \delta^{ \beta} \left\vert \Delta m_{ 0} \right\vert, W(\tilde{ \mu}_{1,0}, \tilde{ \mu}_{2,0})\right)\leq 2\vert \dot \gamma\vert \gd^{1+\beta} \vert \Delta \theta_0\vert,
\end{equation}
provided that $\gd\leq \left( \frac{2 \left\vert \dot \gamma \right\vert}{ \kappa_{ 7}}\right)^{ \frac{ 1}{ 1- \beta}}$. Now remark that
\begin{equation}
\left \vert \left\vert \Delta \theta_t \right\vert - \left\vert \Delta \theta_0 \right\vert \right\vert
\leq \left \vert \theta_{1,t}-(\theta_{1,0}+ t)\right\vert+\left \vert \theta_{2,t}-(\theta_{2,0}+ t)\right\vert,
\end{equation}
and relying on similar arguments as in Lemma~\ref{lem:proj on gamma}, we have $ \vert \theta_{i,t}-(\theta_{i,0}+ t)\vert \leq C_4 \gd t$, for $t\in \left[0,\frac{2T}{\gd}\right]$, $i=1,2$, and some positive constant $C_4$.
So, for $\vert \Delta \theta_0\vert \geq \gd^{-\frac{1}{2}}$, we have $ \left \vert \left\vert \Delta \theta_t \right\vert - \left\vert \Delta \theta_0 \right\vert \right\vert \leq 2 C_4 T \gd^\frac{1}{2}\left\vert \Delta \theta_0 \right\vert$. Suppose now that $\vert \Delta \theta_0\vert \leq \gd^{-\frac{1}{2}}$. From Lemma~\ref{lem:proj} and Lemma~\ref{lem:proj on gamma} we deduce that
\begin{equation}
\label{eq:Delta_theta_t}
\Delta \theta_t = \frac{ 1}{ \delta} \left(p_{\theta_{2,t}}(\Delta m_t)+O(\gd \vert \Delta m_t\vert+\vert \Delta m_t\vert^2)\right)= \frac{ 1}{ \delta} \left(p_{\theta_{2,0}+t}(\Delta m_t)+O(\gd \vert \Delta m_t\vert+\vert \Delta m_t\vert^2)\right).
\end{equation}



The first point is to be able to replace the term $p_{\theta_{2,0}+t}(\Delta m_t)$ by $p_{\theta_{2,0}}(\Delta m_0)$, namely to prove that there exists some constant $C_{ 5}^{ \prime}$, independent of $ \varepsilon$ such that for all $t\in \left[0, \frac{ 2T}{ \delta}\right]$
\begin{align}
\label{eq:p_Delta_Delta0}
\big\vert p_{\theta_{2,0}+ t}(\Delta m_t)-&p_{\theta_{2,0}}(\Delta m_0) \big\vert \leq C_{ 5}^{ \prime} \left(\gd^{ 2} \vert \Delta \theta_0\vert + \varepsilon\right)\, .
\end{align}
Indeed, noting first that $p_{\theta_{2,0}+t}(\Pi^{ \delta}(\theta_{2,0}+t,\theta_{2,0})\Delta m_0)=p_{\theta_{2,0}}(\Delta m_0)$, we obtain from \eqref{eq:Dmt_XY} and \eqref{eq:bound W2b delta mu} that for $t\in\left[0,\frac{2T}{\gd}\right]$
\begin{align}
\big\vert p_{\theta_{2,0}+ t}(\Delta m_t)-p_{\theta_{2,0}}(\Delta m_0) \big\vert &= \big\vert p_{\theta_{2,0}+ t}(\Delta m_t)-p_{\theta_{2,0}+t}(\Pi^{ \delta}(\theta_{2,0}+t,\theta_{2,0})\Delta m_0) \big\vert \nonumber \\
&\leq C_\Pi (c_1+c_2+c_3) \kappa_3  \gd^{2-\beta} t\,  W(\mu_{1,0},\mu_{2,0})\nonumber\\
&+C_\Pi \gd \int_0^t c_4\left( \left(\bE\left[\left(\Delta \tilde X_{s}\right)^2\right]\right)^\frac12+b\left(\bE\left[\left(\Delta \tilde Y_{s}\right)^2\right]\right)^{\frac12}\right)\dd s\, . \label{aux:deltap}
\end{align}
By definition of the coupling $ \pi_{ 0}$ in \eqref{eq:coupling_mu12} and the assumption $W(\tilde \mu_{1,0},\tilde \mu_{2,0})\leq \kappa_7 \gd^{ 2}\vert \Delta \theta_0\vert$, we have $\left(\bE\left[\left(\Delta \tilde X_{0}\right)^2\right]\right)^{\frac12} \leq \kappa_7\gd^{ 2} \vert \Delta \theta_0\vert + \varepsilon$ and $b\left(\bE\left[\left(\Delta \tilde Y_{0}\right)^2\right]\right)^{\frac12}\leq \kappa_7 \gd^{ 2}  \vert \Delta \theta_0\vert + \varepsilon$. Hence, using \eqref{eq:contract bar X}, \eqref{eq:contract bar Y},  the integral term in \eqref{aux:deltap} can be bounded above by
\begin{align*}
C_{ 5} \gd t \left(\gd^{ 2} \vert \Delta \theta_0\vert  + \varepsilon\right)
\end{align*}
for some constant $C_{ 5}>0$, independent of $ \varepsilon$. Thus, \eqref{eq:p_Delta_Delta0} is a direct consequence of \eqref{eq:bound W Delta theta}.

Secondly, remark that
\begin{equation}
\Delta \theta_0 = \frac{ 1}{ \delta} \left(p_{\theta_{2,0}}(\Delta m_0) +O(\gd \vert \Delta m_0\vert+\vert \Delta m_0\vert^2)\right),
\end{equation}
and that \eqref{eq:bound delta mt} implies that $\vert \Delta m_t\vert \leq C_6\vert \Delta m_0\vert$ for some constant $C_6$. Gathering these estimates, \eqref{eq:Delta_theta_t} and \eqref{eq:p_Delta_Delta0}, we obtain (recall that $\vert \Delta m_0 \vert \leq 2 \gd \vert \dot \gamma\vert \vert \Delta \theta_0\vert$ and $\vert \Delta \theta_0\vert \leq \gd^{-\frac{1}{2}}$):
\begin{equation}
\Delta \theta_t - \Delta \theta_0= O\left(\gd^\frac{1}{2} \vert \Delta \theta_0\vert + \frac{ \varepsilon}{ \delta}\right).
\end{equation}
Since obviously neither $ \Delta \theta_{ t}$ nor $ \Delta \theta_{ 0}$ depend on $\varepsilon$, one can make $ \varepsilon \searrow 0$ in the previous estimate and obtain \eqref{eq:encadre delta m} with a $\kappa_9$ that does not depend on $\kappa_8$, for $\gd$ small enough (depending on $\kappa_8$). 

It remains to prove \eqref{eq:contract W2b Delta tilde mu} and \eqref{eq:contract Delta p perp}. Using \eqref{eq:coupling_mu12}, \eqref{eq:bound W Delta theta} and the assumption on the initial condition $X_{ 0}$ into \eqref{eq:contract bar X}, we obtain
\begin{align*}
\left(\bE\left[ \left(\Delta \tilde X_t\right)^2\right]\right)^{\frac12}
\leq \left(e^{-(K-\kappa_4 \gd)t} \kappa_7 + 2\kappa_5 \vert \dot \gamma\vert \right)\gd^{2} \vert \Delta \theta_0\vert+ \varepsilon \left(e^{-(K-\kappa_4 \gd)t} + \kappa_5 \gd^{1-\beta}\right)\, .
\end{align*}
Using \eqref{eq:encadre delta m}, we have
\begin{equation}
\left\vert \Delta\theta_{ 0} \right\vert \leq \frac{ \left\vert \Delta\theta_{ t} \right\vert}{ 1- \kappa_{ 9} \delta^{ \frac{ 1- \beta}{ 2}}}
\end{equation}
which gives
\begin{equation}
\left(\bE\left[ \left(\Delta \tilde X_t\right)^2\right]\right)^{\frac12}
\leq  \left(e^{-(K-\kappa_4 \gd)t} + \frac{ 2\kappa_5}{ \kappa_{ 7}} \vert \dot \gamma\vert \right) \kappa_{ 7}\delta^{ 2}  \frac{ \left\vert \Delta\theta_{ t} \right\vert}{ 1- \kappa_{ 9} \delta^{ \frac{ 1- \beta}{ 2}}}  + \varepsilon \left(e^{-(K-\kappa_4 \gd)t} + \kappa_5 \gd^{1-\beta}\right)\, .
\end{equation}
Choosing $ \delta\leq \frac{ 1}{ 2 \kappa_{ 9}}$, we have
\begin{equation}
\left(\bE\left[ \left(\Delta \tilde X_t\right)^2\right]\right)^{\frac12}
\leq  \left(e^{-(K-\kappa_4 \gd)t} + \frac{ 2\kappa_5 \vert \dot \gamma\vert}{ \kappa_{ 7}} \right) 2\kappa_{ 7}\delta^{ 2}  \left\vert \Delta\theta_{ t} \right\vert + \varepsilon \left(e^{-(K-\kappa_4 \gd)t} + \kappa_5 \gd^{1-\beta}\right)\, .
\end{equation}
Recalling that $ \kappa_{ 7}= 16 \kappa_{ 5} \left\vert \dot \gamma \right\vert$ and choosing $ \delta\leq \frac{ K}{ 2 \kappa_{ 4}}$ and $\gd\leq \frac{KT}{6\ln 2}$ (so that $e^{ - \frac{ K}{ 2} \frac{T}{2}} \leq \frac{ 1}{ 8}$), we obtain for $t\in \left[ \frac{ T}{ \delta}, \frac{ 2T}{ \delta}\right]$,
\begin{equation}
\left(\bE\left[ \left(\Delta \tilde X_t\right)^2\right]\right)^{\frac12}
\leq \frac{ \kappa_{ 7}}{ 2}\delta^{ 2}  \left\vert \Delta\theta_{ t} \right\vert + \varepsilon \left(e^{-(K-\kappa_4 \gd)t} + \kappa_5 \gd^{1-\beta}\right)\, .
\end{equation}
Concerning \eqref{eq:contract bar Y}, by similar arguments, we have
\begin{multline}
b\left( \bE\left[(\Delta \tilde Y_{t})^2\right]\right)^{\frac12}
\leq \left[e^{-\frac{b\gd}{c}t}\left(1 + b \kappa_6\gd\right) + \frac{ 2 \kappa_{ 5} \left\vert \dot \gamma \right\vert}{ \kappa_{ 7}}\right]2 \kappa_7 \gd^{ 2}\vert \Delta \theta_t\vert\\ + \varepsilon \left(\kappa_5 \gd^{1-\beta}  + e^{-\frac{b\gd}{c}t} \left(1+ b \kappa_6\gd\right)\right)\, ,
\end{multline}
which gives, by definition of $ \kappa_{ 7}$ and $T$ (recall \eqref{hyp T}) and choosing $ \delta\leq \frac{ 1}{ b \kappa_{ 6}}$, for $t\in \left[ \frac{ T}{ \delta}, \frac{ 2T}{ \delta}\right]$:
\begin{equation}
b\left( \bE\left[(\Delta \tilde Y_{t})^2\right]\right)^{\frac12}
\leq \frac{ \kappa_7}{ 2} \gd^{ 2}\vert \Delta \theta_t\vert + \varepsilon \left(\kappa_5 \gd^{1-\beta}  + e^{-\frac{b\gd}{c}t} \left(1+ b \kappa_6\gd\right)\right)\, .
\end{equation}
Similar estimates are \emph{a fortiori} valid for the infimum $ W \left( \tilde{ \mu}_{ 1, t}, \tilde{ \mu}_{ 2, t}\right)$ and letting $ \varepsilon\searrow 0$, we obtain the result \eqref{eq:contract W2b Delta tilde mu}.

Now, recalling Lemma~\ref{lem:proj on gamma},
\begin{equation}
\left\vert p^\perp_{\theta_{1,t}}(m_{1,t}-\gamma_{\theta_{1,t}})-p^\perp_{\theta_{2,t}}(m_{2,t}-\gamma_{\theta_{2,t}})\right\vert \leq 2 C_\Pi \kappa_2 \gd,
\end{equation}
so that \eqref{eq:contract Delta p perp} is directly valid for $\vert \Delta \theta_t\vert\geq 2 C_\Pi \kappa_2 \gd^{ \frac32}$. Suppose now that $\vert \Delta \theta_t\vert\leq 2 C_\Pi \kappa_2 \gd^{ \frac32}$.
Relying again on Lemma~\ref{lem:proj on gamma}, and using the fact that $p ^\perp_{\theta_{2,t}}(\dot \gamma_{\theta_{2,t}})=0$, we have for $t\in\left[0,\frac{2T}{\gd}\right]$,
\begin{equation}
p^\perp_{\theta_{1,t}}(m_{1,t}-\gamma_{\theta_{1,t}})-p^\perp_{\theta_{2,t}}(m_{2,t}-\gamma_{\theta_{2,t}})
=p^\perp_{\theta_{2,t}}(\Delta m_t)+\gd^{ 2} O(\vert \Delta \theta_t\vert)+ \delta^{ 2}O(\vert \Delta \theta_t\vert^2),
\end{equation}
and since $\vert \theta_{2,t}-(\theta_{2,0}+ t)\vert \leq C_4 \gd t$ and
\begin{equation}
\vert \Delta m_t\vert\leq \gd^{-\beta} W(\mu_{1,t},\mu_{2,t})\leq \gd^{-\beta}\kappa_3 W(\mu_{1,0},\mu_{2,0})\leq 2 \delta\vert \dot\gamma\vert  \vert \Delta \theta_0\vert,
\end{equation} 
we obtain
\begin{equation}
p^\perp_{\theta_{1,t}}(m_{1,t}-\gamma_{\theta_{1,t}})-p^\perp_{\theta_{2,t}}(m_{2,t}-\gamma_{\theta_{2,t}})
=p^\perp_{\theta_{2,0}+ t}(\Delta m_t)+O\left(\gd^\frac32 \vert \Delta \theta_t\vert\right).
\end{equation}
Similarly,
\begin{equation}
p^\perp_{\theta_{1,0}}(m_{1,0}-\gamma_{\theta_{1,0}})-p^\perp_{\theta_{2,0}}(m_{2,0}-\gamma_{\theta_{2,0}})
=p^\perp_{\theta_{2,0}}(\Delta m_0)+O\left(\gd^\frac32 \vert \Delta \theta_t\vert\right),
\end{equation}
and using the decomposition \eqref{eq:decomp Delta m} and relying on similar estimates as made above,
\begin{equation}
p^\perp_{\theta_{2,0}+ t}(\Delta m_t) = e^{-\gl \gd t}p^\perp_{\theta_{2,0}}(\Delta m_0)+O\left(\gd^\frac32 \vert \Delta \theta_t\vert\right).
\end{equation}
So, using again \eqref{eq:encadre delta m}, there exists a constant $C_7$ that does not depend on $\kappa_8$ such that
\begin{equation}
\left\vert p^\perp_{\theta_{1,t}}(m_{1,t}-\gamma_{\theta_{1,t}})-p^\perp_{\theta_{2,t}}(m_{2,t}-\gamma_{\theta_{2,t}})\right\vert \leq \left(2 e^{-\gl \gd t}  +\frac{C_7}{\kappa_8}\right) \kappa_8\gd^\frac32\vert\Delta \theta_t\vert,
\end{equation}
which implies \eqref{eq:contract Delta p perp}, taking $\kappa_8\geq 2 C_7$, and recalling \eqref{hyp T}.
\end{proof}

\section{Fixed point}
\label{sec:fixed_point}
Recall the definitions of the constants $ \kappa_{ i}$, $i=0, \ldots, 8$ appearing in Section~\ref{sec:closeness} and Section~\ref{sec:contract}. For $\gd\leq \gd_4$ (recall Lemma~\ref{lem:encadre}), consider the space $\cF=\cF(\gd)$ composed of the functions $f: \mathbb{ S}_{ \delta}\rightarrow \cP_2$ that satisfy
\begin{enumerate}
\item 
\begin{equation}\label{def f moments}
\int_{\bbR^2}x^6 f(\theta)( {\rm d}z)\leq \kappa^x_0 \qquad \text{and} \qquad \int_{\bbR^2} y^6 f(\theta)( {\rm d} z)  \leq \kappa_0^y, \qquad \text{for all } \theta  \in \mathbb{ S}_{ \delta},
\end{equation}
\item
\begin{equation}\label{def f close q0}
W(\tilde{f(\theta)},q_0)\leq \kappa_1\gd,  \quad  \text{for all } \theta  \in \mathbb{ S}_{ \delta},
\end{equation}
\item
\begin{equation}\label{def f close gamma}
\int_{\bbR^2} z f(\theta)( {\rm d}z) \in E_\theta(\kappa_2 \gd), \quad  \text{for all } \theta  \in \mathbb{ S}_{ \delta},
\end{equation}
\item 
\begin{equation}\label{def f W_2b lipsch}
W(\tilde{f(\theta_1)},\tilde {f(\theta_2)})\leq \kappa_7 \gd^{ 2} \vert \theta_1-\theta_2\vert ,  \quad  \text{for all } \theta_1,\theta_2  \in \mathbb{ S}_{ \delta},
\end{equation}
\item
\begin{multline}
\left\vert  p^\perp_{\theta_1}\left(\int_{\bbR ^2} z f(\theta_1)({\rm d}z)-\gamma_{\theta_1}\right) - p^\perp_{\theta_2}\left(\int_{\bbR ^2} z f(\theta_2)({\rm d}z)-\gamma_{\theta_2}\right)  \right\vert \leq \kappa_8 \gd^\frac32 \vert \theta_1-\theta_2\vert, \\
  \text{for all } \theta_1,\theta_2  \in \mathbb{ S}_{ \delta}.
\end{multline}
\end{enumerate}
We define, for $\theta\in \mathbb{ S}_{ \delta}$, the distance $W_\theta=W_\theta(\gd,b,\beta)$ on probability measures $\nu$ that satisfy $\int_{\bbR^2} z\nu( {\rm d}z)\in E_\theta(\kappa_2\gd)$ as follows: 
\begin{align}
W_\theta(\nu_1,\nu_2):=\max\Bigg\{&\gd^\beta\left|p^\perp_\theta\left(\int_{\bbR^2}z\dd \nu_1(z)-\int_{\bbR^2}z\dd \nu_2(z)\right) \right|, W \left(\tilde{ \nu}_{ 1}, \tilde{ \nu}_{ 2}\right)\Bigg\},
\end{align}
where $W$ is defined in \eqref{eq:def_W_1}. We consider the distance $\dist_\cF$ on $\cF$ defined as
\begin{equation}
\dist_\cF(f_1,f_2)=\sup_{\theta\in \mathbb{ S}_{ \delta}} W_\theta(f_1(\theta),f_2(\theta)).
\end{equation}
Remark that, on $ \mathcal{ F}$, $\frac{1}{C_\Pi}W\leq W_\theta\leq C_\Pi W$ and this distance is thus equivalent to the distance $\sup_{\theta\in \mathbb{ S}_{ \delta}}W_2(f_1(\theta),f_2(\theta))$. $\cF$ is then a closed subspace of $C( \mathbb{ S}_{ \delta},\cP_2)$ (where $\cP_2$ is endowed with the standard Wasserstein-2 distance), and is thus complete when endowed with $\dist_\cF$.

\medskip 

For $t\in\left[0,\frac{2T}{\gd}\right]$ we consider the map $g_{t,f}: \mathbb{ S}_{ \delta}\rightarrow \mathbb{ S}_{ \delta}$ as
\begin{equation}
g_{t,f}(\theta)=\proj\left( \int_{\bbR^2}z \mu_t({\rm d}z)\right),
\end{equation}
where $\mu_t$ is the solution of \eqref{eq:PDER kinetic FhN} with $\mu_0=f(\theta)$. Lemma \ref{lem:encadre} shows that, for $\gd$ small enough, $g_{t,f}$ is a bijection for any $f\in\cF$.
We consider the map $\Phi_t:\cF\rightarrow C( \mathbb{ S}_{ \delta},\cP_2)$ defined as
\begin{equation}
\Phi_t(\theta)=\mu_{ \frac{ t}{ \delta}},
\end{equation}
where $\mu$ is the solution of \eqref{eq:PDER kinetic FhN} with $\mu_0=f\left(g^{-1}_{t,f}(\theta)\right)$.

\begin{lemma}\label{lem:lipsch}
If $\gd\leq \gd_4$ and $f\in \cF$, then $\Phi_t(f)\in \cF$ for all $t\in \left[T,2T\right]$.
\end{lemma}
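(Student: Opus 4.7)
The plan is to verify, one by one, that each of the five defining conditions of $\cF$ is inherited by $\Phi_t(f)$ when $f\in \cF$ and $t\in[T,2T]$. Fix $f\in \cF$ and $\theta\in \mathbb{S}_\delta$, let $\phi := g_{t,f}^{-1}(\theta)$, and let $s\mapsto \mu_s$ denote the solution to \eqref{eq:PDER kinetic FhN} with initial condition $\mu_0 = f(\phi)$, so that $\Phi_t(f)(\theta) = \mu_{t/\delta}$. Since $f(\phi)$ already satisfies the pointwise properties \eqref{def f moments}--\eqref{def f close gamma}, preservation of \eqref{def f moments} is exactly Lemma~\ref{lem:moments X Y} and preservation of \eqref{def f close q0} is exactly Lemma~\ref{lem:close to q0}; both apply up to time $T_e(R_0)$, which is $+\infty$ by Lemma~\ref{lem:proj on gamma}. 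Condition \eqref{def f close gamma} is nearly tautological: by the very definition of $\Phi_t$ we have $\proj(\int z \mu_{t/\delta}({\rm d}z)) = g_{t,f}(\phi) = \theta$, while Lemma~\ref{lem:proj on gamma} ensures that $\int z \mu_{t/\delta}({\rm d}z)$ stays at distance $\leq \kappa_2 \delta$ from $\gamma_\theta$ along the fibre $E_\theta^\delta$.

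For the two Lipschitz-type conditions \eqref{def f W_2b lipsch} and the analogous one on the perpendicular projection of the mean, the strategy is to appeal to Lemma~\ref{lem:encadre}. Pick $\theta_1,\theta_2\in \mathbb{S}_\delta$, set $\phi_i := g_{t,f}^{-1}(\theta_i)$ and $\mu_{i,0} := f(\phi_i)$. Because $f\in \cF$, the initial data $(\mu_{1,0},\mu_{2,0})$ satisfy the moment and Gaussian-proximity bounds required by Lemma~\ref{lem:encadre}, together with the two hypotheses $W(\tilde\mu_{1,0},\tilde\mu_{2,0})\leq \kappa_7\delta^2|\Delta\phi|$ and the corresponding bound on the difference of perpendicular projections, where $|\Delta\phi| := |\phi_1-\phi_2|$. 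Invoking Lemma~\ref{lem:encadre} at time $s = t/\delta \in [T/\delta, 2T/\delta]$ gives \eqref{eq:contract W2b Delta tilde mu} and \eqref{eq:contract Delta p perp} with $|\Delta\theta_s| = |\theta_1-\theta_2|$, which are precisely \eqref{def f W_2b lipsch} and the remaining projection-Lipschitz condition defining~$\cF$.

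The one place where care is needed is the passage from $|\Delta\phi|$ (the spacing at time $0$) to $|\theta_1-\theta_2|$ (the spacing at time $s$). This is exactly what the sandwich estimate \eqref{eq:encadre delta m} provides: the two quantities differ only by a multiplicative factor $1+O(\delta^{1/2})$, so the hypotheses of Lemma~\ref{lem:encadre} stated in terms of $|\Delta\phi|$ transfer to conclusions stated in terms of $|\theta_1-\theta_2|$ up to harmless constants absorbed into the choice of $\delta_4$. This same control is what makes $g_{t,f}$ a bijection of $\mathbb{S}_\delta$, as already invoked in the definition of $\Phi_t$. I do not expect any genuine difficulty here: the lemma is essentially a bookkeeping statement that repackages Lemmas~\ref{lem:moments X Y}, \ref{lem:close to q0}, \ref{lem:proj on gamma} and \ref{lem:encadre} in the functional framework of $\cF$, with the main (and already handled) technical input being the careful calibration of the constants $\kappa_7$, $\kappa_8$ and the choice of $T$ satisfying \eqref{hyp T}.
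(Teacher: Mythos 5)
Your proof is correct and follows exactly the same route as the paper, which simply cites Lemmas~\ref{lem:moments X Y}, \ref{lem:close to q0}, \ref{lem:proj on gamma}, \ref{lem:bound delta bar X delta bar Y} and \ref{lem:encadre} without spelling out the verification; you have merely filled in the bookkeeping that matches each of the five defining conditions of $\cF$ to the corresponding lemma, which is what the paper's one-line proof implicitly asks the reader to do. Your remark about passing from $|\Delta\phi|$ to $|\theta_1-\theta_2|$ is harmless but unnecessary: Lemma~\ref{lem:encadre}'s conclusion is already phrased in terms of $|\Delta\theta_t|$, so the transfer is built into its statement rather than something you need to redo.
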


\begin{proof}
This is a direct consequence of Lemma~\ref{lem:moments X Y}, Lemma~\ref{lem:close to q0}, Lemma~\ref{lem:proj on gamma}, Lemma~\ref{lem:bound delta bar X delta bar Y} and Lemma~\ref{lem:encadre}.
\end{proof}

\begin{lemma}\label{lem:X contracts}
There exist $\gd_5>0$ and $\kappa_{10}>0$ such that if $\gd\leq \gd_5$ and $f_1,f_2,\in \cF$, then for all $t\in[T,2T]$ we have
\begin{equation}
\dist_\cF(\Phi_t(f_1),\Phi_t(f_2))\leq  \left(\max\left\{e^{-\gl t}, e^{-\frac{b}{c}t} \right\} + \kappa_{10} \left(\gd^\beta+\gd^{1-\beta}\right) \right)\dist_\cF(f_1,f_2).
\end{equation} 
\end{lemma}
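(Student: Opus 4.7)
Fix $f_1, f_2 \in \cF$, write $\rho := \dist_\cF(f_1, f_2)$, and fix $\theta\in\mathbb{S}_\delta$. Let $\theta_0^{(i)}$ be the preimage of $\theta$ under the appropriate reparametrization, so that the solution $\mu_{i,s}$ of \eqref{eq:PDER kinetic FhN} with $\mu_{i,0}=f_i(\theta_0^{(i)})$ satisfies $\Phi_t(f_i)(\theta)=\mu_{i,t/\gd}$ and projects to $\theta$ at time $s=t/\gd$. By Lemma~\ref{lem:lipsch} one has $\Phi_t(f_i)\in\cF$, so in particular both means at time $t/\gd$ lie in $E_\theta(\kappa_2\gd)$ and $W_\theta(\mu_{1,t/\gd},\mu_{2,t/\gd})$ is well-defined. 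My plan is to bound separately the two pieces of this quantity, the weighted projection $\gd^\beta|p^\perp_\theta(\Delta m_{t/\gd})|$ and the centered part $W(\tilde\mu_{1,t/\gd},\tilde\mu_{2,t/\gd})$, using the triangle inequality through an intermediate measure $\nu_s$ with $\nu_0=f_1(\theta_0^{(2)})$.

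The preliminary step is to show $|\Delta\theta_0|:=|\theta_0^{(1)}-\theta_0^{(2)}|\leq C\gd^{-1-\beta}\rho$. Since $\mu_{2,0}$ and $\nu_0$ share the initial angle $\theta_0^{(2)}$, one has $W(\nu_0,\mu_{2,0})\leq C\rho$ (same-angle comparison reduces $W_{\theta_0^{(2)}}$ to $W$), and Lemma~\ref{lem:bound delta bar X delta bar Y} then controls the final mean difference $|\int z\,\nu_{t/\gd}(\dd z)-\int z\,\mu_{2,t/\gd}(\dd z)|\leq C\gd^{-\beta}\rho$. Converting this mean difference into a difference of projections through \eqref{eq:expansion proj} costs one extra factor $\gd^{-1}$, and since the Lipschitz constant of the inverse reparametrization is of order $1$, the announced bound on $|\Delta\theta_0|$ follows.

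The pair $(\mu_{1,s},\nu_s)$ involves the \emph{same} function $f_1$ at two different initial angles: the Lipschitz bounds \eqref{def f W_2b lipsch} and the $p^\perp$-Lipschitz property of $\cF$ are exactly the hypotheses of Lemma~\ref{lem:encadre}, yielding $W(\tilde\mu_{1,t/\gd},\tilde\nu_{t/\gd})\leq\kappa_7\gd^2|\Delta\theta^{(1,\nu)}_{t/\gd}|$ together with the analogous $\gd^{3/2}$ bound for the $p^\perp$ piece. Thanks to \eqref{eq:encadre delta m}, $|\Delta\theta^{(1,\nu)}_{t/\gd}|$ is comparable to $|\Delta\theta_0|$, so these contributions are $\leq C\gd^{1-\beta}\rho$ for the centered part and $\leq C\gd^{\beta}\rho$ for the scaled projection. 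The pair $(\nu_s,\mu_{2,s})$ has \emph{same} initial angle and different functions $f_1,f_2$: Lemma~\ref{lem:bound delta bar X delta bar Y} applied at $s=t/\gd$ yields $W(\tilde\nu_{t/\gd},\tilde\mu_{2,t/\gd})\leq e^{-bt/c}\rho+\kappa_5\gd^{1-\beta}\rho$ (the $\tilde X$-prefactor $e^{-(K-\kappa_4\gd)t/\gd}$ being super-exponentially small for $t\in[T,2T]$), while the Floquet contraction \eqref{eq:contract p perp_delta} applied to the mild formula \eqref{eq:mild mt-gammat} produces the $e^{-\gl t}$ decay for the scaled projection, with the nonlinear remainder $g(s)$ controlled exactly as in Step~1 of the proof of Lemma~\ref{lem:bound delta bar X delta bar Y}. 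Combining the two triangle contributions and taking the maximum yields the claim.

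The main obstacle is the preliminary control of $|\Delta\theta_0|$: one must carefully propagate the $\gd^\beta$-weighted initial distance $\rho$ through the nonlinear inverse of the reparametrization, and the interplay between the $\gd^{-1}$ factor in \eqref{eq:expansion proj} and the $\gd^{-\beta}$ weighting of the mean in $W$ produces the two competing error terms $\gd^\beta$ and $\gd^{1-\beta}$ in the final estimate. The assumption $\beta\in(0,1)$ is precisely what ensures both errors are $o(1)$ as $\gd\to 0$, so that combined with the definition \eqref{hyp T} of $T$ one obtains a strict contraction of $\Phi_t$ on $(\cF,\dist_\cF)$ for every $t\in[T,2T]$.
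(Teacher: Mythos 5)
Your decomposition is exactly the one the paper uses: introduce the intermediate solution $\nu$ starting from $f_1(\theta^{(2)}_0)$, split both pieces of $W_\theta$ by the triangle inequality, treat the same-function pair $(\mu_1,\nu)$ via the $\cF$-Lipschitz properties and the same-angle pair $(\nu,\mu_2)$ via Lemma~\ref{lem:bound delta bar X delta bar Y} plus Floquet contraction. The only organizational difference is that the paper avoids your preliminary bound on $\vert\Delta\theta_0\vert$: since $\theta_{1,t/\gd}=\theta_{2,t/\gd}=\theta$ by construction, it writes $\vert\theta_{1,t/\gd}-\theta_{3,t/\gd}\vert=\vert\theta_{2,t/\gd}-\theta_{3,t/\gd}\vert$ and bounds the latter directly through the final mean difference and \eqref{eq:proj_lip}, and it invokes Lemma~\ref{lem:lipsch} (i.e.\ $\Phi_t(f_1)\in\cF$) instead of re-applying Lemma~\ref{lem:encadre} to $(\mu_1,\nu)$; your route is valid but takes an extra step. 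Two small remarks: (i) your stated bound $C\gd^\beta\rho$ for the scaled projection of the $(\mu_1,\nu)$ piece should be $C\gd^{1/2}\rho$ — namely $\gd^\beta\cdot\kappa_8\gd^{3/2}\cdot\vert\Delta\theta_{t/\gd}\vert$ with $\vert\Delta\theta_{t/\gd}\vert\lesssim\gd^{-1-\beta}\rho$ — which is still $O(\gd^\beta+\gd^{1-\beta})\rho$ so the conclusion stands; (ii) the paper's treatment of the $p^\perp$ piece for $(\nu,\mu_2)$ requires a case split on whether $\vert\theta_{3,t/\gd}-\theta_{2,t/\gd}\vert$ exceeds $\gd^{-1/2}$ in order to absorb the quadratic error from the projection expansion, which your sketch glosses over; spelling that out is needed to close the argument.
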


\begin{proof}
Fix a $t\in [T,2T]$. For $\theta\in \mathbb{ S}_{ \delta}$, consider solutions $\mu_{i,s}$ to \eqref{eq:PDER kinetic FhN} starting from $\mu_{i,0}=f_i(\theta_{i,0})$ and such that $\theta_{i,\frac{t}{\gd}}=\theta$, $i=1,2$. Our aim is to bound $W_\theta\left(\mu_{1,\frac{t}{\gd}},\mu_{2,\frac{t}{\gd}}\right)$.
To do this we consider also the solution $\mu_{3,s}$ starting from $f_1(\theta_{2,0})$.
Remark first that
\begin{equation}
W_\theta\left(\tilde \mu_{1,\frac{t}{\gd}},\tilde \mu_{2,\frac{t}{\gd}}\right)=W\left(\tilde \mu_{1,\frac{t}{\gd}},\tilde \mu_{2,\frac{t}{\gd}}\right)\leq W\left(\tilde \mu_{1,\frac{t}{\gd}},\tilde \mu_{3,\frac{t}{\gd}}\right)+W\left(\tilde \mu_{3,\frac{t}{\gd}},\tilde \mu_{2,\frac{t}{\gd}}\right).
\end{equation}
On one hand,  
\begin{align}
W\left(\tilde \mu_{1,\frac{t}{\gd}},\tilde \mu_{3,\frac{t}{\gd}}\right)
&=W\left(\tilde{\Phi_t(f_1)\left(\theta_{1,\frac{t}{\gd}}\right)},\tilde{ \Phi_t(f_1)\left(\theta_{3,\frac{t}{\gd}}\right)}\right) \nonumber\\
&\leq  \kappa_7 \gd^{ 2} \left\vert \theta_{1,\frac{t}{\gd}}- \theta_{3,\frac{t}{\gd}}\right\vert\, ,\ \text{ (by Lemma~\ref{lem:lipsch})} \nonumber\\
&=\kappa_7 \gd^{ 2} \left\vert \theta_{2,\frac{t}{\gd}}- \theta_{3,\frac{t}{\gd}}\right\vert\nonumber\\
&\leq C_\proj  \kappa_7 \gd \left\vert m_{2,\frac{t}{\gd}}- m_{3,\frac{t}{\gd}}\right\vert\, , \ \text{(using \eqref{eq:proj_lip})}\nonumber\\
&\leq C_\proj  \kappa_7 \gd^{1-\beta}\,  W\left(\mu_{2,\frac{t}{\gd}},\mu_{3,\frac{t}{\gd}}\right)\nonumber\\
& \leq  C_\proj \kappa_3 \kappa_7 \gd^{1-\beta} \,  W\left(\mu_{2,0},\mu_{3,0}\right)\, , \ \text{(by Lemma~\ref{lem:bound delta bar X delta bar Y})}\nonumber\\
& \leq C_\Pi C_\proj \kappa_3 \kappa_7 \gd^{1-\beta}\, W_\theta(f_2(\theta_{2,0}),f_1(\theta_{2,0}))\, ,
\end{align}
while on the other hand, Lemma~\ref{lem:bound delta bar X delta bar Y} implies that
\begin{align}
W\left(\tilde \mu_{3,\frac{t}{\gd}},\tilde \mu_{2,\frac{t}{\gd}}\right) &\leq  \max\left( e^{-(K- \delta\kappa_4)\frac{t}{\gd}},e^{-\frac{b}{c }t }(1+ b \kappa_6 \gd)\right)W\left(\tilde \mu_{3,0},\tilde \mu_{2,0}\right)\nonumber\\
& \qquad \qquad \qquad \qquad\qquad \qquad \qquad\qquad \qquad +\kappa_5\gd^{1-\beta} \, W(\mu_{3,0},\mu_{2,0})\nonumber\\
& \leq \left( e^{-\frac{b}{c }t }+ 2 C_\Pi \kappa_5 \gd^{1-\beta}\right)W_\theta (f_1(\theta_{2,0}),f_2(\theta_{2,0})),
\end{align}
where we have taken $\gd$ small enough.
So, for $\gd$ small enough,
\begin{equation}
W_\theta \left(\tilde \mu_{1,\frac{t}{\gd}},\tilde \mu_{2,\frac{t}{\gd}}\right)\leq \left(e^{-\frac{b}{c}t}+O\left(\gd^{1-\beta}\right)\right)\dist_\cF(f_1,f_2).
\end{equation}
Similarly, we have the decomposition, since $\theta_{1,\frac{t}{\gd}}=\theta_{2,\frac{t}{\gd}}= \theta$,
\begin{align}
\left\vert p^\perp_\theta\left(m_{1,\frac{t}{\gd}}-m_{2,\frac{t}{\gd}}\right)\right\vert &=  \left\vert p^\perp_{\theta_{1,\frac{t}{\gd}}}\left(m_{1,\frac{t}{\gd}}-\gamma_{\theta_{1,\frac{t}{\gd}}}\right)-p^\perp_{\theta_{2,\frac{t}{\gd}}}\left(m_{2,\frac{t}{\gd}}-\gamma_{\theta_{2,\frac{t}{\gd}}}\right)\right\vert \nonumber\\
&\leq  \left\vert p^\perp_{\theta_{1,\frac{t}{\gd}}}\left(m_{1,\frac{t}{\gd}}-\gamma_{\theta_{1,\frac{t}{\gd}}}\right)-p^\perp_{\theta_{3,\frac{t}{\gd}}}\left(m_{3,\frac{t}{\gd}}-\gamma_{\theta_{3,\frac{t}{\gd}}}\right)\right\vert\nonumber\\
&\quad  +\left\vert p^\perp_{\theta_{3,\frac{t}{\gd}}}\left(m_{3,\frac{t}{\gd}}-\gamma_{\theta_{3,\frac{t}{\gd}}}\right)-p^\perp_{\theta_{2,\frac{t}{\gd}}}\left(m_{2,\frac{t}{\gd}}-\gamma_{\theta_{2,\frac{t}{\gd}}}\right)\right\vert.
\end{align}
On one hand, using similar bounds as above, we get
\begin{align}
\bigg\vert p^\perp_{\theta_{1,\frac{t}{\gd}}}\left(m_{1,\frac{t}{\gd}}-\gamma_{\theta_{1,\frac{t}{\gd}}}\right)-&p^\perp_{\theta_{3,\frac{t}{\gd}}}\left(m_{3,\frac{t}{\gd}}-\gamma_{\theta_{3,\frac{t}{\gd}}}\right)\bigg\vert \nonumber\\
& \leq \kappa_8 \gd^\frac32 \left\vert \theta_{1,\frac{t}{\gd}}-\theta_{3,\frac{t}{\gd}}\right\vert \nonumber\\
& \leq C_\Pi C_\proj \kappa_3 \kappa_8 \gd^{\frac12-\beta} \, W_\theta(f_2(\theta_{2,0}),f_1(\theta_{2,0})).
\end{align}
On the other hand, either $\left\vert \theta_{3,\frac{t}{\gd}}-\theta_{2,\frac{t}{\gd}}\right\vert\geq \gd^{ - \frac{ 1}{ 2}}$, and in this case relying on Lemma~\ref{lem:close to q0} we have
\begin{align}
\bigg\vert p^\perp_{\theta_{3,\frac{t}{\gd}}}&\left(m_{3,\frac{t}{\gd}}-\gamma_{\theta_{3,\frac{t}{\gd}}}\right)- p^\perp_{\theta_{2,\frac{t}{\gd}}}\left(m_{2,\frac{t}{\gd}}-\gamma_{\theta_{2,\frac{t}{\gd}}}\right)\bigg\vert \nonumber\\
& \leq 2 \kappa_2 \gd \nonumber\\
& \leq 2\kappa_2 \gd^\frac32 \left\vert \theta_{3,\frac{t}{\gd}}-\theta_{2,\frac{t}{\gd}}\right\vert \nonumber\\
&\leq 2 C_\Pi C_\proj \kappa_2 \kappa_3 \gd^{\frac12 -\beta}  W_\theta(f_2(\theta_{2,0}),f_1(\theta_{2,0})),
\end{align}
or $\left\vert \theta_{3,\frac{t}{\gd}}-\theta_{2,\frac{t}{\gd}}\right\vert\leq \gd^{ - \frac{ 1}{ 2}}$, and
with similar arguments as used in the proofs of Lemma~\ref{lem:bound delta bar X delta bar Y} and Lemma~\ref{lem:encadre}, we have
\begin{align}
\bigg\vert p^\perp_{\theta_{3,\frac{t}{\gd}}}&\left(m_{3,\frac{t}{\gd}}-\gamma_{\theta_{3,\frac{t}{\gd}}}\right)- p^\perp_{\theta_{2,\frac{t}{\gd}}}\left(m_{2,\frac{t}{\gd}}-\gamma_{\theta_{2,\frac{t}{\gd}}}\right)\bigg\vert \nonumber\\
& =\left\vert p^\perp_{\theta_{2,0}+t}\left(m_{3,\frac{t}{\gd}}-m_{2,\frac{t}{\gd}}\right)\right\vert+O\left(\gd \left\vert \theta_{3,\frac{t}{\gd}}-\theta_{2,\frac{t}{\gd}}\right\vert \right)+O\left(\left\vert \theta_{3,\frac{t}{\gd}}-\theta_{2,\frac{t}{\gd}}\right\vert^2 \right)\nonumber \\
& = \left\vert p^\perp_{\theta_{2,0}+t}\left(m_{3,\frac{t}{\gd}}-m_{2,\frac{t}{\gd}}\right)\right\vert+O\left(\gd^\frac12 W(f_2(\theta_{2,0}),f_1(\theta_{2,0})) \right).
\end{align}
Relying on the decomposition \eqref{eq:decomp Delta m} and its following estimates (considering some coupling $ \left\lbrace \left( X_{ 3, 0}, Y_{ 3, 0}\right), \left( X_{ 2, 0}, Y_{ 2, 0}\right)\right\rbrace \sim \pi_{ 0}\in \mathcal{ C}(\mu_{ 3, 0}, \mu_{ 2, 0})$ of the initial condition such that $\Lambda(\pi_{ 0}) < W(\mu_{ 3,0},\mu_{ 2, 0}) + \varepsilon$),
\begin{align}
\bigg\vert p^\perp_{\theta_{2,0}+t}&\left(m_{3,\frac{t}{\gd}}-  m_{2,\frac{t}{\gd}}\right)\bigg\vert \nonumber\\
& \leq  e^{-\lambda t}\bigg\vert p^\perp_{\theta_{2,0}}(m_{3,0}-m_{2,0})\bigg\vert
+ C_\Pi(c_1+c_2+c_3)\kappa_3 t\gd^{1-\beta} \, W(f_1(\theta_{2,0}),f_2(\theta_{2,0})) \nonumber \\ 
& \qquad \qquad +C_\Pi c_4 \gd \int_0^{\frac{t}{\gd}}\left( \bbE\left[\left(\tilde X_{3,s}-\tilde X_{2,s}\right)^2\right]^\frac12 +b\bbE\left[\left(\tilde Y_{3,
s}-\tilde Y_{2,s}\right)^2\right]^\frac12\right)\dd s\nonumber\\
&\leq  \left(\gd^{-\beta}e^{-\gl t} + C\left(1+\gd^{1-\beta}\right)(1+\gep)\right) W_\theta(f_1(\theta_{2,0}),f_2(\theta_{2,0})) ,
\end{align}
for some positive constant $C$, where we have relied on Lemma~\ref{lem:bound delta bar X delta bar Y} to deal with the integral term.
Making $\gep$ going to $0$ and gathering these estimates we get for $\gd$ small enough:
\begin{equation}
\gd^\beta \left\vert p^\perp_\theta\left(m_{1,\frac{t}{\gd}}-m_{2,\frac{t}{\gd}}\right)\right\vert \leq \left(e^{-\gl t} +O\left(\gd^\frac12+\gd^\beta\right)\right)\dist_\cF(f_1,f_2),
\end{equation}
which concludes the proof.
\end{proof}

We now have all the ingredients needed to prove Theorem \ref{th:main}.

\begin{proof}[Proof of Theorem \ref{th:main}]
By Lemma \ref{lem:lipsch} we have $\Phi_T(\cF)\subset \cF$. Moreover, recalling \eqref{hyp T}, Lemma \ref{lem:X contracts} implies that $\Phi_T$ admits a unique fixed-point $f_0$ in $\cF$. The point is now to show that $\Phi_t(f_0)=f_0$ for all $t\geq 0$. It is clear that $\Phi_{kT}(f_0)=f_0$ for all $k\in \bbN$, by the semi-group property, and it remains then to prove that $\Phi_t(f_0)=f_0$ for $t\in(0,T)$. But, fixing $t\in (0,T)$, $\Phi_t(f_0)\in \cF$ by Lemma \ref{lem:lipsch} and we have $\Phi_{t} (f_0)=\Phi_t(\Phi_T (f_0))=\Phi_T(\Phi_t(f_0))$, so that $\Phi_t(f_0)=f_0$ by uniqueness of the fixed-point of $\Phi_T$ on $\cF$.

This proves that $\cC_\gd=\{f_0(\theta):\, \theta\in \bbS_\gd\}$ is a one-dimensional invariant manifold for \eqref{eq:PDER kinetic FhN}, and the estimates made in the proof of Lemma \ref{lem:close to q0}, showing that if $\mu_0=f_0(\theta_0)$ then $\theta_t=1+O(\gd)$, imply that $\cC_\gd$ defines in fact a periodic solution for \eqref{eq:PDER kinetic FhN}. Moreover from\eqref{def f close q0} and \eqref{def f close gamma} we deduce that for all $\theta\in \bbS_\gd$ we have $W(\tilde{f_0(\theta)},\tilde{q_0})\leq \kappa_1 \gd$, and $\left\vert \int_{\bbR^2}zf_0(\dd z)-\gamma_\theta\right\vert=O(\gd)$, which means that $W_2(f(\theta),q_{\gamma_\theta})=O(\gd)$.

\medskip

To prove the contraction property, first remark that, by equivalence of distances, if $\dist_{W_2}(\mu, \cC_\gs)\leq c\gd$ with $c$ small enough, then for $\theta_0=\proj(m_0)$ we have
$m_0\in E_{\theta_0}(\kappa_2\gd)$ and $W(\tilde \mu_0,q_0)\leq \kappa_1 \gd$, and for $f$ defined as
\begin{equation}
\tilde{f(\theta)}=\tilde \mu_0,\qquad \text{and} \qquad \int_{\bbR^2}z  f(\theta)(dz)=\gamma_\theta+N(\theta,\theta_0)\left(m_0-\gamma_{\theta_0} \right),
\end{equation}
we have $f\in \cF$. 
Now, on one hand, for $t\in[0,T]$, Lemma \ref{lem:bound delta bar X delta bar Y} implies that if $y^\gd_t$ is the periodic solution defining $\cC_\gd$, with $y^\gd_0=f_0(\theta_0)$, then $W(\mu_t,y^\gd_t)\leq \kappa_3 W(\mu_0,y^\gd_0)$, and thus for $\in[0,T]$:
\begin{equation}
\dist_{W_2}(\mu_t,\cC_\gd)\leq 3\kappa_3 \gd^{-\beta} W(\mu_0,f_0(\theta_0))\leq \frac{3\kappa_3}{C_\Pi}\gd^{-\beta} W_{\theta_0}(\mu_0,f_0(\theta_0)).
\end{equation}
On the other hand, since $\mu_t=\Phi_t(f)(\theta_t)$ and relying on the proof on Lemma \ref{lem:X contracts} and on a basic recursion, we get that for $t\geq T$:
\begin{equation}
\dist_{W_2}(\mu_t,\cC_\gd)\leq \frac{3\kappa_3}{C_\Pi}\gd^{-\beta}W_{\theta_t}(\Phi_t(f)(\theta_t),(f_0)(\theta_t))\leq \frac{3\kappa_3}{C_\Pi}\gd^{-\beta} e^{-\lambda(\theta) t}W_{\theta_0}(\mu_0,f_0(\theta_0)),
\end{equation}
where we have defined (using the bound $e^{-at}+xt\leq e^{-(a-xe^{at})t}$):
\begin{equation}
\lambda(\theta):=\min\left(\lambda,\frac{b}{c}\right)-\frac{\kappa_{10}e^{2T\min\left(\lambda,\frac{b}{c}\right)}}{T}(\gd^\beta+\gd^{1-\beta}).
\end{equation}
It remains thus to bound $W_{\theta_0}(\mu_0,f_0(\theta_0))$ by $\dist_{W_2}(\mu_0,\cC_\gd)$. To do this we show that for a constant $c$ we have $W_{\theta_0}(\mu_0,f(\theta_0)\leq c W(\mu_0,f_0(\theta))$ for all $\theta\in\bbS_\gd$. But we have on one hand, since $m_0\in E_{\theta_0}(\kappa_2\gd)$,
\begin{equation}
\left\vert p^\perp_{\theta_0}\left(m_0-\gamma_{\theta_0}\right)\right\vert\leq C_{\proj}\left\vert m_0-\gamma_{\theta_0}\right\vert\leq C_{\proj}\left\vert m_0-\gamma_{\theta}\right\vert+C_{\proj}\left\vert \gamma_{\theta}-\gamma_{\theta_0}\right\vert,
\end{equation}
and an application of Lemma \ref{lem:proj on gamma} leads to
\begin{equation}
\theta_0-\theta =\frac{1}{\gd}p_{\theta}\left(m_0-\gamma_{\theta}\right)+\frac{1}{\gd} O\left(\left\vert m_0-\gamma_{\theta}\right\vert^2\right).
\end{equation}
So, for a constant $C$ that does not depend on $\gd$,
$
\left\vert \gamma_{\theta}-\gamma_{\theta_0}\right\vert=\left\vert \gamma^1_{\gd\theta}-\gamma^1_{\gd\theta_0}\right\vert \leq C\left\vert m_0-\gamma_{\theta}\right\vert
$, and thus
\begin{equation}
\left\vert p^\perp_{\theta_0}\left(m_0-\gamma_{\theta_0}\right)\right\vert\leq C' \left\vert m_0-\gamma_{\theta}\right\vert,
\end{equation}
for some constant $C'$ that does not depend on $\gd$. On the other hand,
\begin{equation}
W\left(\tilde \mu_0,\tilde{f_0(\theta_0)}\right)\leq W\left(\tilde \mu_0,\tilde{f_0(\theta)}\right)+W\left(\tilde{f_0(\theta)},\tilde{f_0(\theta_0)}\right),
\end{equation}
and $W\left(\tilde{f_0(\theta)},\tilde{f_0(\theta_0)}\right)\leq \kappa_7 \gd^2\vert \theta-\theta_0\vert\leq C''\left\vert m_0-\gamma_{\theta}\right\vert $. Gathering all these estimates, we obtain $W_{\theta_0}(\mu_0,f(\theta_0)\leq c W(\mu_0,f_0(\theta))$, with $c=\max(C',1+C'')$.

Finally, for all $t\geq 0$ and $\theta\in \bbS_\gd$ we get:
\begin{equation}
\dist_{W_2}(\mu_t,\cC_\gd)\leq C(\gd) e^{-\lambda(\gd)t}W_2(\mu_0,f_0(\theta)),
\end{equation}
with $C(\gd)= \frac{3c}{C_\Pi^2\min(1,b^{-1})}\gd^{-\beta}\max\left(\kappa_3 e^{\lambda(\gd)T}, 1\right)$, which concludes the proof.
\end{proof}

\appendix

\section{Technical Lemma}

\begin{lemma}\label{lem:gronwall_sqrt}
Let $v$ be a continuously differentiable function on $[0, +\infty)$ such that $v(t)\geq0$ for all $t\geq0$. Suppose that there exists $ \alpha, \beta,  k>0$ and $\zeta\geq 0$ such that
\begin{equation}
v^{ \prime}(t)\leq - \alpha v(t) +\left(e^{-k t} \zeta +\beta\right) \sqrt{ v(t)}.
\end{equation}
Then, for all $t\geq 0$,
\begin{equation}\label{eq:gronwall_sqrt_1}
v(t) \leq \max \left(v(0),\ \frac{ (\zeta +\beta)^{ 2}}{ \alpha^{ 2}}\right), 
\end{equation}
and if $2k>\ga$ or $\zeta=0$, then
\begin{equation}\label{eq:gronwall_sqrt_2}
v(t) \leq \left(e^{ - \alpha t/2} \left(\sqrt{ v(0)}+\frac{\zeta}{2k-\ga }\right) +\frac{\beta}{\ga } \right)^{ 2}. 
\end{equation}
\end{lemma}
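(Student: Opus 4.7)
The natural approach is to linearize the differential inequality via the substitution $w(t):=\sqrt{v(t)}$. On any open interval where $v>0$, $w$ is $C^1$ with $2w(t)w'(t)=v'(t)$; dividing the hypothesis by $w(t)>0$ then yields
\begin{equation*}
w'(t) \leq -\frac{\alpha}{2}\, w(t) + \frac{1}{2}\bigl(e^{-kt}\zeta + \beta\bigr),
\end{equation*}
a linear differential inequality from which both conclusions will follow. The only subtlety is that $\sqrt{v}$ fails to be differentiable where $v=0$; to circumvent this I would regularize, working with $w_\epsilon(t):=\sqrt{v(t)+\epsilon}$ for $\epsilon>0$ small, which is everywhere $C^1$ and satisfies the same linear inequality up to an extra term bounded by $\alpha\epsilon/(2w_\epsilon(t))\leq \alpha\sqrt{\epsilon}/2$ that vanishes as $\epsilon\searrow 0$.

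For the first bound \eqref{eq:gronwall_sqrt_1} I would argue directly by continuity and contradiction, without linearization. Set $M:=\max(v(0),(\zeta+\beta)^2/\alpha^2)$, and for arbitrary $\epsilon>0$ let $\tau_\epsilon:=\inf\{t>0:v(t)>M+\epsilon\}$. If $\tau_\epsilon<+\infty$, continuity and $v(0)\leq M$ force $\tau_\epsilon>0$, $v(\tau_\epsilon)=M+\epsilon$ and $v'(\tau_\epsilon)\geq 0$. Yet, using $e^{-k\tau_\epsilon}\zeta\leq\zeta$, the hypothesis gives
\begin{equation*}
v'(\tau_\epsilon) \leq \sqrt{M+\epsilon}\,\bigl((\zeta+\beta) - \alpha\sqrt{M+\epsilon}\bigr) < 0,
\end{equation*}
since $\sqrt{M+\epsilon}>\sqrt{M}\geq(\zeta+\beta)/\alpha$. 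This contradiction shows $v(t)\leq M+\epsilon$ for all $t$; letting $\epsilon\searrow 0$ yields \eqref{eq:gronwall_sqrt_1}.

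For the second bound \eqref{eq:gronwall_sqrt_2} I would apply the variation-of-constants formula to the linear inequality (on $w_\epsilon$, then letting $\epsilon\searrow 0$), obtaining
\begin{equation*}
w(t) \leq e^{-\alpha t/2}\sqrt{v(0)} + \frac{\zeta}{2}\int_0^t e^{-\alpha(t-s)/2}e^{-ks}\,\dd s + \frac{\beta}{2}\int_0^t e^{-\alpha(t-s)/2}\,\dd s.
\end{equation*}
A direct computation bounds the $\beta$-integral by $\beta(1-e^{-\alpha t/2})/\alpha\leq \beta/\alpha$, and under $2k>\alpha$ (the case $\zeta=0$ being trivial) the $\zeta$-integral evaluates to $\zeta(e^{-\alpha t/2}-e^{-kt})/(2k-\alpha)\leq \zeta\, e^{-\alpha t/2}/(2k-\alpha)$. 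Squaring the resulting bound on $w(t)=\sqrt{v(t)}$ gives exactly \eqref{eq:gronwall_sqrt_2}.

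The main (and arguably only) obstacle is the possible non-differentiability of $\sqrt{v}$ at zeros of $v$, which the regularization $v+\epsilon$ handles cleanly; beyond that, both estimates reduce to elementary scalar calculus on the linear differential inequality satisfied by $w$.
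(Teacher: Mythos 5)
Your proof is correct. For \eqref{eq:gronwall_sqrt_1}, both you and the paper observe that $v'(t)<0$ whenever $\sqrt{v(t)}>(e^{-kt}\zeta+\beta)/\alpha$; you just make this explicit with a first-crossing-time/$\epsilon$-slack argument while the paper states it qualitatively. For \eqref{eq:gronwall_sqrt_2}, both arguments at heart linearize via $w=\sqrt{v}$, but you handle the non-differentiability of $\sqrt{v}$ at its zeros in a genuinely different way. The paper restricts to the maximal open interval $(t^-,t^+)$ where $v>0$, shows the integrating-factor expression $f(u)=\bigl(\alpha\sqrt{v(u)}+\tfrac{\alpha}{2k-\alpha}e^{-ku}\zeta-\beta\bigr)e^{\alpha u/2}$ is nonincreasing there, compares against the exact ODE solution with matching value at $t^-$, and then uses $v(t^-)=0$ by continuity when $t^->0$; the zero set is handled via this interval decomposition. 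You instead regularize to $w_\epsilon=\sqrt{v+\epsilon}$, which is globally $C^1$, derive the linear inequality up to an $O(\sqrt{\epsilon})$ forcing term, apply variation of constants globally, and let $\epsilon\searrow 0$. Your route is arguably cleaner and more modular (standard Grönwall after a one-line regularization), and the computation $\frac{\alpha\epsilon}{2w_\epsilon}\leq\frac{\alpha\sqrt\epsilon}{2}$ correctly controls the error term; the paper's route avoids any limit in $\epsilon$ at the cost of a more intricate interval-and-comparison bookkeeping. Both yield the same final bound after the identical integral evaluations, and your treatment of the case split $2k>\alpha$ versus $\zeta=0$ is also consistent with what the statement requires.
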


\begin{proof}[Proof of Lemma~\ref{lem:gronwall_sqrt}]
The first is a consequence of the fact that $v(t)$ is always non-increasing unless $ \sqrt{ v(t)}\leq \frac{ e^{-kt}\zeta +\beta}{ \alpha}\leq \frac{\zeta+\beta}{\ga}$. We now prove the second inequality: let $t\geq0$ such that $v(t)>0$ and consider the maximal interval $I:=(t^{ -}, t^{ +})$ (which is non empty by continuity of $v$) containing $t$ such that $v(u)>0$ on $I$. Consider the function $f(u):= \left( \alpha \sqrt{ v(u)} +\frac{\ga}{2k-\ga}e^{-ku}\zeta - \beta\right)e^{ \alpha u/2}$. For all $u\in I$ we have
\begin{equation}
f^{ \prime}(u) = \frac{ \alpha e^{ \alpha u/2}}{ 2 \sqrt{ v(u)}} \left( \alpha v(u) -\left(e^{-ku}\zeta+ \beta\right) \sqrt{ v(u)} + v^{ \prime}(u)\right)\leq 0,
\end{equation}
so that $f$ is nonincreasing on $I$. Consider now the solution $w$ of the equation $w^{ \prime}(t)= - \alpha w(t) + \left(e^{-kt}\zeta+\beta\right) \sqrt{ w(t)}$ such that $w(t^{ -})=v(t^{ -})$. Then, relying on the same calculations, $g:= u \mapsto \left( \alpha \sqrt{ w(u)}+\frac{\ga}{2k-\ga}e^{-ku}\zeta - \beta\right)e^{ \alpha u/2}$ is constant on $I$, and equal to $g(t^{ -})$. This means that $f(u) \leq f(t^{ -})= g(t^{ -})= g(u)$ and, by definition of $f$ and $g$, this implies that
\begin{multline}
v(u) \leq w(u)=   \left(\frac{e^{ - \alpha u/2}}{\ga} g(t^{ -}) -\frac{e^{-ku}}{2k-\ga}\zeta+ \frac{\beta}{\ga} \right)^{ 2}\\
= \left(e^{-\ga(u-t^-)/2}\sqrt{v(t^-)}+\frac{e^{-\ga(u-t^-)/2}e^{-kt^-}-e^{-ku}}{2k-\ga}\zeta+ \left(1-e^{ \alpha (u-t^-)/2}\right)\frac{\beta}{\ga} \right)^{ 2}.
\end{multline}
We have now two possibilities: either $t^{ -}=0$ and then
\begin{equation}
v(u) \leq \left( e^{ - \alpha u/2}\sqrt{ v(0)} +\frac{e^{-\ga u/2}-e^{-ku}}{2k-\ga}\zeta+\left(1-e^{ -\alpha u/2}\right)\frac{\beta}{\ga}\right)^{ 2},
\end{equation}
or $t^{ -}>0$, and by continuity of $v$ we have $v(t^{ -})=0$. In this case, recalling that $2k>\ga$ or $\zeta=0$, we obtain
\begin{equation}
v(u) \leq   \left(\frac{e^{-\ga u/2}}{2k-\ga}\zeta+\frac{\beta}{\ga}\right)^{ 2}.
\end{equation}
This proves Lemma~\ref{lem:gronwall_sqrt}.
\end{proof}

\section*{Acknowledgements}

C. Poquet benefited from the support of the ANR-17-CE40-0030 (Entropy, Flows, Inequalities).

\end{document}